\newcommand{\udots}{\mathinner{\mskip1mu\raise1pt\vbox{\kern7pt\hbox{.}}
\mskip2mu\raise4pt\hbox{.}\mskip2mu\raise7pt\hbox{.}\mskip1mu}}
\newcommand{\SA}{{\mathcal{A}}}
\newcommand{\SD}{{\mathcal{D}}}
\newcommand{\SE}{{\mathcal{E}}}
\newcommand{\SH}{{\mathcal{H}}}
\newcommand{\SM}{{\mathcal{M}}}
\newcommand{\SO}{{\mathcal{O}}}
\newcommand{\SP}{{\mathcal{P}}}
\newcommand{\ST}{{\mathcal{T}}}
\newcommand{\SU}{{\mathcal{U}}}
\newcommand{\SV}{{\mathcal{V}}}
\newcommand{\SW}{{\mathcal{W}}}
\newcommand{\SZ}{{\mathcal{Z}}}
\newcommand{\ZZ}{\mathbb{Z}}
\newcommand{\codim}{\operatorname{codim}}
\newcommand{\Pic}{\operatorname{Pic}}
\newcommand{\id}{\operatorname{Id}}
\newcommand{\Aut}{\operatorname{Aut}}
\newcommand{\rk}{\operatorname{rk}}
\newcommand{\pdeg}{\operatorname{pardeg}}
\newcommand{\wt}{\widetilde}
\newcommand{\op}{\operatorname}
\newtheorem{proposition}{Proposition}[section]
\newtheorem{theorem}[proposition]{Theorem}
\newtheorem{definition}[proposition]{Definition}
\newtheorem{lemma}[proposition]{Lemma}
\newtheorem{corollary}[proposition]{Corollary}
\newtheorem{remark}[proposition]{Remark}
\numberwithin{equation}{section}
\title[Automorphisms moduli of parabolic bundles with fixed degree]{Automorphism group of the moduli space of parabolic vector bundles with fixed degree}
\author[D. Alfaya]{David Alfaya}
\date{}
\address{D. Alfaya, 
\newline\indent
Department of Applied Mathematics and Institute for Research in Technology, ICAI School of Engineering, Comillas Pontifical University, C/Alberto Aguilera 25, 28015 Madrid, Spain}
\email{dalfaya@comillas.edu}
\keywords{Parabolic vector bundle, moduli space, autormorphism group, extended Torelli theorem, birational geometry, stability chambers}
\subjclass[2020]{14C34, 14D20, 14E07, 14E05, 14H60}
\begin{document}

\begin{abstract}
We find all possible isomorphisms and $3$-birational maps (i.e., birational maps which induce an isomorphism between open subsets whose respective complements have codimension at least $3$) between moduli spaces of parabolic vector bundles with fixed degree. We prove that every $3$-birational map can be described as a composition of tensorization by a fixed line bundle, Hecke transformations, dualization, taking pullback by an isomorphism between the curves and the action of the group of automorphisms of the Jacobian variety of the curve which fix the $r$-torsion. In particular, we prove a Torelli type theorem, stating that the $3$-birational class of the moduli space determines the isomorphism class of the curve.
\end{abstract}

\maketitle

\section{Introduction}

Let $X$ be a smooth complex projective algebraic curve. The classical Torelli theorem states the isomorphism class of $J(X)$ as a polarized variety identifies unequivocally the isomorphism class of the curve $X$. There are several generalizations of this result to higher rank \cite{MN68, Tyu70, NR75, KP95, HR04, BGM13}, proving that the moduli space $\SM(X,r,\xi)$ of vector bundles of rank $r$ with fixed determinant $\xi$,  or the moduli space $\SM(X,r,d)$ of vector bundles of rank $r$ with fixed degree $d$ identify the isomorphism class of $X$. Moreover, in \cite{KP95} the automorphisms of $\SM(X,r,\xi)$ and $\SM(X,r,d)$ were computed and, later on, \cite{HR04} and \cite{BGM13} gave different proofs of these results in the fixed determinant case.

Recently, some of these results have been extended to moduli spaces of parabolic bundles with fixed determinant. Let us assume that the curve $X$ has a finite set of marked points $D\subset X$, and let us consider the moduli space $\SM(X,D,r,\alpha,\xi)$ of parabolic vector bundles on $(X,D)$ of rank $r$, system of weights $\alpha$ and fixed determinant $\xi$. In \cite{BBB01}, Balaji, del Baño and Biswas proved a Torelli type result for this moduli space under the conditions that $r=2$, $\deg(\xi)$ is even and $\alpha$ is small, stating that the isomorphism class of the pair $(X,D)$ can be recovered from the isomorphism class of $\SM(X,D,2,\alpha,\xi)$. This result was generalized in \cite{AG19} to arbitrary rank, degree and generic parabolic weights. \cite{AG19} also provided an explicit description of all possible isomorphisms $\SM(X,D,r,\alpha,\xi)\stackrel{\sim}{\longrightarrow} \SM(X',D',r',\alpha',\xi')$ between different moduli spaces of parabolic vector bundles with fixed determinant built from possibly different parameters. In particular, the automorphism group of the moduli space was computed, generalizing \cite{KP95} to moduli spaces of parabolic bundles with fixed determinant. More precisely, it was proven that all possible isomorphisms are given by suitable compositions of the following basic transformations
\begin{itemize}
\item Taking the pullback with respect to an isomorphism $\sigma:X'\longrightarrow X$, such that $\sigma(D')=D$,
\item tensoring with a line bundle,
\item taking the dual parabolic bundle, and 
\item applying a Hecke transformation to the parabolic vector bundle at a parabolic point.
\end{itemize}

Moreover, \cite{AG19} also provided a $3$-birational version of these results, classifying all possible $3$-birational maps $\SM(X,D,r,\alpha,\xi)\dashrightarrow \SM(X',D',r',\alpha',\xi')$. By $3$-birational map we mean the following. Given two algebraic varieties $\SM,\SM'$ and an integer $k>0$, a $k$-birational map between $\SM$ and $\SM'$ is defined as an isomorphism $\Phi:\SU\longrightarrow \SU'$ between open subsets $\SU\subset \SM$ and $\SU'\subset \SM'$ such that their respective complements have codimension at least $k$, i.e., such that
$$\codim(\SM\backslash \SU , \SM)\ge k, \quad \quad \codim(\SM'\backslash \SU',\SM')\ge k.$$
We say that $\SM$ and $\SM'$ are $k$-birational if there exists a $k$-birational map between them. In particular, if $\SM$ and $\SM'$ are connected, then they are birational if and only if they are $1$-birational.

The results from \cite{BBB01} and \cite{AG19} are restricted to moduli spaces of parabolic bundles with fixed determinant. The objective of this work is to extend these results to the moduli space $\SM(X,D,r,\alpha,d)$ of $\alpha$-stable parabolic vector bundles with fixed degree $d$ and rank $r$, classifying all possible isomorphisms between different moduli spaces of this type.

In this case, there exists an additional family of transformations which induce automorphisms of the moduli space. Given an automorphism of the Jacobian of $X$, $\rho\in \Aut(J(X))$ which acts as the identity on the $r$-torsion points $J(X)[r]$, and a fixed line bundle $\xi\in \Pic^d(X)$, we can build an automorphism $\SA^\xi_\rho:\SM(X,D,r,\alpha,d)\stackrel{\sim}{\longrightarrow} \SM(X,D,r,\alpha,d)$ which satisfies the following. If $(E,E_\bullet)\in \SM(X,D,r,\alpha,\xi)$ and $L\in J(X)$, then
$$\SA^\xi_\rho((E,E_\bullet)\otimes L)=(E,E_\bullet)\otimes \rho(L)$$

In \cite{AG19}, it is observed that in the study of maps between moduli spaces of parabolic vector bundles it is more natural to work directly in the framework of $k$-birational geometry, as it allows the usage of more flexible geometric strategies and, moreover, it simplifies significantly the additional problems arising from the existence of stability parameters. This paradigm has also been successfully applied in the analysis of other moduli spaces of decorated bundles, like framed bundles \cite{AB21}. In this paper we will follow the same approach, building the necessary technical lemmata from a $k$-birational perspective and then extending the results to globally defined isomorphisms, thus obtaining at the same time results in the categories of $3$-birational maps and algebraic isomorphisms. This turns out to be crucial for the proof. Even if we only worked with isomorphisms, some important lemmata (like Lemma \ref{lemma:JacobianOrbit} and Lemma \ref{lemma:JacobianOrbitsFixed}) would still rely on a birational approach to succeed.

More precisely, we obtain the following $3$-birational Torelli theorem (Theorem \ref{thm:TorelliBir}).

\begin{theorem}
Let $(X,D)$ and $(X',D')$ be two marked smooth complex projective curves of genus at least $4$ endowed with full flag systems of weights $\alpha$ and $\alpha'$ respectively. Then $\SM(X,D,r,\alpha,d)$ and $\SM(X',D',r',\alpha',d')$ are $3$-birational if and only if
\begin{enumerate}
\item $r=r'$ and ,
\item There exits an isomorphism $\sigma:X\longrightarrow X'$ such that $\sigma(D)=D'$
\end{enumerate}
\end{theorem}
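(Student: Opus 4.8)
The plan is to reduce the statement to the fixed-determinant case of \cite{AG19} by means of the determinant fibration $\det\colon\SM(X,D,r,\alpha,d)\to\Pic^d(X)$, whose generic fibre is the fixed-determinant moduli space $\SM(X,D,r,\alpha,\xi)$. For the ``only if'' direction I would first show that this fibration, together with the abelian variety $\Pic^d(X)\cong J(X)$ at its base, is an invariant of the $3$-birational class of the total space, so that a $3$-birational map carries generic fibres to generic fibres and restricts to a $3$-birational map between them; applying the fixed-determinant Torelli theorem of \cite{AG19} to the fibres then yields $r=r'$ and the isomorphism $\sigma$. For the ``if'' direction I would exhibit an explicit $3$-birational map as a composition of a pullback along $\sigma$, Hecke transformations at the marked points, tensorisations by line bundles, and a wall-crossing across the relevant stability chambers in the space of parabolic weights.

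In more detail, for $\Rightarrow$ I would use that, since $\alpha$ and $\alpha'$ are generic full flag systems, $\SM(X,D,r,\alpha,d)$ and $\SM(X',D',r',\alpha',d')$ are smooth and projective, so that $H^1(-,\SO)$ and $H^1(-,\Omega^1)$ are unchanged by removing a closed subset of codimension $\ge 3$; hence a $3$-birational map induces an isomorphism of Albanese varieties and commutes with the Albanese morphisms on the common open locus. The Albanese morphism of $\SM(X,D,r,\alpha,d)$ is the determinant map up to translation, its fibres being rationally connected, so the induced isomorphism $J(X)\cong J(X')$ matches $\det$ with $\det'$ over dense open subsets; making this precise is exactly what the birational analysis of the $J(X)$-orbits in Lemma \ref{lemma:JacobianOrbit} and Lemma \ref{lemma:JacobianOrbitsFixed} is designed for. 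For generic $\xi$ the fibre $\det^{-1}(\xi)=\SM(X,D,r,\alpha,\xi)$ meets the big open subset in a set whose complement has codimension $\ge 3$ inside the fibre (by genericity of $\xi$ and a dimension count), so the $3$-birational map restricts to a $3$-birational map $\SM(X,D,r,\alpha,\xi)\dashrightarrow\SM(X',D',r',\alpha',\xi')$; the $3$-birational fixed-determinant Torelli theorem of \cite{AG19} then gives $r=r'$ and an isomorphism $\sigma\colon X\to X'$ with $\sigma(D)=D'$.

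For $\Leftarrow$, assuming $r=r'$ and such a $\sigma$, the pullback $\sigma^{*}$ is an isomorphism $\SM(X',D',r,\alpha',d')\xrightarrow{\,\sim\,}\SM(X,D,r,\sigma^{*}\alpha',d')$; an elementary Hecke transformation at a point of $D$ is a $3$-birational map that changes the degree by $1$ and permutes the flag data and the weights in a controlled way, so I would compose finitely many of them, together with a tensorisation by a line bundle adjusting the degree by a multiple of $r$, to obtain a $3$-birational map onto $\SM(X,D,r,\beta,d)$ for some generic full flag system $\beta$; finally I would join $\beta$ to $\alpha$ by a generic path in the space of parabolic weights and cross the finitely many walls it meets, each wall-crossing being a $3$-birational map because its flip locus has codimension $\ge 3$ when the genus is $\ge 4$. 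Since a composition of $3$-birational maps is again $3$-birational, the two moduli spaces are $3$-birational, completing the equivalence.

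The step I expect to be the main obstacle is the codimension estimate for the wall-crossing: one must verify that, for full flag weights and genus $\ge 4$, the locus where stability disagrees across a wall has codimension at least $3$ in each of the two moduli spaces, which is precisely where the genus hypothesis enters, and one must keep careful track of the bookkeeping of flags and weights under Hecke transformations. On the converse side the delicate point is the intrinsic recognition of the determinant fibration --- that the Albanese morphism is genuinely the determinant map and that the $3$-birational map is fibred over the induced isomorphism of Jacobians, and not merely compatible with it on Albanese varieties --- which is why the birational study of the $J(X)$-orbits cannot be dispensed with even if one only wanted a statement about honest isomorphisms.
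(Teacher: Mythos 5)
Your proposal follows essentially the same route as the paper: the ``only if'' direction recovers the determinant fibration from the rationality (hence rational connectedness) of its fibres so that a generic fibre maps $3$-birationally to a generic fibre, and then invokes the fixed-determinant $3$-birational Torelli theorem of \cite{AG19}, exactly as in Lemma \ref{lemma:recoverDeterminant}, Corollary \ref{cor:openSubsets} and the proof of Theorem \ref{thm:TorelliBir}; the ``if'' direction is the paper's Lemma \ref{lemma:reciprocalTorelli} (Hecke plus tensorisation to fix the degree, then the codimension-$\ge 3$ comparison of stability chambers from \cite[Lemma 2.3]{AG19}), with your Albanese phrasing being a cosmetic variant of the paper's direct argument that rational fibres admit no nonconstant maps to an abelian variety. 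One small correction: Lemmas \ref{lemma:JacobianOrbit} and \ref{lemma:JacobianOrbitsFixed} are not needed for the Torelli statement --- they enter only in the finer classification of Section \ref{section:automorphisms} --- and the genus hypothesis enters through the cited results of \cite{AG19}, not through a wall-by-wall crossing argument.
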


observe that, in this case, the Torelli Theorem admits a reciprocal, i.e., there is a bijective correspondence between $3$-birational classes of moduli spaces of parabolic bundles with fixed degree and pairs consisting on an isomorphism class of the a marked curve $(X,D)$ and a rank $r$. As a particular case, we can deduce the usual Torelli theorem (Corollary \ref{cor:Torelli}).

\begin{theorem}
Let $(X,D)$ and $(X',D')$ be two marked smooth complex projective curves of genus at least $4$ endowed with full flag systems of weights $\alpha$ and $\alpha'$ respectively. If $\SM(X,D,r,\alpha,d) \cong \SM(X',D',r',\alpha',d')$ then $r=r'$ and $(X,D)\cong (X',D')$.
\end{theorem}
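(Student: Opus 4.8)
The plan is to derive this statement as an immediate consequence of the $3$-birational Torelli theorem (Theorem~\ref{thm:TorelliBir}), by invoking the elementary fact that every isomorphism of algebraic varieties is in particular a $3$-birational map.

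Concretely, suppose $\Phi : \SM(X,D,r,\alpha,d) \stackrel{\sim}{\longrightarrow} \SM(X',D',r',\alpha',d')$ is an isomorphism of algebraic varieties. Taking the open subsets $\SU = \SM(X,D,r,\alpha,d)$ and $\SU' = \SM(X',D',r',\alpha',d')$ to be the whole moduli spaces, their complements are empty and therefore have codimension $\geq 3$ (indeed codimension $\geq k$ for every $k$, under the standard convention assigning infinite codimension to the empty subscheme). Hence $\Phi$ realizes $\SM(X,D,r,\alpha,d)$ and $\SM(X',D',r',\alpha',d')$ as $3$-birational. Since $(X,D)$ and $(X',D')$ are marked smooth complex projective curves of genus at least $4$ equipped with full flag systems of weights, the hypotheses of Theorem~\ref{thm:TorelliBir} are met, and the theorem yields $r = r'$ together with an isomorphism $\sigma : X \longrightarrow X'$ with $\sigma(D) = D'$. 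The latter condition is exactly the assertion $(X,D) \cong (X',D')$, so the statement follows.

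I do not expect any genuine obstacle in this argument: all of the substance has already been established in the course of proving Theorem~\ref{thm:TorelliBir}, and the present statement is simply its restriction from the category of $3$-birational maps to that of algebraic isomorphisms. The only bookkeeping point is to fix the convention under which an isomorphism qualifies as a $k$-birational map for all $k$ — equivalently, to agree that $\codim(\emptyset, \SM) = +\infty$ — which is entirely harmless.
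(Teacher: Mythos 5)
Your proposal is correct and is exactly how the paper obtains this statement: it is presented as an immediate corollary of Theorem~\ref{thm:TorelliBir}, since any global isomorphism is in particular a $3$-birational map (the complements being empty). The only implicit point, covered by the paper's standing convention from Section~\ref{section:preliminaries}, is that the weight systems are assumed generic so that the hypotheses of Theorem~\ref{thm:TorelliBir} apply.
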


Moreover, as the main result of this paper, we refine this result to obtain a full explicit description of all possible $3$-birational maps and isomorphisms between moduli spaces of parabolic bundles with fixed degree built with possibly different invariants (Theorem \ref{thm:3birMaps} and Corollary \ref{cor:maps}).
\begin{theorem}
If $(X,D)$ and $(X',D')$ are smooth complex projective curves of genus at least $6$ with full flag generic systems of weights $\alpha$ and $\alpha'$. If $\Phi:\SM(X,D,r,\alpha,d) \dashrightarrow \SM(X',D',r',\alpha',d')$ is a $3$-birational map then $r=r'$ and there exist
\begin{itemize}
\item an isomorphism $\sigma:(X,D)\cong (X',D')$,
\item a line bundle $\xi\in \Pic^d(X)$, and
\item an automorphism $\rho\in \Aut(J(X))$ which is the identity on the $r$-torsion $J(X)[r]$
\end{itemize}
such that for each $(E,E_\bullet)\in \SM(X,D,r,\alpha,d)$ where $\Phi$ is defined $\sigma^*(E,E_\bullet)$ is obtained from $(E,E_\bullet)$ as a composition of the automorphism $\SA_\rho^\xi$ and a basic transformation $T$, i.e., a suitable composition of
\begin{itemize}
\item tensorization with a line bundle $L\in \Pic(X)$,
\item performing Hecke transformations at the parabolic points using the parabolic filtration, and
\item if $r>2$, taking the parabolic dual,
\end{itemize}
which sends bundles of degree $d$ to bundles of degree $d'$. Moreover, if $\Phi$ extends to an isomorphism, then the transformation $T$ is a correspondence between $\alpha$-stable parabolic vector bundles and $\sigma^*\alpha'$-stable parabolic vector bundles.
\end{theorem}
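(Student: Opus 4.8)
The plan is to reduce the statement to the fixed-determinant classification of \cite{AG19} by recovering the determinant fibration intrinsically. First I would compute the Albanese of $\SM=\SM(X,D,r,\alpha,d)$. The determinant morphism $\det\colon\SM\to\Pic^d(X)$ is smooth with fibres the fixed-determinant moduli spaces $\SM(X,D,r,\alpha,\xi)$, which are unirational and in particular have $h^1(\SO)=0$; hence $\det$ induces an isomorphism $H^1(\Pic^d(X),\SO)\xrightarrow{\sim}H^1(\SM,\SO)$ and, together with $\pi_1(\SM)\cong\pi_1(\Pic^d(X))$, identifies the Albanese variety of $\SM$ with $J(X)$ and the Albanese morphism with $\det$ up to a translation. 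A $3$-birational map $\Phi\colon\SM\dashrightarrow\SM'$ is in particular $2$-birational, so — rational maps to abelian varieties being morphisms on smooth varieties — it induces an isomorphism $\psi\colon J(X)\xrightarrow{\sim}J(X')$ of abelian varieties compatible with $\det$ and $\det'$; in particular $\Phi$ respects the determinant fibrations.

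Second, I would restrict $\Phi$ to a general fibre of $\det$. Since the locus where $\Phi$ is not an isomorphism has codimension $\ge3$ in $\SM$ and a general fibre is not contained in it, this yields a $3$-birational map $\SM(X,D,r,\alpha,\xi)\dashrightarrow\SM(X',D',r',\alpha',\psi(\xi))$ for general $\xi$. The fixed-determinant classification of \cite{AG19} then gives $r=r'$, an isomorphism $\sigma\colon(X,D)\xrightarrow{\sim}(X',D')$, and that the fibrewise map is a basic transformation $T$ — a composition of $\sigma^*$, a tensorization, Hecke transformations at the parabolic points and, if $r>2$, the parabolic dual. Using $\sigma$ I would henceforth identify $(X',D')$ with $(X,D)$, so that $\sigma^*\circ\Phi$ and $T$ become self-maps over the same curve.

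Third, I would compare the induced actions on $J(X)$. A basic transformation $T$ induces an affine automorphism $L\mapsto\varepsilon L+c$ of $J(X)$, with $\varepsilon=+1$ for tensorizations and Hecke transformations and $\varepsilon=-1$ for the parabolic dual, and $c$ determined by the degree-shifting ingredients; absorbing the translation into the choice of $\xi$, the composite $\rho$ of $\psi$ with the inverse of this affine map is an element of $\Aut(J(X))$, and $\sigma^*\circ\Phi=\SA^\xi_\rho\circ T$ on a dense open set. It remains to show $\rho|_{J(X)[r]}=\id$. For this I would use that the orbits of the tensorization action of $J(X)$ on $\SM$ are intrinsically characterized (Lemma \ref{lemma:JacobianOrbit}) and that $T$ intertwines this action through $\varepsilon$, so the self-map $\Psi:=\SA^\xi_\rho=(\sigma^*\circ\Phi)\circ T^{-1}$ satisfies $\Psi\circ\tau_L=\tau_{\rho(L)}\circ\Psi$, where $\tau_L$ denotes tensorization by $L\in J(X)$, and is the identity on $\det^{-1}(\xi)$. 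Every fibre $\det^{-1}(y)$ equals $\tau_L(\det^{-1}(\xi))$ for any $L$ with $L^{\otimes r}\cong\xi^{-1}\otimes y$; since $\Psi$ is single-valued, comparing two such choices of $L$ differing by $t\in J(X)[r]$ forces $\tau_{\rho(t)-t}$ to fix a general point of that fibre, hence $\rho(t)=t$. This is the content of Lemma \ref{lemma:JacobianOrbitsFixed}. Conversely, once $\rho$ is the identity on $J(X)[r]$ the prescription defining $\SA^\xi_\rho$ is single-valued, so $\SA^\xi_\rho$ is a genuine automorphism of $\SM$ and $\sigma^*\circ\Phi=\SA^\xi_\rho\circ T$ everywhere; the case $r=2$ is absorbed by noting that there the parabolic dual is itself a tensorization. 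Finally, if $\Phi$ extends to an isomorphism then, $\SA^\xi_\rho$ being an automorphism, $T=(\SA^\xi_\rho)^{-1}\circ\sigma^*\circ\Phi$ is an isomorphism, which forces the basic transformation $T$ to carry $\alpha$-stable parabolic bundles to $\sigma^*\alpha'$-stable ones and conversely.

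The main obstacle is the step in which the determinant fibration — equivalently the tensorization action of $J(X)$ — is shown to be recovered from the $3$-birational type and carried by $\Phi$ to the corresponding structure on the target. A birational modification can in principle destroy the orbits of a group action, so one cannot simply transport the $J(X)$-action; the Albanese argument handles the fibration, but the precise intertwining $\Psi\tau_L=\tau_{\rho(L)}\Psi$ and, with it, the $r$-torsion constraint require the birational analysis of Jacobian orbits in Lemmas \ref{lemma:JacobianOrbit} and \ref{lemma:JacobianOrbitsFixed}. Everything else is bookkeeping: tracking codimensions so that restriction to fibres and composition preserve the $3$-birational property, and tracking how degrees and determinants transform under the basic operations. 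The hypothesis that the genus be at least $6$ enters through these codimension estimates and through the range of validity of the fixed-determinant results of \cite{AG19}.
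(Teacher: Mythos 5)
Your proposal follows essentially the same route as the paper: recover the determinant fibration, restrict to general fibres and invoke the fixed-determinant classification of \cite{AG19}, then use the birational analysis of $J(X)$-orbits to obtain the intertwining relation and the $r$-torsion constraint. The one cosmetic difference is in the first step: you recover $\det$ as the Albanese morphism of $\SM(r,\alpha,d)$ using $h^1(\SO)=0$ for the (uni)rational fibres and functoriality of the Albanese under birational maps of smooth projective varieties, whereas the paper argues directly that each rational fibre of $\det$ must be contracted by $\det'\circ\Phi$ since a rational variety admits no nonconstant map to an abelian variety. These are two phrasings of the same fact, and both correctly reduce to the extension theorem for rational maps into abelian varieties. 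Your treatment of the $r$-torsion condition (comparing the two descriptions of $\Psi$ on $\det^{-1}(y)$ coming from $L$ and $L\otimes t$, and using that a generic bundle is not fixed by a nontrivial tensorization) is exactly the computation in the paper's Lemma \ref{lemma:recoverAction}, and you correctly locate the hard content in Lemmas \ref{lemma:JacobianOrbit} and \ref{lemma:JacobianOrbitsFixed}.

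There is, however, one step you assert rather than prove, and it is not automatic: the passage from ``$\sigma^*\circ\Phi=T\circ\SA^\xi_\rho$ on a dense open subset'' to ``for each $(E,E_\bullet)$ where $\Phi$ is defined.'' The obstacle is that $T$ changes the stability parameter, so $T\circ\SA^\xi_\rho$ is only a priori a morphism into $\SM(r,\alpha,d)$ on the locus $\SV$ of bundles that are simultaneously $\alpha$-stable and $T^{-1}(\alpha)$-stable; separatedness of the target therefore only yields agreement on $\SV\cap\SU$, not on all of $\SU$. On $\SU\setminus\SV$ one must still show that $T(\SA^\xi_\rho(E,E_\bullet))$ (which is always defined as a quasi-parabolic bundle) is in fact $\alpha$-stable and isomorphic to $\Phi(E,E_\bullet)$. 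The paper handles this by pulling back a universal family along $\Phi$, applying $T\circ\SA^\xi_\rho$ to the universal family over all of $\SU$, and invoking the uniqueness of extensions of families across closed subsets of codimension at least $3$ (\cite[Lemma 2.8]{AG19}); this is where the $3$-birational hypothesis (as opposed to mere birationality) is used a second time. Your sentence ``so $\SA^\xi_\rho$ is a genuine automorphism of $\SM$ and $\sigma^*\circ\Phi=\SA^\xi_\rho\circ T$ everywhere'' does not address this, since the globally defined factor is $\SA^\xi_\rho$, not $T$. The same extension argument is also what underlies the final claim that, when $\Phi$ is an isomorphism, $T$ matches the stability chambers $\alpha$ and $\sigma^*\alpha'$.
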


In particular, this description enables the computation of the groups of automorphisms and $3$-birational automorphisms of the moduli space (Theorem \ref{thm:automorphisms}). The idea of the proof is the following. We start by using the rationality of the moduli space with fixed determinant to recover the determinant morphism. Then the classification of $3$-birational maps of \cite{AG19} allows us to control the possible maps induced between the fibers of the determinant, reducing the problem to determining all ways in which ``basic transformations'' (pullback, tensorization, dualization and Hecke) can be glued along $\Pic^d(X)$ to produce a $3$-birational map on the whole moduli space. In order to do so, we study birationally the orbits of the action of $J(X)$ on generic points of the moduli. By using again the classification result from \cite{AG19} we can bound the possible images of such orbits, providing rigidity results for the structure of the global automorphisms and showing that, once restricted to a suitable open subset of the moduli space, they must be of the form described in the Theorem (composition of $\SA_\rho^\xi$ and a ``basic transformation''). Then we prove an extension result, showing that the obtained transformation actually agrees with $\Phi$ wherever it is defined and, if $\Phi$ is an isomorphism, providing an equality between the involved stability chambers.

This paper is structured as follows. Section \ref{section:preliminaries} introduces the basic notation and properties of the moduli spaces of parabolic vector bundles. Then, in Section \ref{section:basicTransformations} we introduce the group of ``basic transformations'' and some properties of its action on the moduli space. In Section \ref{section:Torelli} the Torelli theorem and $3$-birational Torelli theorem for the moduli space of parabolic vector bundles with fixed degree are proven. The action of the automorphisms of the Jacobian of the curve which fix the $r$ torsion on $\SM(X,D,r,\alpha,d)$ is described in Section \ref{section:action} and, through Section \ref{section:automorphisms}, we prove that this action, together with the ``basic transformations'', generates all isomorphisms and $3$-birational maps between moduli spaces of parabolic vector bundles with fixed degree. Finally, at the end of the section, a presentation of the automorphism group and $3$-birational automorphism group of the moduli space is provided.

\noindent\textbf{Acknowledgments.} 
This research was funded by MICINN (grants MTM2016-79400-P, PID2019-108936GB-C21 and ``Severo Ochoa Programme for Centres of Excellence in R\&D'' SEV-2015-0554). The author was also supported by a postdoctoral grant from ICMAT Severo Ochoa project. I would like to tank Tomás Gómez for helpful discussions.

\section{Moduli spaces of parabolic vector bundles}
\label{section:preliminaries}
Let $X$ be a smooth complex projective curve. Let $D\subset X$ be a finite set of different points. A parabolic vector bundle on $(X,D)$ is a vector bundle $E$ on $X$ together with a weighted flag on the fiber $E|_x$ over each $x\in D$
$$E|_x=E_{x,1} \supsetneq E_{x,2} \supsetneq \cdots \supsetneq E_{x,l_x} \supsetneq E_{x,l_{x}+1}=0$$
$$0\le \alpha_1(x) <\alpha_2(x) <\ldots <\alpha_{l_x}(x)<1$$
We call $\alpha=\{(\alpha_1(x),\ldots,\alpha_{l_x}(x))\}$ the system of weights. In this paper we will always assume that $\alpha$ is full flag, meaning that $l_x=r$ for all $x\in D$. If a system of weights is not provided, the filtered bundle $(E,E_\bullet)=(E,\{E_{x,i}\})$ is called a quasiparabolic vector bundle. Let
$$\pdeg(E,E_\bullet)=\deg(E)+\sum_{x\in D} \sum_{i=1}^r \alpha_i(x)$$
denote the parabolic degree of $(E,E_\bullet)$ with respect to the weight system $\alpha$. We say that a parabolic vector bundle $(E,E_\bullet)$ is $\alpha$-stable (respectively $\alpha$-semistable) if for any subbundle $F\subset E$ with the induced parabolic structure and weights we have
$$\frac{\pdeg(F,F_\bullet)}{\rk(F)} < \frac{\pdeg(E,E_\bullet)}{\rk(E)} \quad \text{(respectively } \le \text )$$

Let $\SM(X,D,r,\alpha,d)$ be the moduli space of $\alpha$-semistable full flag parabolic vector bundels on $(X,D)$ of rank $r$, degree $d$ and weight system $\alpha$. If the marked curve $(X,D)$ is clear from the context, we will omit it from the notation and write simply $\SM(r,\alpha,d)$. Similarly, given a line bundle $\xi\in \Pic^d(X)$, let $\SM(X,D,r,\alpha,\xi)\subset \SM(X,D,r,\alpha,d)$ denote the subvariety corresponding to parabolic bundles $(E,E_\bullet)$ such that $\det(E):=\wedge^rE\cong \xi$. Once again, we will also use the notation $\SM(r,\alpha,\xi)$ if the curve is clear from the context.

Through the rest of this work, we will assume, moreover, that $\alpha$ is a generic system of weights, meaning that that there are no integers $r'<r$ and $m$ and subsets $I(x)\subset \{1,\ldots,r\}$ of size $r'$ for $x\in D$ such that
$$r'\sum_{x\in D}\sum_{i=1}^r\alpha_i(x)-r\sum_{x\in D}\sum_{i\in I(x)} \alpha_i(x) =m$$
Under this condition, all $\alpha$-semistable parabolic bundles are $\alpha$-stable (c.f. \cite[Corollary 2.3]{AG18}), so $\SM(r,\alpha,d)$ and $\SM(r,\alpha,\xi)$ are smooth complex varieties and, moreover, $\SM(r,\alpha,\xi)$ is projective and rational \cite[Theorem 6.1]{BY99}.

Finally, $\SM(r,\alpha,d)$ admits a natural determinant map
$$\det:\SM(r,\alpha,d) \longrightarrow \Pic^d(X)$$
defined by $\det(E,E_\bullet):=\det(E)$. Clearly, $\SM(r,\alpha,\xi)=\det^{-1}(\xi)\subset \SM(r,\alpha,d)$.

\section{Basic transformations of quasiparabolic vector bundles}
\label{section:basicTransformations}
Let us recall the definition of the group of basic transformations described in \cite{AG19}, which acts on families of quasiparabolic bundles and induces isomorphisms between moduli spaces of parabolic vector bundles. For more details, see \cite[\S 5]{AG19}.

\begin{definition}
A basic transformation on $(X,D)$ is a tuple $T=(\sigma,s,L,H)$, where
\begin{itemize}
\item $\sigma$ is an automorphism $\sigma:X\longrightarrow X$ such that $\sigma(D)=D$,
\item $s\in \{1,-1\}$,
\item $L\in \Pic(X)$
\item $H$ is a divisor on $X$ with $0\le H\le (r-1)D$.
\end{itemize}
\end{definition}
Basic transformations act on quasiparabolic vector bundles as follows.
$$T(E,E_\bullet)=\left\{ \begin{array}{ll}
\sigma^*\left(L\otimes \SH_H(E,E_\bullet)\right) & s=1\\
\sigma^*\left(L\otimes \SH_H(E,E_\bullet)\right)^\vee & s=-1
\end{array}\right. $$
Where, $(E,E_\bullet)^\vee$ denotes the dual of the quasiparabolic vector bundle, i.e., the bundle $E^\vee$ with the filtration given by the annihilators $\op{ann}(E_{x,i})\subset E^\vee|_x$, and, given $H=\sum_{x\in D} h_x x$ with $0\le h_x<r$ for each $x\in D$, $\SH_H(E,E_\bullet)$ denotes the Hecke transformation of the quasiparabolic vector bundle $(E,E_\bullet)$ iterated $h_x$ times on each point $x\in D$. More precisely, given $x\in D$, construct $\SH_x(E,E_\bullet)=(H,H_\bullet)$ as follows. Let $H$ be the unique vector bundle which fits in the sequence
$$0\longrightarrow H \longrightarrow E \longrightarrow (E|_x/E_{x,2})\otimes \SO_x \longrightarrow 0$$
Restricting to the point $x$, we have a long exact sequence
$$0\longrightarrow E|_x/E_{x,2} \otimes \SO_X(-x)|_x \longrightarrow H|_x \stackrel{f}{\longrightarrow} E|_x \longrightarrow E|_x/E_{x,2} \longrightarrow 0$$
Then the full flag parabolic structure on $E|_x$ induces the following full flag filtration $H_\bullet$ on $H|_x$
$$H|_x=f^{-1}(E_{x,2}) \supsetneq \cdots \supsetneq f^{-1}(E_{x,r}) \supsetneq f^{-1}(0)= \frac{E|_x}{E_{x,2}} \otimes \SO_X(-x)|_x \supsetneq \frac{E_{x,2}}{E_{x,2}} \otimes \SO_X(-x)|_x=0$$
Finally, if $D=\{x_1,\ldots,x_n\}\subset X$ define
$$\SH_H= \SH_{x_1}^{h_{x_1}} \circ \cdots \circ \SH_{x_n}^{h_{x_n}}$$
Taking determinants of the previous expressions, we can check that $T$ acts on $\Pic(X)$ as follows.
$$T(\xi)=\sigma^*\left(L^r\otimes \xi(-H)\right)^s$$
In particular, if we compute the degree of both sides of the previous expressions, we can define for each $d\in \ZZ$
$$T(d)=s(\deg(L)+d-|H|)$$
Finally, $T$ acts on full flag systems of weights $\alpha$ on $(X,D)$ as follows.
$$T(\alpha)_i(x)=\left\{ \begin{array}{ll}
\SH_H(\alpha)_i(\sigma^{-1}(x)) & s=1\\
1-\SH_H(\alpha)_{r-i+1}(\sigma^{-1}(x))& s=-1
\end{array}\right. $$
and it can be proven that $(E,E_\bullet)$ is $\alpha$-stable if and only if $T(E,E_\bullet)$ is $T(\alpha)$-stable \cite[\S 5]{AG19}. Basic transformations form a group $\ST$ which has the following presentation.
\begin{lemma}[{\cite[Lemma 5.7]{AG19}}]
\label{lemma:compositionRules}
The group of basic transformations $\ST$ is generated by
\begin{itemize}
\item $\Sigma_\sigma=(\sigma,1,\SO_X,0)$
\item $\SD^+=(\id,1,\SO_X,0)=\id_\ST$
\item $\SD^-=(\id,-1,\SO_X,0)$
\item $\ST_L=(\id,1,L,0)$
\item $\SH_H=(\id,1,\SO_X,H)$
\end{itemize}
And we have the following composition rules
\begin{enumerate}
\item $\Sigma_\sigma \circ \Sigma_{\tau}=\Sigma_{\sigma\circ \tau}$
\item $\SD^s\circ \SD^t=\SD^{st}$
\item $\ST_L\circ \ST_M=\ST_{L\otimes M}$
\item If $0\le H_i\le (r-1)D$ for $i=1,2$ then
$$\SH_{H_1}\circ \SH_{H_2}=\ST_{L_{H_1+H_2}}\circ\SH_{H_1+H_2-L_{H_1+H_2}}$$
where, given a divisor $F=\sum_{x\in D} f_x x$, we define
$$L_F=\sum_{x\in D} \left \lfloor \frac{f_x}{r} \right\rfloor x$$
\item $\Sigma_\sigma \circ \SD^s = \SD^s \circ \Sigma_\sigma$
\item $\Sigma_\sigma \circ \ST_L=\ST_{\sigma^*L}\circ \Sigma_\sigma$
\item $\Sigma_\sigma \circ \SH_H=\SH_{\sigma^*H} \circ \Sigma_\sigma$
\item $\SD^-\circ \ST_L = \ST_{L^{-1}}\circ \SD^-$
\item $\SD^-\circ \SH_H= \ST_{\SO_X(D)}\circ \SH_{rD-H} \circ \SD^-$, for $H>0$
\item $\ST_L\circ \SH_H = \SH_H\circ \ST_L$ 
\end{enumerate}
\end{lemma}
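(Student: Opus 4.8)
This is essentially \cite[Lemma 5.7]{AG19}, and the plan is to verify it by unwinding the explicit action of basic transformations on quasiparabolic bundles (and on line bundles, degrees and weight systems, as tabulated above). The generation claim is immediate: reading the formula $T(E,E_\bullet)=\sigma^*\bigl(L\otimes\SH_H(E,E_\bullet)\bigr)^{(\vee\text{ if }s=-1)}$ term by term gives $T=\Sigma_\sigma\circ\SD^s\circ\ST_L\circ\SH_H$ (with $\SD^s=\SD^+$ or $\SD^-$ according to the sign of $s$), so the five listed maps generate $\ST$ as transformations of quasiparabolic bundles. That arbitrary products of them again have the form $(\sigma,s,L,H)$ — i.e.\ that $\ST$ is a group — will fall out of the relations, which allow any word in the generators to be rewritten in the normal form $\Sigma_\sigma\SD^s\ST_L\SH_H$.

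Relations (1),(2),(3),(5),(6),(7),(8),(10) are naturality statements, to be dispatched quickly. (1) is functoriality of pullback; (2) is $(E^\vee)^\vee\cong E$ and $\op{ann}(\op{ann}(E_{x,i}))=E_{x,i}$; (3) is associativity of the tensor product; (8) is $(L\otimes-)^\vee\cong L^{-1}\otimes(-)^\vee$ on both bundle and flag. For (5)--(7), apply the exact autoequivalence $\sigma^*$ — with $\sigma^*\SO_X\cong\SO_X$ and $\sigma^*\SO_x\cong\SO_{\sigma^{-1}(x)}$ — to the short exact sequences defining the dual, the twist and the elementary Hecke modification $\SH_x$; this merely relocates the marked point to $\sigma^{-1}(x)$. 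Relation (10) is the same computation with $L\otimes-$ replacing $\sigma^*$: tensoring $0\to\SH_x(E)\to E\to (E|_x/E_{x,2})\otimes\SO_x\to 0$ by $L$ is the defining sequence of $\SH_x(L\otimes E)$, and $L$ identifies the induced fibre flags over $x$.

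The content is in (4) and (9), and both rest on the cycle identity, which I would prove first: iterating the elementary Hecke modification at a point $x$ exactly $r$ times recovers the original quasiparabolic bundle twisted by $\SO_X(-x)$,
$$\SH_x^{\circ r}(E,E_\bullet)\cong (E,E_\bullet)\otimes\SO_X(-x),$$
since after $r$ steps the complete flag has cycled back to itself while the accumulated torsion quotient at $x$ reaches full length $r$, forcing the underlying bundle to become $E(-x)$. Granting this, relation (4) is bookkeeping: a composite of two Heckes iterates each $x\in D$ a total of $h^{(1)}_x+h^{(2)}_x$ times, and dividing by $r$ with remainder while commuting the resulting $\SO_X(-x)$-twists out past the remaining modifications via (10) puts the composite back in normal form, with $L_{H_1+H_2}$ counting the completed cycles. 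For (9), dualize a single length-one modification $\SH_x(E)\subset E$: from $0\to\SH_x(E)\to E\to Q_x\to 0$ with $Q_x$ torsion of length one at $x$ one gets $0\to E^\vee\to\SH_x(E)^\vee\to\SE xt^1(Q_x,\SO_X)\to 0$ with $\SE xt^1(Q_x,\SO_X)$ again of length one at $x$, so $\SH_x(E)^\vee$ is an \emph{upward} modification of $E^\vee$; the cycle identity rewrites an upward modification at $x$ as $\ST_{\SO_X(x)}\circ\SH_x^{\circ(r-1)}$, so $h_x$ of them collapse after renormalization to $\ST_{\SO_X(x)}\circ\SH_{(r-h_x)x}$ — exactly one compensating $\SO_X(x)$ per point. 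Summing over $\op{supp}H$, which equals $D$ precisely when $H>0$ (whence the hypothesis), and using that dualizing sends a flag to its annihilator flag, this gives $\SD^-\circ\SH_H=\ST_{\SO_X(D)}\circ\SH_{rD-H}\circ\SD^-$.

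The step I expect to be the main obstacle is (9): one must check that the filtration induced on $\SH_x(E)^\vee$ by annihilating the Hecke flag is \emph{exactly} the one that $\SH_{(r-h_x)x}$ produces on $\SD^-(E,E_\bullet)$ after the $\SO_X(x)$-twist — a fibrewise, point-by-point comparison of flags through the $\SE xt^1$ sequences — and that the twists contributed by those $\SE xt^1$ terms combine to precisely $\SO_X(D)$, with no spurious contribution from the renormalizations. By contrast, relation (4) is mechanical once the cycle identity is available, and the cycle identity itself, though geometric, is a direct manipulation of the defining modification sequences.
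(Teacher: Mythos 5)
The paper does not prove this lemma: it is imported verbatim from \cite[Lemma 5.7]{AG19}, so there is no in-paper argument to compare yours against. Your plan is a reasonable self-contained verification, and it is organised around the right key fact: the cycle identity $\SH_x^{\circ r}(E,E_\bullet)\cong (E,E_\bullet)\otimes\SO_X(-x)$ is indeed what powers both (4) and (9), and your reductions of the remaining relations to naturality of pullback, tensor and dual are all correct. Your treatment of (9) is sound, including the arithmetic: $(\ST_{\SO_X(x)}\circ\SH_x^{\circ(r-1)})^{h_x}$ does collapse, after extracting $h_x-1$ complete cycles from $\SH_x^{\circ h_x(r-1)}$, to $\ST_{\SO_X(x)}\circ\SH_x^{\circ(r-h_x)}$, giving exactly one $\SO_X(x)$ per point of $D$; and you correctly identify the fibrewise comparison of the annihilator flag with the Hecke flag through the $\mathcal{E}xt^1$ sequence as the only genuinely delicate verification.

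One point you should reconcile before declaring victory on (4): your derivation (correctly) produces
$$\SH_{H_1}\circ\SH_{H_2}=\ST_{\SO_X(-L_{H_1+H_2})}\circ\SH_{H_1+H_2-rL_{H_1+H_2}},$$
i.e.\ the Hecke divisor drops by $r$ for each completed cycle and the compensating twist is by $\SO_X$ of \emph{minus} the cycle-count divisor (this is forced by the determinant formula $T(\xi)=\sigma^*(L^r\otimes\xi(-H))^s$: the exponent $r$ on $L$ must absorb the $r$ Hecke steps of each cycle). The relation as printed, $\ST_{L_{H_1+H_2}}\circ\SH_{H_1+H_2-L_{H_1+H_2}}$, does not typecheck literally ($L_{H_1+H_2}$ is a divisor where a line bundle is expected) and is not degree-consistent as written, so it must be read with the convention above; your proof should state that convention explicitly rather than match the formula symbol for symbol. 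This is a notational issue with the statement, not a gap in your argument.
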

We will also consider the following subgoups of $\ST$
\begin{itemize}
\item $\ST^+=\{(\sigma,s,L,H)\in \ST \,  |\, s=1\}$
\item $\ST^-=\{(\sigma,s,L,H)\in \ST \,  |\, s=-1\}$
\item For each $\xi \in \Pic(X)$, $\ST_\xi=\{T\in \ST \,  |\, T(\xi)\cong \xi\}$
\item For each $d\in \ZZ$, $\ST_d=\{T\in \ST \,  |\, T(d)=d\}$
\item Given a system of weights $\alpha$, $\ST_\alpha=\{T\in \ST\,  |\, T(\alpha)\text{ in the same stability chamber as } \alpha\}$
\end{itemize}
More generally, we will use notations like $\ST_{d.\alpha}$ to denote $\ST_d \cap \ST_\alpha$. Some of the main results from \cite{AG19} can be summarized in the following  equalities. For curves of genus at least $6$ and $r>2$
$$\ST_\xi \cap \ST_\alpha=:\ST_{\xi,\alpha}=\Aut(\SM(r,\alpha,\xi))\subset \Aut_{3-\op{bir}}(\SM(r,\alpha,\xi))=\ST_\xi$$
and for $r=2$
$$\ST_{\xi,\alpha}\cap \ST^+=:\ST_{\xi,\alpha}^+=\Aut(\SM(2,\alpha,\xi))\subset \Aut_{3-\op{bir}}(\SM(2,\alpha,\xi))=\ST_\xi^+:=\ST_\xi \cap \ST^+$$

\begin{lemma}
\label{lemma:reciprocalTorelli}
Let $(X,D)$ be a smooth complex projective curve of genus $g\ge 3$. Let $\alpha$ and $\alpha'$ be generic full flag systems of weights and $d,d'\in \ZZ$. Then there exists a basic transformation $T\in \ST^+$ which induces a $3$-birational equivalence between $\SM(r,\alpha,d)$ and $\SM(r,\alpha',d')$.
\end{lemma}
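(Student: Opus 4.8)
The plan is to produce the required $3$-birational equivalence by a single tensorization, reducing the whole statement to a codimension estimate for wall-crossing loci of the kind that already underlies the fixed determinant theory.

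First I would normalise the degree. Choose $L\in\Pic(X)$ with $\deg L=d'-d$ and set $T=\ST_L=(\id,1,L,0)\in\ST^+$. By the formulas recalled above, $T(\alpha)=\SH_0(\alpha)=\alpha$, $T(d)=\deg L+d=d'$, and $T$ acts on quasiparabolic bundles simply as $(E,E_\bullet)\mapsto L\otimes(E,E_\bullet)$. Tensoring by a line bundle shifts every parabolic slope by the same constant, hence preserves $\beta$-stability for every generic system of weights $\beta$; so $T$ carries the locus of quasiparabolic bundles of degree $d$ which are \emph{simultaneously} $\alpha$-stable and $\alpha'$-stable isomorphically onto the corresponding locus in degree $d'$. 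Writing $\SU=\SM(r,\alpha,d)\cap\{(E,E_\bullet)\ \alpha'\text{-stable}\}$ and $\SU'=\SM(r,\alpha',d')\cap\{(E,E_\bullet)\ \alpha\text{-stable}\}$ (using genericity, so that semistable equals stable for both weight systems), these are open subsets, and $T$ restricts to an isomorphism $\SU\to\SU'$ whose inverse is induced by $\ST_{L^{-1}}$. It therefore remains only to check that $\SM(r,\alpha,d)\setminus\SU$ and $\SM(r,\alpha',d')\setminus\SU'$ have codimension at least $3$.

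This codimension bound is the step I expect to be the real work. Here $\SM(r,\alpha,d)\setminus\SU$ is the closed locus of $\alpha$-stable parabolic bundles of rank $r$, degree $d$ which are $\alpha'$-unstable. Since the space of full flag weight systems is a product of bounded simplices, only finitely many stability walls separate $\alpha$ from $\alpha'$, hence only finitely many numerical destabilisation types (a rank $r'<r$, a degree, and subsets $I(x)\subset\{1,\dots,r\}$ of size $r'$) can contribute. For each such type the corresponding stratum is the image of a bounded family of non-split parabolic extensions $0\to(F,F_\bullet)\to(E,E_\bullet)\to(E/F,(E/F)_\bullet)\to 0$ with $(F,F_\bullet)$ of that type, so its dimension is at most $\dim\SM^{\op{ss}}(F)+\dim\SM^{\op{ss}}(E/F)+\dim\PP\,\Ext^1_{\op{par}}((E/F)_\bullet,F_\bullet)$; a Riemann--Roch comparison with $\dim\SM(r,\alpha,d)$ shows the codimension is of order $g$, in particular at least $3$ as soon as $g\ge 3$. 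This is exactly the parabolic flip estimate behind the $3$-birationality of $\SM(r,\beta,\xi)$ for varying generic weights in \cite{AG18,AG19}, which I would quote from there and transport fibrewise along the smooth determinant morphism $\det\colon\SM(r,\alpha,d)\to\Pic^d(X)$ (whose fibres are the fixed determinant moduli spaces), obtaining the bound for $\SM(r,\alpha,d)\setminus\SU$ and, symmetrically, for $\SM(r,\alpha',d')\setminus\SU'$. Granting the estimate, $T=\ST_L\in\ST^+$ is the required transformation, and the reduction to it via $\ST_L$ is formal.
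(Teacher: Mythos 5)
There is a genuine gap in the very first step: the degree arithmetic of tensorization is wrong. For a rank $r$ bundle, $\deg(L\otimes E)=\deg E+r\deg L$, so your $T=\ST_L$ with $\deg L=d'-d$ sends $\SM(r,\alpha,d)$ to $\SM(r,\alpha,d+r(d'-d))$, not to degree $d'$. (The displayed formula $T(d)=s(\deg(L)+d-|H|)$ in the preliminaries is a typo for $s(r\deg(L)+d-|H|)$, as the action $T(\xi)=\sigma^*(L^r\otimes\xi(-H))^s$ and the computation $d=r\deg(L)+s(d-|H|)$ later in the paper make clear.) Consequently a pure tensorization can only realise degree shifts divisible by $r$, and your construction fails whenever $r\nmid(d'-d)$. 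The fix is exactly why Hecke transformations enter: write $d'-d=rm-k$ with $0\le k<r$, pick a parabolic point $x\in D$, and take $T=\ST_{\SO_X(mx)}\circ\SH_{kx}$, so that $T(d)=rm+d-k=d'$. This also breaks your claim that $T(\alpha)=\alpha$: the Hecke part genuinely moves the weights, so the comparison must be made between $T(\alpha)$-stability and $\alpha'$-stability, not between $\alpha$- and $\alpha'$-stability.

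The second half of your argument --- that the locus of bundles stable for one generic weight system but unstable for another has codimension at least $3$ --- is the right remaining ingredient, and it is precisely what the paper invokes as \cite[Lemma 2.3]{AG19}, applied to the common open subset of $T(\alpha)$-stable and $\alpha'$-stable bundles inside the two degree-$d'$ moduli spaces; you do not need to re-derive the stratum dimension count nor to transport it fibrewise along $\det$, since the cited lemma already applies to the fixed-degree moduli space. With the corrected transformation $T\in\ST^+$ and that citation, the proof closes as in the paper.
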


\begin{proof}
We will work analogously to \cite[Proposition 8.6]{AG19}. Write $d'-d=rm-k$ for some $0\le k<r$. Take a parabolic point $x\in D$ and consider the basic transformation $T=\ST_{\SO_X(mx)}\circ \SH_{kx}$. Then $T(d)=d'$ and $T$ induces an isomorphism
$$T:\SM(r,\alpha,d) \longrightarrow \SM(r,T(\alpha),d')$$
By \cite[Lemma 2.3]{AG19}, there exist open subsets $\SU\subset \SM(r,T(\alpha),d')$ and $\SU'\subset \SM(r,\alpha',d')$ which parameterize quasi-parabolic vector bundles which are both $T(\alpha)$-stable and $\alpha'$-stable. Thus, identifying $\SU$ and $\SU'$, $T:T^{-1}(\SU)\longrightarrow \SU\cong \SU'$ induces a $3$-birational map between $\SM(r,\alpha,d)$ and $\SM(r,\alpha',d')$.
\end{proof}

\begin{lemma}
\label{lemma:basicTransFinite}
For every $\xi\in \Pic^d(X)$, $\ST_\xi$ is finite.
\end{lemma}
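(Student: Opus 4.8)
The plan is to exploit the explicit parametrization of a basic transformation as a tuple $T=(\sigma,s,L,H)$ and to observe that, of its four components, only $L\in\Pic(X)$ ranges a priori over an infinite set; membership in $\ST_\xi$ will force $L^r$ to equal a fixed line bundle, and that leaves only finitely many possibilities. Concretely, three of the four data already vary over finite sets: since $X$ has genus at least $2$, the group $\Aut(X,D):=\{\sigma\in\Aut(X):\sigma(D)=D\}$ is finite; the sign $s$ lies in $\{1,-1\}$; and the Hecke divisor $H$ is one of the $r^{|D|}$ divisors with $0\le H\le(r-1)D$. So it suffices to fix $\sigma$, $s$ and $H$ and bound the number of $L$ with $(\sigma,s,L,H)\in\ST_\xi$.

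First I would translate the membership condition using the formula $T(\xi)=\sigma^*\bigl(L^r\otimes\xi(-H)\bigr)^s$ recorded above. Since $M\mapsto\sigma^*(M)^s$ is a group automorphism of $\Pic(X)$, the equation $\sigma^*\bigl(L^r\otimes\xi(-H)\bigr)^s\cong\xi$ is equivalent to $L^r\cong N$, where $N:=(\sigma^*)^{-1}(\xi^s)\otimes\xi^{-1}\otimes\SO_X(H)$ is a line bundle depending only on $\sigma,s,H,\xi$ and not on $L$. Thus the $L$'s contributing to $\ST_\xi$ for the fixed triple $(\sigma,s,H)$ are exactly the $r$-th roots of the fixed bundle $N$.

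Finally I would invoke the standard fact that the set of $r$-th roots of a fixed line bundle is either empty (for instance when $r\nmid\deg N$) or a coset of the $r$-torsion subgroup $J(X)[r]\cong(\ZZ/r\ZZ)^{2g}$ of the Jacobian inside $\Pic(X)$, and in either case is finite of cardinality at most $r^{2g}$. Summing over the finitely many admissible triples $(\sigma,s,H)$ then gives $|\ST_\xi|\le 2\,|\Aut(X,D)|\,r^{|D|}\,r^{2g}<\infty$.

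I do not expect a genuine obstacle here; the only point worth emphasizing is conceptual rather than technical, namely that $\Pic(X)$ is itself infinite, so the argument really needs the $r$-th-power relation: the endomorphism $L\mapsto L^r$ of $\Pic(X)$ has finite kernel, and the constraint $L^r\cong N$ both pins down $\deg L$ and restricts the remaining ambiguity to a torsor under $J(X)[r]$. One also tacitly uses $g\ge 2$ to know $\Aut(X)$ is finite, which is guaranteed by the hypotheses on $(X,D)$ in force throughout.
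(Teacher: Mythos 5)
Your proposal is correct and follows essentially the same route as the paper: both reduce to fixing $(\sigma,s,H)$ among finitely many choices and then observe that the condition on $L$ forces $L^r$ to be a fixed line bundle, so the admissible $L$ form (at most) a coset of the finite group $J(X)[r]$. The paper phrases this as ``any two solutions differ by an $r$-torsion element,'' which is the same argument.
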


\begin{proof}
Basic transformations in $\ST_\xi$ are given by tuples $T=(\sigma,s,L,H)$ such that
$$\xi \cong T(\xi)=\sigma^*(L^r\otimes \xi(-H))^s$$
Since there are only a finite number of automorphisms of the curve $X$, $s=\pm 1$ and there is a finite number of choices of a divisor $H$ with $0\le H\le (r-1)D$, then it is enough to prove that for each choice of $\sigma$, $s$ and $H$ there is a finite number of line bundles $L$ with $(\sigma,s,L,H)\in \ST_\xi$. Let $L,L'\in \Pic(X)$ such that $(\sigma,s,L,H),(\sigma,s,L',H)\in \ST_\xi$. Then
$$\sigma^*(L^r\otimes \xi(-H))^s\cong \sigma^*((L')^r\otimes \xi(-H))^s$$
so, taking the pullback by $\sigma^{-1}$, rising the result to the power $s$ and tensoring with $\xi^{-1}(H)$ yields $L^r\cong (L')^r$. This implies that $L'=L\otimes S$ for some $S\in J(X)[r]$. As the $r$-torsion of the Jacobian is finite, then so is $\ST_\xi$.
\end{proof}

\begin{lemma}
\label{lemma:basicFaithful}
Let $(X,D)$ be a smooth complex projective curve of genus $g\ge 4$. Let $\xi\in \Pic^d(X)$ be any line bundle. For a generic $(E,E_\bullet)\in \SM(r,\alpha,\xi)$ the following is satisfied. If $r>2$ then for each basic transformation $T\in \ST$ such that $T\ne \id$, $T(E,E_\bullet)\not\cong (E,E_\bullet)$. If $r=2$ the same result is true for nontrivial transformations $T\in \ST^+$.
\end{lemma}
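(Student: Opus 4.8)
The plan is to combine the finiteness of the relevant transformations with the faithfulness of the action they induce on the moduli space. First, if $T(E,E_\bullet)\cong(E,E_\bullet)$ then, taking determinants, $T(\xi)\cong\xi$, so $T$ lies in $\ST_\xi$ if $r>2$ and in $\ST_\xi^+=\ST_\xi\cap\ST^+$ if $r=2$; by Lemma~\ref{lemma:basicTransFinite} this group is \emph{finite}, so only finitely many $T$ can fix a parabolic bundle at all. For each such $T$, composing the isomorphism $T\colon\SM(r,\alpha,\xi)\xrightarrow{\sim}\SM(r,T(\alpha),\xi)$ with the natural $3$-birational identification $\SM(r,T(\alpha),\xi)\dashrightarrow\SM(r,\alpha,\xi)$ (the identity on the common open locus of quasiparabolic bundles stable for both $\alpha$ and $T(\alpha)$, which is dense by \cite[Lemma~2.3]{AG19} since $\SM(r,\alpha,\xi)$ is irreducible) yields a $3$-birational self-map $\widetilde T$ of $\SM(r,\alpha,\xi)$. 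One checks that $\op{Fix}(T):=\{(E,E_\bullet):T(E,E_\bullet)\cong(E,E_\bullet)\}$ is contained in a dense open subset $\SU_T$ over which $\widetilde T$ is a morphism, and that there $\op{Fix}(T)$ is exactly the (closed) locus where $\widetilde T$ coincides with the identity. Hence $\op{Fix}(T)$ fails to be dense precisely when $\widetilde T\ne\id$, and in that case any parabolic bundle outside the finite union $\bigcup_{T\ne\id}\overline{\op{Fix}(T)}$ has the asserted property.

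So the lemma reduces to the injectivity of the natural homomorphism $\ST_\xi\to\Aut_{3-\op{bir}}(\SM(r,\alpha,\xi))$ (resp. $\ST_\xi^+\to\Aut_{3-\op{bir}}(\SM(2,\alpha,\xi))$), which is the injectivity half of the description of the $3$-birational automorphism group in \cite{AG19}, so I would simply invoke it. The restriction to $\ST^+$ in rank $2$ is essential here: for $r=2$ one has $(E,E_\bullet)^\vee\cong(E,E_\bullet)\otimes(\det E)^{-1}$, so $\SD^-\circ\ST_{\xi^{-1}}\in\ST_\xi^-$ is $\ne\id$ yet induces the identity on $\SM(2,\alpha,\xi)$; no such coincidence occurs when $r>2$, where $E^\vee$ is never a line-bundle twist of a generic stable bundle.

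If instead one argues the injectivity directly, the structure is as follows. Put $T$ in normal form $T=\Sigma_\sigma\circ\SD^{s}\circ\ST_L\circ\SH_H$ (Lemma~\ref{lemma:compositionRules}) and suppose $\widetilde T=\id$. Since $T$ commutes with tensoring by $M\in J(X)[r]$ up to $M\mapsto\sigma^*(M)^{s}$, while the twisting action of $J(X)[r]$ on a generic parabolic bundle is free (a twist-invariant bundle descends to a nontrivial étale cover, a proper locus for $g\ge2$), one gets $\sigma^*(M)^{s}\cong M$ on $J(X)[r]$; invoking the rigidity of level structures — and, when $s=-1$ and $r>2$, that a generic parabolic bundle is not self-dual (automatic if $d\ne0$, a proper condition if $d=0$) — this pins down $\sigma$ and $s$ and reduces matters to $T=\ST_L\circ\SH_H$ with $L^{r}\cong\SO_X(-H)$. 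Tensoring by a nontrivial $r$-torsion line bundle again fixes only a proper locus, so everything comes down to showing that a nontrivial Hecke transformation $\SH_H$ cannot agree with a tensorization on a generic bundle. I expect this to be the main obstacle; it is handled by viewing both sides as $3$-birational self-maps of $\SM(r,\alpha,d)$ and using the birational geometry of the moduli space, which is exactly where the hypothesis $g\ge4$ enters — the recurring phenomenon, emphasized in \cite{AG19}, that makes the $k$-birational viewpoint indispensable even for statements about isomorphisms.
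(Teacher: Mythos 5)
Your proposal is correct and follows essentially the same route as the paper: take determinants to see that any $T$ fixing $(E,E_\bullet)$ lies in the finite group $\ST_\xi$ (respectively $\ST_\xi\cap\ST^+$), by Lemma \ref{lemma:basicTransFinite}, and then discard, for each of these finitely many nontrivial $T$, its non-dense fixed locus, the non-density being imported from the fixed-determinant results of \cite{AG19}. The only difference is cosmetic: the paper quotes \cite[Lemma 7.24]{AG19} directly for a dense open $\SU_T$ on which $T(E,E_\bullet)\not\cong(E,E_\bullet)$, whereas you derive the same fact from the injectivity of $\ST_\xi\to\Aut_{3-\op{bir}}(\SM(r,\alpha,\xi))$ stated in Section \ref{section:basicTransformations}; your closing sketch of a direct proof of that injectivity is not needed for the argument.
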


\begin{proof}
By \cite[Lemma 7.24]{AG19}, given a fixed $\xi$ and $r>2$, for each $T\in \ST$, there exists an open subset $\SU_T\subset \SM(r,\alpha,\xi)$ such that for each $(E,E_\bullet)\in \SU_T$, $T(E,E_\bullet)\not\cong (E,E_\bullet)$. Let us consider the subset
$$\SU=\bigcap_{T\in \ST_\xi} \SU_T$$
by Lemma \ref{lemma:basicTransFinite}, it is an intersection of a finite number of open dense subsets, so it is open and dense. Let us verify that the parabolic bundles in $\SU$ are not fixed by any basic transformation. Suppose the contrary. Let $T\in \ST$ and $(E,E_\bullet)\in \SU$ such that $T(E,E_\bullet)\cong (E,E_\bullet)$. Then, taking determinants, we have
$$\xi=\det(E)\cong \det(T(E,E_\bullet))=T(\xi)$$
so $T\in \ST_\xi$, but this is impossible, since this implies that $(E,E_\bullet)\in \SU\subset \SU_T$.
The case $r=2$ is completely analogous, building $\SU_T\subset \SM(2,\alpha,\xi)$ for $T\in \ST^+$ via \cite[Lemma 7.24]{AG19} and then taking $\SU=\bigcap_{T\in \ST^+_\xi} \SU_T$, where $\ST^+_\xi=\ST^+\cap \ST_\xi$.
\end{proof}

\section{Torelli type theorems}
\label{section:Torelli}

Before engaging the full classification, we will start by proving a Torelli type theorem, stating that the $3$-birational class of the moduli space $\SM(X,D,r,\alpha,d)$ identifies univocally the isomorphism class of the marked curve $(X,D)$. We will start by proving  that we can recover the determinant map.

\begin{lemma}
\label{lemma:recoverDeterminant}
Let $\alpha$ and $\alpha'$ be full flag systems of weights over marked curves $(X,D)$ and $(X',D')$ respectively. Let $\Phi:\SM(X,D,r,\alpha,d)\dashrightarrow \SM(X',D',r',\alpha',\xi')$ be a birational map. Then there is an isomorphism $\varphi:\Pic^d(X) \longrightarrow \Pic^{d'}(X')$ such that the following diagram commutes.
\begin{eqnarray*}
\xymatrixcolsep{3.5pc}
\xymatrix{
\SM(X,D,r,\alpha,d) \ar@{-->}[r]^{\Phi} \ar[d]_{\det} & \SM(X',D',r',\alpha',d') \ar[d]^{\det}\\
\Pic^d(X) \ar[r]^{\varphi} & \Pic^{d'}(X')
}
\end{eqnarray*}
\end{lemma}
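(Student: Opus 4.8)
The key fact is that $\SM(X,D,r,\alpha,\xi')$ is rational (by \cite[Theorem 6.1]{BY99}), hence rationally connected, and more importantly the fibers of $\det$ on the target are \emph{all} isomorphic to the fixed determinant moduli space. The plan is to recover the determinant map as the maximal rationally connected (MRC) fibration, or more elementarily, to use the fact that the general fiber of $\det$ on the source is rationally connected while $\Pic^d(X)$ admits no nonconstant rational map from a rationally connected variety. First I would observe that, since $\Phi$ is birational, it restricts to an isomorphism $\Phi: \SU \to \SU'$ between open dense subsets with complements of codimension $\ge 1$; by shrinking I may assume $\SU$ meets a general fiber $\SM(X,D,r,\alpha,\xi) = \det^{-1}(\xi)$ in a dense open subset, and likewise on the target side. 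Then I would consider the composite $\det' \circ \Phi: \SU \dashrightarrow \Pic^{d'}(X')$ and show it is constant on the general fiber $\det^{-1}(\xi) \cap \SU$: indeed that fiber is an open dense subset of the rational variety $\SM(X,D,r,\alpha,\xi)$, hence rationally connected, so any rational map from it to the abelian variety $\Pic^{d'}(X')$ is constant (a rational map from a rationally connected variety to an abelian variety, or more generally to any variety with no rational curves, is constant — equivalently, the Albanese of a rationally connected variety is trivial).

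Having shown $\det' \circ \Phi$ factors through $\det$ on a dense open set, it descends to a rational map $\varphi: \Pic^d(X) \dashrightarrow \Pic^{d'}(X')$ with $\det' \circ \Phi = \varphi \circ \det$ wherever everything is defined. Next I would argue $\varphi$ is actually a morphism: $\Pic^d(X)$ is a smooth projective variety (a torsor under the abelian variety $J(X)$) and $\Pic^{d'}(X')$ contains no rational curves, so a rational map between them extends to a morphism everywhere (a rational map from a smooth variety to a variety admitting no rational curves is defined in codimension one, and in fact everywhere by the same no-rational-curves argument applied to the indeterminacy locus). Symmetrically, running the same argument with $\Phi^{-1}$ produces a morphism $\psi: \Pic^{d'}(X') \to \Pic^d(X)$ with $\det \circ \Phi^{-1} = \psi \circ \det'$. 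Composing, $\psi \circ \varphi \circ \det = \det$ on a dense open subset of $\SM(X,D,r,\alpha,d)$, and since $\det$ is surjective this forces $\psi \circ \varphi = \id_{\Pic^d(X)}$, and likewise $\varphi \circ \psi = \id$; hence $\varphi$ is an isomorphism and the diagram commutes.

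The main obstacle I expect is the bookkeeping needed to make ``$\det' \circ \Phi$ is constant on the general fiber'' rigorous: one must check that for a \emph{general} $\xi \in \Pic^d(X)$ the open set $\SU$ genuinely meets $\det^{-1}(\xi)$ in a dense open subset (so that the rational-connectivity argument applies to a dense open subset of $\SM(r,\alpha,\xi)$ rather than to a possibly smaller locally closed piece), which follows from generic flatness of $\det$ together with the fact that $\SM(r,\alpha,d) \setminus \SU$ has codimension $\ge 1$ and hence cannot contain a whole fiber for general $\xi$. A minor subtlety is that $\SM(r,\alpha,\xi)$ is only shown to be rational (hence rationally connected) in the source's notation for a \emph{fixed} $\xi$, but by homogeneity under tensoring by line bundles in $J(X)[1]$-torsors — or simply because all fibers of $\det$ are abstractly isomorphic via tensorization — rationality for one $\xi$ gives it for all, so every general fiber is rationally connected. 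Everything else is the standard ``recover the base of a fibration whose fibers are rationally connected and whose base has no rational curves'' argument.
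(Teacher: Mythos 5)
Your proposal is correct and follows essentially the same route as the paper: show that a general fiber of $\det$ meets the domain of definition of $\Phi$ in a dense open subset, use rationality of $\SM(X,D,r,\alpha,\xi)$ to see that $\det'\circ\Phi$ is constant on such fibers and hence descends to a rational map $\Pic^d(X)\dashrightarrow\Pic^{d'}(X')$, then upgrade it to an isomorphism using rigidity of abelian varieties and the symmetric argument for $\Phi^{-1}$. The only cosmetic difference is that the paper concludes by invoking the fact that a birational map between abelian varieties extends to an isomorphism, whereas you extend $\varphi$ and $\psi$ separately via the no-rational-curves argument and compose; these are interchangeable.
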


\begin{proof}
Let $\SU\subset \SM(X,D,r,\alpha,d)$ and $\SU'\subset \SM(X',D',r',\alpha',d')$ be open subsets such that $\Phi:\SU\to \SU'$ is an isomorphism. Let $\SZ$ and $\SZ'$ be the complements of $\SU$ and $\SU'$ respectively. As $\dim(\SZ)\le \dim(\SM(X,D,r,\alpha,d))-1$, then the generic fiber of the map $\det:\SZ\to \Pic^d(X)$ must have dimension at most $\dim(\SM(X,D,r,\alpha,d))-\dim(\Pic^d(X))-1=\dim(\SM(X,D,r,\alpha,\xi))-1$. The dimension of the fiber is upper semicontinuous, so there is an open dense subset $\SP \subset \Pic^d(X)$ such that for every $\xi\in \SP$, $\dim(\SZ\cap \det^{-1}(\xi))\le \dim(\det^{-1}(\xi))-1$. Therefore, for each $\xi \in \SP$, $\det^{-1}(\xi)\cap \SU$ is dense in $\SM(X,D,r,\alpha,\xi)$ and, therefore, $\det^{-1}(\xi)\cap \SU$ is rational.

Let $\tilde{\SU}=\det^{-1}(\SP)\cap \SU$. It is an open dense subset of $\SM(X,D,r,\alpha,d)$ for which $\Phi$ is well defined. Therefore, by composition, it admits a map $\tilde{\SU} \to \SU' \to \Pic^{d'}(X')$. As the fibers of $\tilde{\SU}\longrightarrow \SP$ for the determinant map are rational, each fiber must map to a single point in $\Pic^{d'}(X')$, so this map descends to a map $\varphi:\SP\to \Pic^{d'}(X')$. As the map $\det:\SM(X',D',r',\alpha',d')\to \Pic^{d'}(X')$ is surjective with equidimensional fibers and $\Phi(\tilde{\SU})\subset \SU'$ is dense, then its image $\varphi(\SP)$ is dense in $\Pic^{d'}(X')$. Repeating the argument for $\Phi^{-1}$ proves that there are open dense subsets $\tilde{\SP}\subset \Pic^{d}(X)$ and $\tilde{\SP}'\subset \Pic^{d'}(X')$ such that $\varphi:\tilde{\SP} \to \tilde{\SP}'$ is an isomorphism, thus inducing a birational map $\Pic^d(X) \dashrightarrow \Pic^{d'}(X')$. Birational maps between abelian varieties extend uniquely to isomorphisms (c.f. \cite[Theorem 3.8]{Milne08}), so $\varphi$ extends to an isomorphism $\varphi:\Pic^d(X)\to \Pic^{d'}(X')$. It only remains to prove that the diagram above is still commutative wherever $\Phi$ is defined. $\det \circ \Phi$ and $\varphi\circ \det$ are well defined maps $\SU\longrightarrow \Pic^{d'}(X)$ which agree on $\tilde{\SU}$. As $\tilde{\SU}$ is dense and $\Pic^{d'}(X)$ is separable, they are equal on $\SU$, so the diagram of rational maps is commutative.
\end{proof}

\begin{corollary}
\label{cor:openSubsets}
Let $\alpha$ and $\alpha'$ be full flag systems of weights over marked curves $(X,D)$ and $(X',D')$ respectively. Let $\Phi:\SM(X,D,r,\alpha,d)\dashrightarrow \SM(X',D',r',\alpha',\xi')$ be a $3$-birational map. Let $\varphi:\Pic^d(X) \longrightarrow \Pic^{d'}(X')$ be the isomorphism given by the previous lemma. Then there exist open dense subsets $\SU\subset \SM(X,D,r,\alpha,d)$, $\SU'\subset \SM(X',D',r',\alpha',d')$, $\SP\subset \Pic^d(X)$ and $\SP'\subset \Pic^{d'}(X')$ such that
\begin{itemize}
\item $\Phi$ is defined on $\SV$
\item The following diagram commutes
\begin{eqnarray*}
\xymatrixcolsep{3pc}
\xymatrixrowsep{2.5pc}
\xymatrix@!0{
\SM(X,D,r,\alpha,d) \ar@{-->}[rrrr]^{\Phi} \ar[ddd]_{\det} &&&& \SM(X',D',r',\alpha',d') \ar[ddd]^{\det}\\
&\SV \ar[rr]^{\Phi} \ar[d]_{\det} \ar@{_(->}[ul] && \SV' \ar[d]^{\det} \ar@{^(->}[ur]  &\\
& \SP \ar[rr]^{\varphi} \ar@{_(->}[dl] && \SP' \ar@{^(->}[dr] & \\
\Pic^d(X) \ar[rrrr]^{\varphi} &&&& \Pic^{d'}(X')
}
\end{eqnarray*}
\item For each $\xi\in \SP$, let $\SV_\xi=\SU\cap \det^{-1}(\xi)$. Let $\xi'=\varphi(\xi)$ and $\SU_{\xi'}'=\Phi(\SV_\xi)$. Then
$$\codim(\SM(X,D,r,\alpha,\xi)\backslash \SV_\xi)\ge 3, \quad \quad \codim(\SM(X',D',r',\alpha',\xi')\backslash \SV'_{\xi'})\ge 3$$
\end{itemize}
\end{corollary}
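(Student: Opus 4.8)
The plan is to refine the open sets produced by Lemma \ref{lemma:recoverDeterminant} so that, besides being compatible with the determinant maps via $\varphi$, their fibres over a generic line bundle still miss a codimension-$3$ locus. Realize $\Phi$ as an isomorphism between open subsets $\SU_0 \subset \SM(X,D,r,\alpha,d)$ and $\SU_0' \subset \SM(X',D',r',\alpha',d')$ with complements $\SZ_0$ and $\SZ_0'$ of codimension at least $3$. Both moduli spaces are irreducible and pure-dimensional (they are smooth and fibre over $\Pic^d$, resp. $\Pic^{d'}$, with irreducible rational fibres), so every irreducible component of $\SZ_0$ has dimension at most $\dim\SM(X,D,r,\alpha,d)-3 = \dim\SM(X,D,r,\alpha,\xi)+g-3$, and likewise for the components of $\SZ_0'$.

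Next I control the fibres of $\det$ restricted to these complements. Decompose $\SZ_0 = \bigcup_i Z_i$ into irreducible components. If $\overline{\det(Z_i)} \subsetneq \Pic^d(X)$, discard this proper closed set; if $\det(Z_i)$ is dense, then its generic fibre has dimension $\dim Z_i - g \le \dim\SM(X,D,r,\alpha,\xi)-3$, and by upper semicontinuity of fibre dimension the locus of $\xi$ where $\dim\bigl(Z_i\cap\det^{-1}(\xi)\bigr)$ exceeds this value is contained in a proper closed subset of $\Pic^d(X)$, which I also discard. Since there are finitely many components, what remains is an open dense $\SP_0\subset\Pic^d(X)$ with $\codim\bigl(\SZ_0\cap\det^{-1}(\xi),\,\SM(X,D,r,\alpha,\xi)\bigr)\ge 3$ for all $\xi\in\SP_0$. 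Running the identical argument for $\SZ_0'$ under $\det\colon\SM(X',D',r',\alpha',d')\to\Pic^{d'}(X')$ gives an open dense $\SP_0'\subset\Pic^{d'}(X')$ with $\codim\bigl(\SZ_0'\cap\det^{-1}(\xi'),\,\SM(X',D',r',\alpha',\xi')\bigr)\ge 3$ for all $\xi'\in\SP_0'$.

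Now assemble. Set $\SP := \SP_0\cap\varphi^{-1}(\SP_0')$ and $\SP' := \varphi(\SP)$, which are open dense because $\varphi$ is an isomorphism; and set $\SV := \SU_0\cap\det^{-1}(\SP)$ and $\SV' := \SU_0'\cap\det^{-1}(\SP')$. These are open, and dense because $\det$ is surjective (so preimages of dense opens are dense opens in the irreducible total spaces) and an intersection of two dense opens is dense. Using the commutativity $\det\circ\Phi = \varphi\circ\det$ from Lemma \ref{lemma:recoverDeterminant} together with the bijectivity of $\Phi\colon\SU_0\to\SU_0'$, one checks directly that $\Phi(\SV)=\SV'$ and, for each $\xi\in\SP$ with $\xi'=\varphi(\xi)$, that $\Phi$ restricts to a bijection $\SV_\xi = \SU_0\cap\det^{-1}(\xi)\to\SU_0'\cap\det^{-1}(\xi') = \SV'_{\xi'}$; in particular every such fibre is nonempty, since its complement in $\SM(X,D,r,\alpha,\xi)$ (resp.\ $\SM(X',D',r',\alpha',\xi')$) has codimension $\ge 3$, whence $\det(\SV)=\SP$ and $\det(\SV')=\SP'$. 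The asserted commutative diagram is then just the restriction of the diagram of Lemma \ref{lemma:recoverDeterminant}, and the codimension bounds in the last bullet are exactly those furnished by $\SP_0$ and $\SP_0'$, because $\SP\subset\SP_0$ and $\SP'=\varphi(\SP)\subset\SP_0'$.

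The one delicate point is the fibre-dimension estimate of the second paragraph: the hypothesis bounds $\dim\SZ_0$ (componentwise), not the dimension of a generic fibre of $\det|_{\SZ_0}$, so one genuinely has to split $\SZ_0$ into components and treat separately those whose image in $\Pic^d(X)$ is dense (invoking upper semicontinuity) and those whose image is not (discarding its closure). Everything else is routine manipulation of dense open subsets of irreducible varieties together with the commutative square already established in Lemma \ref{lemma:recoverDeterminant}.
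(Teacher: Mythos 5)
Your proof is correct and follows essentially the same route as the paper's: restrict to the open sets from Lemma \ref{lemma:recoverDeterminant}, show that for generic $\xi$ the fibre of the complement has codimension at least $3$, do the same on the target side, and intersect via $\varphi$. The only difference is that you spell out the fibre-dimension step (decomposing $\SZ_0$ into irreducible components and treating dominant and non-dominant components separately), which the paper compresses into a one-line appeal to equidimensionality of $\det$; your elaboration is accurate and arguably an improvement in rigor.
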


\begin{proof}
By the previous Lemma we know that $\dim(Pic^d(X))=\dim(\Pic^{d'}(X'))$ and $\dim(\SM(X,D,r,\alpha,d))=\dim(\SM(X',D',r',\alpha',d'))$ so, as the determinant map is equidimensional, we know that the dimension of their respective fibers is the same.

As before, let $\SU\subset \SM(X,D,r,\alpha,d)$ and $\SU'\subset \SM(X',D',r',\alpha',d')$ be open subsets such that there is an isomorphism $\Phi:\SU\to \SU'$ and let $\SZ$ and $\SZ'$ be their respective complements in the moduli spaces.
We know that $\codim(\SZ)\ge 3$, and the map $\det:\SM(X,r,\alpha,d)\to \Pic^d(X)$ is equidimensional and surjective, so for a generic $\xi\in \Pic^{d}(X)$, the codimension of $\SZ\cap \det^{-1}(\xi)$ in $\SM(X,r,\alpha,\xi)$ is at least $3$. Let $\SP_1\subset \Pic^d(X)$ be the open subset where this happens. Similarly, for a generic $\xi'\in \Pic^{d'}(X')$, the codimension of $\SZ\cap \det^{-1}(\xi')$ in $\SM(X',r',\alpha',\xi')$ is at least $3$. Let $\SP_1'\subset \Pic^{d'}(X)$ be the open subset of such line bundles. Finally, take
$$\SP=\SP_1\cap \varphi^{-1}(\SP_1'),\quad \quad \SP'=\varphi(\SP),$$
$$\SV=\SU\cap {\det}^{-1}(\SP),\quad \quad \SV'=\varphi(\SV).$$
\end{proof}

\begin{theorem}[{$3$-birational Torelli theorem}]
\label{thm:TorelliBir}
Let $(X,D)$ and $(X',D')$ be two smooth complex projective curves of genus $g\ge 4$ and $g'\ge 4$ respectively with sets of marked points $D\subset X$ and $D'\subset X'$. Let $\alpha$ and $\alpha'$ be full flag generic systems of weights over $(X,D)$ and $(X',D')$ respectively. Then $\SM(X,D,r,\alpha,d)$ and $\SM(X',D',r',\alpha',d')$ are $3$-birational if and only if
\begin{enumerate}
\item $r=r'$
\item $(X,D)$ is isomorphic to $(X',D')$, i.e., there exists an isomorphism $\sigma:X\stackrel{\sim}{\to} X'$ sending $D$ to $D'$.
\end{enumerate}
\end{theorem}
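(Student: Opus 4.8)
The plan is to establish the two implications by rather different means: the converse is a short composition of known constructions, while the direct implication reduces, fibre by fibre, to the fixed-determinant Torelli theorem of \cite{AG19}.

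\textbf{Reverse implication.} Assume $r=r'$ and fix an isomorphism $\sigma\colon X\to X'$ with $\sigma(D)=D'$. Pulling parabolic bundles back along $\sigma$ identifies $\SM(X',D',r,\alpha',d')$ with $\SM(X,D,r,\sigma^*\alpha',d')$ (the degree is preserved since $\sigma$ is an isomorphism), where $\sigma^*\alpha'$ is again a generic full flag system of weights on $(X,D)$. By Lemma \ref{lemma:reciprocalTorelli} there is a basic transformation $T\in\ST^+$ inducing a $3$-birational equivalence $\SM(X,D,r,\alpha,d)\dashrightarrow\SM(X,D,r,\sigma^*\alpha',d')$; composing with the above identification yields the desired $3$-birational equivalence between $\SM(X,D,r,\alpha,d)$ and $\SM(X',D',r',\alpha',d')$.

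\textbf{Direct implication.} Suppose $\Phi\colon\SM(X,D,r,\alpha,d)\dashrightarrow\SM(X',D',r',\alpha',d')$ is a $3$-birational map. Lemma \ref{lemma:recoverDeterminant} produces an isomorphism $\varphi\colon\Pic^d(X)\to\Pic^{d'}(X')$ commuting with the determinant maps; comparing dimensions gives $\dim J(X)=\dim J(X')$, so $g=g'$, consistent with the hypothesis. Applying Corollary \ref{cor:openSubsets}, we obtain a generic $\xi\in\Pic^d(X)$ with $\xi'=\varphi(\xi)$ and open subsets $\SV_\xi\subset\SM(X,D,r,\alpha,\xi)$, $\SU'_{\xi'}\subset\SM(X',D',r',\alpha',\xi')$ whose complements have codimension at least $3$ and such that $\Phi$ restricts to an isomorphism $\SV_\xi\stackrel{\sim}{\to}\SU'_{\xi'}$. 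Hence $\SM(X,D,r,\alpha,\xi)$ and $\SM(X',D',r',\alpha',\xi')$ are $3$-birational. Since $\alpha$ and $\alpha'$ are generic full flag systems of weights and $g,g'\ge 4$, the $3$-birational Torelli theorem for moduli spaces of parabolic bundles with fixed determinant from \cite{AG19} now yields $r=r'$ and an isomorphism $\sigma\colon X\to X'$ with $\sigma(D)=D'$, which is exactly $(1)$ and $(2)$.

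\textbf{Main obstacle.} The delicate point is the reduction in the last paragraph: a $3$-birational map of the total moduli spaces need not restrict, on a fibre of the determinant, to a map whose indeterminacy locus has codimension at least $3$ \emph{inside that fibre}. Securing this is precisely the role of Corollary \ref{cor:openSubsets} (via upper semicontinuity of fibre dimension and equidimensionality of $\det$); without it one would only obtain birational fibres, which is too weak to invoke the fixed-determinant classification. The remainder is bookkeeping, chiefly checking that the genus bounds match those demanded by the cited results of \cite{AG19}.
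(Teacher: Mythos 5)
Your proof is correct and follows essentially the same route as the paper: Lemma \ref{lemma:recoverDeterminant} and Corollary \ref{cor:openSubsets} to cut the $3$-birational map down to a $3$-birational equivalence between generic fibres of the determinant, then the fixed-determinant $3$-birational Torelli theorem of \cite{AG19}, with the converse coming from Lemma \ref{lemma:reciprocalTorelli} composed with pullback along $\sigma$. Your explicit identification of the obstacle (that codimension $\ge 3$ must be secured inside the fibre, not just globally) is exactly the point Corollary \ref{cor:openSubsets} is designed to handle.
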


\begin{proof}
By Corollary \ref{cor:openSubsets}, there exists $\xi\in \Pic^d(X)$ and $\xi'=\varphi(\xi)\in \Pic^{d'}(X')$ and open subsets $\SV_\xi \subset \SM(X,D,r,\alpha,\xi)$, $\SV'_{\xi'} \subset \SM(X',D',r',\alpha',\xi')$such that
$$\Phi(\SV_\xi)=\SV'_{\xi'}$$
$$\codim(\SM(X,D,r,\alpha,\xi)\backslash \SU_\xi, \SM(X,D,r,\alpha,\xi))\ge 3$$
$$\codim(\SM(X',D',r',\alpha',\xi')\backslash \SU'_{\xi'}, \SM(X',D',r',\alpha',\xi'))\ge 3$$
Thus, $\Phi$ induces a $3$-birational equivalence between $\SM(X,D,r,\alpha,\xi)$ and $\SM(X',D',r',\alpha',\xi')$. Now we can apply the $3$-birational version of the Torelli theorem for parabolic vector bundles \cite[Theorem 8.5]{AG19} to obtain the result. The reciprocal is a direct consequence of Lemma \ref{lemma:reciprocalTorelli}.
\end{proof}

In particular, this proves that the isomorphism class of the moduli space of parabolic bundles with fixed degree identifies uniquely the isomorphism class of the marked curve $(X,D)$.

\begin{corollary}[Torelli theorem]
\label{cor:Torelli}
Let $(X,D)$ and $(X',D')$ be two smooth complex projective curves of genus $g\ge 4$ and $g'\ge 4$ respectively with set of marked points $D\subset X$ and $D'\subset X'$. Let $\alpha$ and $\alpha'$ be full flag generic systems of weights over $(X,D)$ and $(X',D')$ respectively. If $\SM(X,D,r,\alpha,d)$ and $\SM(X',D',r',\alpha',d')$ are isomorphic then
\begin{enumerate}
\item $r=r'$
\item $(X,D)$ is isomorphic to $(X',D')$, i.e., there exists an isomorphism $\sigma:X\stackrel{\sim}{\to} X'$ sending $D$ to $D'$.
\end{enumerate}
\end{corollary}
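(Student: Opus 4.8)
The plan is to deduce this directly from the $3$-birational Torelli theorem (Theorem \ref{thm:TorelliBir}), by observing that an isomorphism is a particular instance of a $3$-birational map. Concretely, if $\Phi:\SM(X,D,r,\alpha,d)\longrightarrow \SM(X',D',r',\alpha',d')$ is an isomorphism, then taking $\SU=\SM(X,D,r,\alpha,d)$ and $\SU'=\SM(X',D',r',\alpha',d')$ exhibits $\Phi$ as an isomorphism between open subsets whose complements $\SM(X,D,r,\alpha,d)\backslash \SU=\emptyset$ and $\SM(X',D',r',\alpha',d')\backslash \SU'=\emptyset$ are empty. With the standard convention that the empty subscheme has codimension $+\infty$ inside any variety, both complements have codimension at least $3$, so $\Phi$ satisfies the definition of a $3$-birational map and the two moduli spaces are $3$-birational in the sense of the introduction.

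Since $(X,D)$ and $(X',D')$ have genus at least $4$ and carry full flag generic systems of weights, the hypotheses of Theorem \ref{thm:TorelliBir} are met, and that theorem yields immediately $r=r'$ together with the existence of an isomorphism $\sigma:X\stackrel{\sim}{\to}X'$ with $\sigma(D)=D'$, which is exactly the assertion of the corollary. There is no genuine obstacle in this deduction: the entire mathematical content is already contained in Theorem \ref{thm:TorelliBir} (which in turn rests on Corollary \ref{cor:openSubsets} and the fixed-determinant $3$-birational Torelli theorem of \cite{AG19}). The only point that deserves to be made explicit is the elementary observation that isomorphisms form a subclass of $3$-birational maps, so that no separate argument is needed and the corollary follows in one line from the theorem.
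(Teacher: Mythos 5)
Your proposal is correct and is exactly the paper's (implicit) argument: the corollary is stated immediately after Theorem \ref{thm:TorelliBir} with no separate proof, precisely because an isomorphism is a $3$-birational map with empty complements, so the theorem applies verbatim. Nothing is missing.
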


This Corollary and the previous Theorem provide strong refinements of the Torelli type theorem obtained in \cite{BGL16} when the parabolic weights are generic. In that paper, it was proven that any isomorphism $\SM(X,D,r,\alpha,d)\stackrel{\sim}{\longrightarrow} \SM(X',D',r,\alpha,d)$ which preserves the Néron-Severi class of the determinant bundle of the moduli space induces an isomorphism $(X,D)\cong (X',D')$. In this case, the generalization is twofold. On one hand, Theorem \ref{thm:TorelliBir} proves a Torelli theorem between moduli spaces which are built with possibly different choices of the rank $r$, degree $d$ and, more importantly, the stability condition $\alpha$. On the other hand, it proves that the assumption on the determinant bundle is not necessary and, in fact, it shows a correspondence between $3$-birational classes of moduli spaces of parabolic bundles and pairs consisting on the isomorphism class of marked curves $(X,D)$ and a rank $r$.

\section{Action of the automorphisms of the Jacobian}
\label{section:action}

Let $J(X)[r]$ denote the $r$-torsion subgroup of the Jacobian $J(X)$. Let $\Aut(J(X),J(X)[r])$ denote the group of automorphisms of the Jacobian of $X$ which fix the $r$-torsion subgroup, i.e., the group of automorphisms $\rho:J(X)\longrightarrow J(X)$ such that $\rho|_{J(X)[r]}=\id_{J(X)[r]}$. Observe that, as $J(X)$ is abelian, all automorphisms of the Jacobian factor as the composition of a group automorphism and a translation (c.f. \cite[Corollary 1.2]{Milne08}). By fixing the subset $J(X)[r]$, which includes $\SO_X\in J(X)[r]$, all elements in $\Aut(J(X),J(X)[r])$ are, in particular, group automorphisms.

In this section we will describe how this group $\Aut(J(X),J(X)[r])$ acts by automorphisms on the moduli space of parabolic vector bundles $\SM(r,\alpha,d)$. Let $\rho\in \Aut(J(X),J(X)[r])$. Then $\rho-\id:J(X)\longrightarrow J(X)$  is a group homomorphism such that  $(\rho-\id)|_{J(X)[r]}=0$. Therefore, $(\rho-\id)$ factorizes through the multiplication by $r$, and there exists a unique homomorphism $\wt{\rho}:J(X)\longrightarrow J(X)$ such that
\begin{eqnarray*}
\xymatrix{
J(X)[r] \ar[dr]^0 \ar@{^(->}[d] &\\
J(X) \ar[r]^{\rho-\id} \ar@{->>}[d]_{r \cdot(-)} & J(X)\\
J(X) \ar@{-->}[ru]_{\wt{\rho}} &\\
}
\end{eqnarray*}
As $f$ is an homomorphism, $f(rx)=rf(x)$ for all $x\in J(X)$, so $\rho=\id+r\wt{\rho}$. A direct computation allows us to verify the following property of $\wt{\rho}$.

\begin{lemma}
\label{lemma:tildeComposition}
Let $\rho,\rho'\in \Aut(J(X),J(X)[r])$. Then
$$\wt{\rho'\circ \rho}=\wt{\rho'}+\wt{\rho}+r\wt{\rho'}\circ\wt{\rho}$$
\end{lemma}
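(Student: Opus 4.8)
The plan is a short direct computation with the three defining relations $\rho=\id+r\wt{\rho}$, $\rho'=\id+r\wt{\rho'}$ and $\rho'\circ\rho=\id+r\,\wt{\rho'\circ\rho}$. First I would note that the last relation makes sense: $\rho'\circ\rho$ again lies in $\Aut(J(X),J(X)[r])$, being a composition of automorphisms of $J(X)$ each of which restricts to the identity on $J(X)[r]$, so $\wt{\rho'\circ\rho}$ is defined by the same universal property as in the construction preceding the lemma. The only structural facts I will use are that every map in sight is a group endomorphism of the abelian variety $J(X)$ — hence composition distributes over addition on both sides and commutes with multiplication by integers — and that $\End(J(X))$ is a torsion-free abelian group.

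Next I would substitute and expand. Evaluating $(\rho'\circ\rho)(x)=(\id+r\wt{\rho'})(x+r\wt{\rho}(x))$ on a point $x\in J(X)$ and using that $\wt{\rho'}$ is a homomorphism (so $\wt{\rho'}(r\wt{\rho}(x))=r\wt{\rho'}(\wt{\rho}(x))$), one gets
\begin{align*}
(\rho'\circ\rho)(x)
&= x+r\wt{\rho}(x)+r\wt{\rho'}(x)+r^2(\wt{\rho'}\circ\wt{\rho})(x),
\end{align*}
that is, $\rho'\circ\rho=\id+r\bigl(\wt{\rho}+\wt{\rho'}+r\,\wt{\rho'}\circ\wt{\rho}\bigr)$ as endomorphisms of $J(X)$.

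Finally I would compare this with $\rho'\circ\rho=\id+r\,\wt{\rho'\circ\rho}$, which yields $r\,\wt{\rho'\circ\rho}=r\bigl(\wt{\rho'}+\wt{\rho}+r\,\wt{\rho'}\circ\wt{\rho}\bigr)$, and then cancel the factor $r$: this is legitimate because multiplication by $r$ is injective on the torsion-free group $\End(J(X))$ (equivalently, one may invoke the uniqueness clause in the universal property defining $\wt{\,\cdot\,}$, since $\wt{\rho'}+\wt{\rho}+r\,\wt{\rho'}\circ\wt{\rho}$ is a homomorphism whose $r$-fold equals $\rho'\circ\rho-\id$). This gives $\wt{\rho'\circ\rho}=\wt{\rho'}+\wt{\rho}+r\,\wt{\rho'}\circ\wt{\rho}$. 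I do not expect any genuine obstacle: the identity is essentially formal bookkeeping, and the torsion-freeness of $\End(J(X))$ — needed only to cancel $r$ — is the single non-formal ingredient, which can also be sidestepped via the uniqueness already built into the definition of $\wt{\,\cdot\,}$.
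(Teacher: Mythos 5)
Your proposal is correct and follows essentially the same route as the paper: expand $(\id+r\wt{\rho'})\circ(\id+r\wt{\rho})$ using that $\wt{\rho'}$ is a homomorphism, and conclude by the uniqueness of $\wt{\rho'\circ\rho}$ in its defining factorization (the paper invokes exactly this uniqueness rather than torsion-freeness of $\End(J(X))$, but the two justifications are interchangeable here). No gaps.
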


\begin{proof}
Since $\rho=\id+r\wt{\rho}$ and $\rho'=\id+r\wt{\rho'}$, then
$$\rho'\circ \rho = (\id+r\wt{\rho'})\circ (\id+r\wt{\rho}) = \id+ r\wt{\rho'}+r\wt{\rho}+r^2\wt{\rho'}\circ \wt{\rho}=\id+r\left( \wt{\rho'}+\wt{\rho}+r\wt{\rho'}\circ \wt{\rho}\right)$$
As $\rho'\circ \rho = \id + r \wt{\rho'\circ \rho}$ and $\wt{\rho'\circ\rho}$ is unique, the equality follows. 
\end{proof}

We can then define an action of $\Aut(J(X),J(X)[r])$ on $\SM(r,\alpha,d)$ as follows. Let us fix once an for all a line bundle $\xi\in \Pic^d(X)$. Then for each $\rho\in \Aut(J(X),J(X)[r])$, define
$$\rho\cdot (E,E_\bullet) := (E,E_\bullet) \otimes \wt{\rho}(\det(E)\otimes \xi^{-1})$$

This map is clearly well defined on families of parabolic vector bundles, and tensoring by an element of the Jacobian preserves the stability of the parabolic vector bundle and the degree of the underlying bundle, so it defines a map
$$\SA^\xi: \Aut(J(X),J(X)[r])\times \SM(r,\alpha,d) \longrightarrow \SM(r,\alpha,d)$$
For each $\rho\in \Aut(J(X),J(X)[r])$, let $\SA_\rho^\xi:\SM(r,\alpha,d)\longrightarrow \SM(r,\alpha,d)$ denote the induced self-map of the moduli space.

\begin{lemma}
\label{lemma:autoJacobianAction}
The map $\SA^\xi: \Aut(J(X),J(X)[r]) \times \SM(r,\alpha,d)\longrightarrow \SM(r,\alpha,d)$ is an action by automorphisms of the moduli space.
\end{lemma}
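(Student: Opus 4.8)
The plan is to verify the three standard axioms of a group action by automorphisms: that $\SA^\xi_{\id}$ is the identity map, that $\SA^\xi_{\rho'}\circ \SA^\xi_\rho = \SA^\xi_{\rho'\circ\rho}$, and that each $\SA^\xi_\rho$ is an automorphism of the variety $\SM(r,\alpha,d)$. The identity axiom is immediate: $\wt{\id}=0$ by construction (since $\id-\id=0$ factors as $r\cdot 0$), so $\SA^\xi_{\id}(E,E_\bullet)=(E,E_\bullet)\otimes \SO_X=(E,E_\bullet)$. The fact that $\SA^\xi$ is a morphism (in families) was already noted before the statement, since tensoring by a line bundle of degree $0$ preserves both stability and the degree $d$, and $\wt{\rho}$ is a morphism of abelian varieties; so once the cocycle identity is checked, each $\SA^\xi_\rho$ has $\SA^\xi_{\rho^{-1}}$ as a two-sided inverse and is therefore an automorphism.

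The only substantive point is the composition law, and here the key observation is that $\SA^\xi_\rho$ changes the determinant in a controlled way. First I would record how $\det$ transforms: if $(F,F_\bullet)=\SA^\xi_\rho(E,E_\bullet)=(E,E_\bullet)\otimes \wt{\rho}(\det(E)\otimes\xi^{-1})$, then $\det(F)=\det(E)\otimes \wt{\rho}(\det(E)\otimes\xi^{-1})^{\otimes r}$, and since $\rho=\id+r\wt{\rho}$ as endomorphisms of $J(X)$, we get $\det(F)\otimes\xi^{-1}=\rho(\det(E)\otimes\xi^{-1})$. In other words, writing $L=\det(E)\otimes\xi^{-1}\in J(X)$, the map $\SA^\xi_\rho$ sends a bundle with ``normalized determinant'' $L$ to one with normalized determinant $\rho(L)$. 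Now compute the composition: applying $\SA^\xi_\rho$ first gives normalized determinant $\rho(L)$ and tensors by $\wt{\rho}(L)$; applying $\SA^\xi_{\rho'}$ next tensors by $\wt{\rho'}(\rho(L))=\wt{\rho'}(L+r\wt{\rho}(L))=\wt{\rho'}(L)+r\wt{\rho'}(\wt{\rho}(L))$ using additivity of $\wt{\rho'}$ and $\rho=\id+r\wt{\rho}$. Hence the total twist is by
\[
\wt{\rho}(L)+\wt{\rho'}(L)+r\,\wt{\rho'}(\wt{\rho}(L)) = \bigl(\wt{\rho}+\wt{\rho'}+r\,\wt{\rho'}\circ\wt{\rho}\bigr)(L) = \wt{\rho'\circ\rho}(L),
\]
where the last equality is exactly Lemma \ref{lemma:tildeComposition}. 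This is precisely the twist defining $\SA^\xi_{\rho'\circ\rho}(E,E_\bullet)$, so the cocycle identity holds.

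The main (mild) obstacle is purely bookkeeping: one must be careful that the Jacobian element by which one tensors at the second step depends on the determinant of the \emph{output} of the first step, not of the original bundle, so the determinant transformation formula $\det(F)\otimes\xi^{-1}=\rho(\det(E)\otimes\xi^{-1})$ must be established before chaining. Once that is in place, Lemma \ref{lemma:tildeComposition} does all the work and the remaining axioms are formal. I would close by noting that $\SA^\xi_\rho\circ\SA^\xi_{\rho^{-1}}=\SA^\xi_{\id}=\id=\SA^\xi_{\rho^{-1}}\circ\SA^\xi_\rho$, so each $\SA^\xi_\rho$ is bijective with regular inverse, completing the proof that $\SA^\xi$ is an action by automorphisms.
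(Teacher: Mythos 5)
Your proposal is correct and follows essentially the same route as the paper: establish the determinant transformation $\det(F)\otimes\xi^{-1}=\rho(\det(E)\otimes\xi^{-1})$ first, chain the two twists using additivity of $\wt{\rho'}$ and the identity $\wt{\rho'\circ\rho}=\wt{\rho'}+\wt{\rho}+r\,\wt{\rho'}\circ\wt{\rho}$ of Lemma \ref{lemma:tildeComposition}, and deduce invertibility formally from the composition law together with $\SA^\xi_{\id}=\id$. No substantive differences.
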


\begin{proof}
First of all, observe that, by construction, $\wt{\id}=0$ implies that $\SA_{\id}^\xi=\id_{\SM(r,\alpha,d)}$. Thus, if we prove that the map $\SA^\xi$ is an action, then for each $\rho\in \Aut(J(X),J(X)[r])$, we have
$$\SA_\rho^\xi \circ \SA_{\rho^{-1}}^\xi = \SA_{\rho\circ\rho^{-1}}^\xi=\SA_{\id}^\xi=\id_{\SM(r,\alpha,d)}=\SA_{\rho^{-1}}^\xi \circ \SA_{\rho}^\xi$$
so $\SA_\rho^\xi$ is an invertible map and, therefore, an isomorphism.

It is, therefore, enough to check that $\SA_\rho^\xi \circ \SA_{\rho'}^\xi=\SA_{\rho\circ \rho'}^\xi$ for any $\rho,\rho'\in \Aut(J(X),J(X)[r])$. Let $E\in \SM(r,\alpha,\xi)$. Let 
$$(E',E'_\bullet)=\SA_\rho(E,E_\bullet)^\xi=(E,E_\bullet) \otimes \wt{\rho}(\det(E)\otimes \xi^{-1})$$
Then
$$\det(E')=\det(E)\otimes \wt{\rho}(\det(E)\otimes \xi^{-1})^r = \det(E) \otimes (\rho-\id)(\det(E)\otimes \xi^{-1}) = \rho(\det(E)\otimes \xi^{-1})\otimes \xi$$
Thus, taking into account the relation from Lemma \ref{lemma:tildeComposition},
\begin{multline*}
\SA_\rho'(\SA_\rho(E,E_\bullet))^\xi=(E',E'_\bullet)\otimes \wt{\rho'}(\det(E')\otimes \xi^{-1})\\
=(E,E_\bullet)\otimes \wt{\rho}(\det(E)\otimes \xi^{-1}) \otimes \wt{\rho'}\left(\rho(\det(E)\otimes \xi^{-1}\right)\\
=(E,E_\bullet)\otimes \wt{\rho}(\det(E)\otimes \xi^{-1}) \otimes \wt{\rho'}\left(\det(E)\otimes \xi^{-1}+r\wt{\rho}(\det(E)\otimes \xi^{-1})\right)\\
=(E,E_\bullet)\otimes \left( \wt{\rho}(\det(E)\otimes \xi^{-1})+\wt{\rho'}(\det(E)\otimes \xi^{-1}) + r \wt{\rho'}\circ \wt{\rho}(\det(E)\otimes \xi^{-1}) \right)\\
= (E,E_\bullet) \otimes \wt{\rho'\circ \rho}(\det(E)\otimes \xi^{-1}) = \SA_{\rho'\circ\rho}^\xi(E,E_\bullet)
\end{multline*}
\end{proof}

\begin{remark}
\label{rmk:changexi}
The action $\SA^\xi$ depends on the choice of the line bundle $\xi\in \Pic^d(X)$. Nevertheless, given a fixed $\rho\in \Aut(J(X),J(X)[r])$, different choices of $\xi$ give rise to automorphisms $\SA^\xi_\rho$ which only differ by tensorization by an element in $J(X)$. More precisely, given $\xi,\xi'\in \Pic^d(X)$, $L=\xi'\otimes \xi^{-1}\in J(X)$ satisfies that for each $(E,E_\bullet)\in \SM(r,\alpha,d)$ 
$$\SA_\rho^{\xi}(E,E_\bullet)=(E,E_\bullet)\otimes \wt{\rho}(\det(E)\otimes \xi^{-1}) = (E,E_\bullet)\otimes \wt{\rho}(\det(E) \otimes (\xi')^{-1} \otimes L) = \SA_\rho^{\xi'}(E,E_\bullet) \otimes \wt{\rho}(L)$$
\end{remark}

\section{3-birational maps between moduli spaces of parabolic bundles}
\label{section:automorphisms}

Now, we can engage the classification of $3$-birational maps between moduli of parabolic bundles of fixed degree. First of all, we can use the classification in the fixed determinant case to obtain the following.

\begin{lemma}
\label{lemma:basicTransFibres}
Let $(X,D)$ be a smooth complex projective curve of genus $g\ge 6$. Let $\Phi:\SM(r,\alpha,d)\dashrightarrow \SM(r,\alpha,d)$ be a $3$-birational map. Let $\SP\subset \Pic^d(X)$ be the open subset given by Corollary \ref{cor:openSubsets}. Then for each $\xi\in \SP$, there exists a basic transformation $T_\xi\in \ST_d$ such that
$$\Phi|_{\SM(r,\alpha,\xi)} = T_\xi$$
moreover, if $r=2$, then we can take $T\in \ST^+_d$.
\end{lemma}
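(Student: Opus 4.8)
The statement says that a $3$-birational self-map $\Phi$ of $\SM(r,\alpha,d)$ must restrict, on each fiber $\SM(r,\alpha,\xi)$ for $\xi$ in the open set $\SP$ of Corollary~\ref{cor:openSubsets}, to (the restriction of) a basic transformation $T_\xi\in\ST_d$. The idea is to combine the compatibility of $\Phi$ with the determinant map (Lemma~\ref{lemma:recoverDeterminant}, giving an isomorphism $\varphi:\Pic^d(X)\to\Pic^d(X)$ with $\det\circ\Phi=\varphi\circ\det$) with the known classification of $3$-birational maps between moduli spaces of parabolic bundles with \emph{fixed determinant} from \cite{AG19}.

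\begin{proof}[Proof sketch]
Let $\varphi:\Pic^d(X)\stackrel{\sim}{\to}\Pic^d(X)$ be the isomorphism produced by Lemma~\ref{lemma:recoverDeterminant}, so that $\det\circ\Phi=\varphi\circ\det$ wherever $\Phi$ is defined, and let $\SP\subset\Pic^d(X)$ together with $\SV$, $\SV_\xi=\SV\cap\det^{-1}(\xi)$ be as in Corollary~\ref{cor:openSubsets}. Fix $\xi\in\SP$ and set $\xi'=\varphi(\xi)$. By the construction of $\SP$, the subset $\SV_\xi\subset\SM(r,\alpha,\xi)$ is open with complement of codimension at least $3$, and $\Phi(\SV_\xi)=\SV'_{\xi'}\subset\SM(r,\alpha,\xi')$ is open with complement of codimension at least $3$; moreover $\Phi$ is an isomorphism $\SV_\xi\to\SV'_{\xi'}$ because $\Phi$ restricts to an isomorphism on $\SV$ and it respects the determinant. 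Hence $\Phi$ induces a $3$-birational map
$$\Phi_\xi:\SM(r,\alpha,\xi)\dashrightarrow\SM(r,\alpha,\xi').$$
Now apply the classification of $3$-birational maps between moduli of parabolic bundles with fixed determinant from \cite{AG19} (the relevant statement being that, for $g\ge 6$, every such map is induced by a basic transformation; for $r=2$ the basic transformation lies in $\ST^+$). This gives a basic transformation $T_\xi\in\ST$ — in $\ST^+$ when $r=2$ — such that $T_\xi$ induces $\Phi_\xi$ on a common open dense subset, and such that $T_\xi(\xi)\cong\xi'$. Since $\xi'=\varphi(\xi)\in\Pic^d(X)$, the degree of $T_\xi(\xi)$ equals $d$, which by the formula $T(d)=s(\deg(L)+d-|H|)$ forces $T_\xi(d)=d$, i.e.\ $T_\xi\in\ST_d$ (and $T_\xi\in\ST^+_d$ if $r=2$). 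Finally, $\Phi|_{\SM(r,\alpha,\xi)}$ and $T_\xi$ are two $3$-birational maps $\SM(r,\alpha,\xi)\dashrightarrow\SM(r,\alpha,\xi')$ that agree on a dense open subset; since $\SM(r,\alpha,\xi)$ is a separated variety and both are defined (as morphisms) on the complement of a codimension-$\ge 2$ subset of a smooth variety, they agree wherever both are defined, so $\Phi|_{\SM(r,\alpha,\xi)}=T_\xi$ as $3$-birational maps. This proves the claim.
\end{proof}

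\textbf{Main obstacle.} The subtle point is the passage from ``$\Phi$ is $3$-birational on the whole $\SM(r,\alpha,d)$'' to ``$\Phi$ restricts to a genuine $3$-birational map on each fiber $\SM(r,\alpha,\xi)$ for $\xi\in\SP$'': one needs the codimension bound on the complement to survive restriction to a generic fiber, and that both $\Phi_\xi$ and $\Phi_\xi^{-1}$ have this property simultaneously. This is exactly what Corollary~\ref{cor:openSubsets} is engineered to supply (via upper semicontinuity of fiber dimension applied to the complements $\SZ$, $\SZ'$ and equidimensionality of $\det$), so the real content is to invoke it correctly; after that the argument is a direct appeal to the fixed-determinant classification of \cite{AG19} together with the degree bookkeeping that places $T_\xi$ in $\ST_d$. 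A minor additional check is that the basic transformation extends over the codimension-$\ge 2$ locus so that the "agree on a dense open set $\Rightarrow$ agree as rational maps" step is legitimate; this is immediate since basic transformations are defined by global operations (pullback, tensorization, dualization, Hecke) on families and hence define morphisms on all of $\SM(r,\alpha,\xi)$ wherever the target stability holds.
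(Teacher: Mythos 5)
Your proposal is correct and follows essentially the same route as the paper: restrict $\Phi$ to the fibers $\SM(r,\alpha,\xi)$ for $\xi\in\SP$ using the codimension-$3$ control supplied by Corollary \ref{cor:openSubsets}, invoke the fixed-determinant classification of $3$-birational maps from \cite{AG19} to produce $T_\xi$ with $T_\xi(\xi)\cong\varphi(\xi)$, and conclude $T_\xi\in\ST_d$ from $\deg(\varphi(\xi))=d$. The extra checks you add (agreement of the two maps on a dense open set, the $r=2$ refinement) are consistent with, and slightly more explicit than, the paper's argument.
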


\begin{proof}
By Corollary \ref{cor:openSubsets}, there exist an isomorphism $\varphi: \Pic^d(X) \longrightarrow \Pic^d(X)$, an open subset $\SP\subset \Pic^d(X)$ and, for each $\xi\in \SP$, open subsets $\SV_\xi\subset \SM(r,\alpha,\xi)$ and $\SV'_{\varphi(\xi)}\subset \SM(r,\alpha,\varphi(\xi))$ such that for each $\xi\in \SP$
\begin{itemize}
\item $\Phi$ is defined over $\SV_\xi$
\item $\Phi(\SV_\xi)=\SV'_{\varphi(\xi)}$
\item The complements of $\SV_\xi$ and $\SV_{\varphi(\xi)}'$ in $\SM(r,\alpha,\xi)$ and $\SM(r,\alpha,\varphi(\xi))$ respectively have codimension is at least $3$.
\end{itemize}
Thus, $\Phi|_{\SV_\xi}:\SM(r,\alpha,\xi) \dashrightarrow \SM(r,\alpha,\varphi(\xi))$ is a $3$-birational map. Using \cite[Theorem 8.10]{AG19}, this map must be given by a basic transformation $T\in \ST$ which sends $T(\xi) \cong \varphi(\xi)$. As $\deg(\varphi(\xi))=\deg(\xi)=d$, then we conclude that $T\in \ST_d$.
\end{proof}

Given a point $(E,E_\bullet)\in \SM(r,\alpha,d)$, let $J(X)\cdot (E,E_\bullet)$ denote the orbit of $(E,E_\bullet)$ by the action of $J(X)\subset \ST_d$. Let us study its geometry and the image of this type of orbits  under $3$-birational self-maps of the moduli space.

\begin{lemma}
\label{lemma:JacobianOrbit}
Let $(X,D)$ be a smooth complex projective curve of genus $g\ge 4$ and let $\xi\in \Pic^d(X)$. For a generic parabolic vector bundle $(E,E_\bullet)\in \SM(r,\alpha,\xi)$ we have the following. For all basic transformations $T\in \ST_d$ the map
$$J_T: J(X) \longrightarrow \SM(r,\alpha,d)$$
given by
$$J_T(L)=T(E,E_\bullet) \otimes L$$
is well defined and it gives a bijection $J_T:J(X) \longrightarrow J(X) \cdot T(E,E_\bullet)$ such that there exists an open subset $U_T\subset J(X)$ for which
$$J_T: U_T \stackrel{\sim}{\longrightarrow} U_T\cdot T(E,E_\bullet)$$
is an isomorphism.
\end{lemma}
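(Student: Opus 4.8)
The plan is to show that for a generic $(E,E_\bullet)$ the assignment $J_T(L) = T(E,E_\bullet)\otimes L$ separates points (injectivity), and that it is generically an isomorphism onto its image. First, observe that $J_T$ being well defined is immediate: tensoring by $L\in J(X)$ preserves rank, degree and $\alpha$-stability, and $T$ maps $\alpha$-stable bundles of degree $d$ to $T(\alpha)$-stable bundles, but since $T\in \ST_d$ acts on the moduli space $\SM(r,\alpha,d)$ (degree preserved), the composition lands back in $\SM(r,\alpha,d)$; one should also note $J_T$ is a morphism of varieties, being induced by a family of parabolic bundles parametrized by $J(X)$.

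For injectivity, suppose $T(E,E_\bullet)\otimes L \cong T(E,E_\bullet)\otimes L'$ for $L,L'\in J(X)$. Setting $M = L\otimes (L')^{-1}$, this says $T(E,E_\bullet)\otimes M \cong T(E,E_\bullet)$, i.e. $T(E,E_\bullet)$ is fixed by the basic transformation $\ST_M = (\id,1,M,0)$. Now apply Lemma~\ref{lemma:basicFaithful} to the bundle $T(E,E_\bullet)$: for generic $(E,E_\bullet)$, the bundle $T(E,E_\bullet)$ is generic in $\SM(r,\alpha,T(\xi))$ (since $T$ restricts to an isomorphism between the relevant moduli spaces with fixed determinant, it sends a generic point to a generic point), so no nontrivial basic transformation in $\ST^+$ fixes it (and we only need the $\ST^+$ case here since $\ST_M\in \ST^+$). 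Hence $\ST_M = \id_\ST$, which forces $M\cong \SO_X$, i.e. $L\cong L'$. This gives the bijection $J_T: J(X)\to J(X)\cdot T(E,E_\bullet)$. To handle the genericity uniformly over all $T\in \ST_d$ at once one must take a single generic $(E,E_\bullet)$ working for all $T$; since $\ST_d$ need not be finite, one should instead fix genericity relative to $\xi$ via Lemma~\ref{lemma:basicFaithful} applied at $\xi$, and then argue that for $(E,E_\bullet)$ generic in $\SM(r,\alpha,\xi)$ and \emph{any} $T\in\ST_d$, the image $T(E,E_\bullet)$ avoids the (bad) locus: here one uses that $T$ is an isomorphism onto $\SM(r,\alpha,T(\xi))$ and that the faithfulness locus is defined by the nonvanishing of a finite list of conditions — equivalently, one applies Lemma~\ref{lemma:basicFaithful} directly to $T(E,E_\bullet)$ which lies in a fixed-determinant moduli space.

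For the final assertion — that $J_T$ restricts to an isomorphism onto an open subset — the idea is that $J_T$ is a morphism from the smooth projective variety $J(X)$ which is injective on points (for generic $(E,E_\bullet)$, as just shown); by generic smoothness / generic flatness, there is an open dense $U_T\subset J(X)$ on which $J_T$ is an immersion with image an open subset $U_T\cdot T(E,E_\bullet)$ of the orbit, and injectivity plus this immersion property on a normal target makes it an isomorphism onto the image. More precisely: the orbit $J(X)\cdot T(E,E_\bullet)$ is the image of the proper morphism $J_T$, hence a closed irreducible subvariety of $\SM(r,\alpha,d)$ of dimension $\le \dim J(X) = g$; injectivity forces the generic fiber to be a single reduced point, so $\dim(J(X)\cdot T(E,E_\bullet)) = g$ and $J_T$ is generically injective and dominant onto its image; since $J(X)$ is smooth, $J_T$ is generically an isomorphism onto an open subset of its (normalized) image, and one checks the image is already normal along the generic point, or simply passes to the open locus $U_T$ where $dJ_T$ is an isomorphism. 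The main obstacle is the uniformity of the genericity statement over the possibly-infinite family $\ST_d$: one must phrase "generic $(E,E_\bullet)$" so that a single open dense subset of $\SM(r,\alpha,\xi)$ works for every $T\in\ST_d$ simultaneously, which is why the statement is pinned to the fixed determinant $\xi$ and one leans on the fact that each $T$ maps $\SM(r,\alpha,\xi)$ isomorphically onto another fixed-determinant moduli space where Lemma~\ref{lemma:basicFaithful} already provides the needed generic behaviour.
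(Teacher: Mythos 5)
Your overall architecture matches the paper's (injectivity via the faithfulness lemma, then a bijective-morphism-is-an-isomorphism argument on a smooth locus of the orbit), but there are two genuine gaps. First, well-definedness is \emph{not} immediate: a transformation $T\in\ST_d$ preserves the degree but not the stability chamber, so $T(E,E_\bullet)$ is $T(\alpha)$-stable rather than $\alpha$-stable and a priori need not define a point of $\SM(r,\alpha,d)$ at all. Your assertion that ``$T\in\ST_d$ acts on the moduli space $\SM(r,\alpha,d)$'' is exactly what fails in general (only $\ST_{d,\alpha}$ acts by global automorphisms; a general $T\in\ST_d$ gives only a $3$-birational self-map), and this is one of the two reasons the lemma requires $(E,E_\bullet)$ to be generic. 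The paper repairs this by intersecting with the locus $\SM^{\op{us}}(r,\xi)$ of bundles that are $\alpha'$-stable for \emph{every} generic weight system $\alpha'$, which is open and dense by \cite[Corollary 10.4]{AG19}; a bundle in that locus is $T^{-1}(\alpha)$-stable for every $T\in\ST_d$, so $T(E,E_\bullet)\otimes L$ does land in $\SM(r,\alpha,d)$.

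Second, your injectivity argument applies Lemma \ref{lemma:basicFaithful} to $T(E,E_\bullet)$ inside a fixed-determinant moduli space over $T(\xi)$ and asserts that ``$T$ sends a generic point to a generic point.'' That inference is not valid: the faithfulness lemma produces, for each determinant, \emph{some} dense open subset, and the isomorphism $T$ (which in any case lands in $\SM(r,T(\alpha),T(\xi))$, not $\SM(r,\alpha,T(\xi))$) has no reason to carry the good locus over $\xi$ into the good locus over $T(\xi)$; moreover, as $T$ ranges over the infinite set $\ST_d$ the determinants $T(\xi)$ sweep out infinitely many points of $\Pic^d(X)$, so you cannot salvage the argument by intersecting preimages of bad loci either. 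You correctly flag this uniformity problem, but the escape you propose does not close it. The short fix, and the one the paper uses, is to conjugate: $T(E,E_\bullet)\otimes M\cong T(E,E_\bullet)$ is equivalent to $(T^{-1}\circ\ST_M\circ T)(E,E_\bullet)\cong(E,E_\bullet)$, and by the composition rules of Lemma \ref{lemma:compositionRules} the conjugate $T^{-1}\circ\ST_M\circ T$ is again of the form $\ST_{M'}$, lies in $\ST^+$, and is trivial if and only if $M\cong\SO_X$. A single application of Lemma \ref{lemma:basicFaithful} at the fixed determinant $\xi$ then yields injectivity for all $T\in\ST_d$ simultaneously. With these two repairs your final step (a bijective morphism onto the smooth part of the orbit is an isomorphism, by Zariski's main theorem) is essentially the paper's.
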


\begin{proof}
Let $\SU\subset \SM(r,\alpha,\xi)$ be the open subset of parabolic vector bundles which are not fixed by any $T\in \ST$ (or $T\in \ST^+$ if $r=2$) given by Lemma \ref{lemma:basicFaithful}. Let $\SM^{\op{us}}(r,\xi) \subset \SM(r,\alpha,\xi)$ be the open subset corresponding to parabolic vector bundles which are $\alpha'$-stable for every generic system of weights $\alpha'$ on $(X,D)$. This open subset exists by \cite[Corollary 10.4]{AG19}. Take $\SU'=\SU\cap \SM^{\op{us}}(r,\xi)$. Then for each $(E,E_\bullet)\in \SU'$ and each $T\in \ST_d$, $(E,E_\bullet)$ is $T^{-1}(\alpha)$-stable, so $T(E,E_\bullet)\in \SM(r,\alpha,d)$. Moreover, $T(E,E_\bullet)\not\cong (E,E_\bullet)$ for all $T\in \ST$ if $r>2$ or for all $T\in \ST^+$ if $r=2$. Then let us prove that for each $T\in \ST_d$ and $L,L'\in J(X)$ with $L\ne L'$,
$$T(E,E_\bullet)\otimes L\not\cong T(E,E_\bullet)\otimes L'$$
Assume the opposite. Suppose that $T(E,E_\bullet)\otimes L \cong T(E,E_\bullet)\otimes L'$. Tensoring with $(L')^{-1}$ and composing with $T^{-1}$ yields
$$(T^{-1} \circ \ST_{L\otimes (L')^{-1}} \circ T)(E,E_\bullet) \cong (E,E_\bullet)$$
independently on the choice of $T\in \ST_d$, it is clear that $T^{-1} \circ \ST_{L\otimes (L')^{-1}} \circ T\in \ST_d^+$, and, by Lemma \ref{lemma:basicFaithful}, $(E,E_\bullet)$ is not fixed by any nontrivial such transformation, so $T^{-1}\circ \ST_{L\otimes (L')^{-1}} \circ T=\id$, but this is impossible, as composing with $T$ on the left and $T^{-1}$ on the right yields $\ST_{L\otimes (L')^{-1}}=\id$, which implies $L\otimes (L')^{-1}=\SO_X$, i.e., $L=L'$. Therefore, the map $J_T:J(X)\longrightarrow \SM(r,\alpha,d)$ is a bijection with its image, which is $J(X)\cdot T(E,E_\bullet)$ by construction.

Let $(J(X)\cdot T(E,E_\bullet))^{\op{sm}}$ denote the smooth part of the variety $J(X)\cdot T(E,E_\bullet)$. Let
$$U_T=J_T^{-1}\left((J(X)\cdot T(E,E_\bullet))^{\op{sm}} \right)$$
since $J_T$ is a bijection,
$$J_T(U_T)=U_T\cdot T(E,E_\bullet)=(J(X)\cdot T(E,E_\bullet))^{\op{sm}}$$
by construction, $J_T$ gives a bijective algebraic morphism between two smooth complex algebraic varieties, $U_T$ and $J_T(U_T)$. As a consequence of Zariski's main Theorem \cite[IV Corollary 8.12.13]{EGAIV}, it must be an isomorphism between them (c.f. \cite[Remark 8.21]{Milne09}).
\end{proof}

\begin{lemma}
Suppose that $g\ge 2$. Let $(E,E_\bullet)$ be a generic point in $\SM(r,\alpha,d)$. Then there exists a finite subset $R\subset \ST_d$ such that
$$\bigcup_{T\in \ST_d} J(X) \cdot T(E,E_\bullet)=\coprod_{T\in R} J(X) \cdot T(E,E_\bullet)$$
\end{lemma}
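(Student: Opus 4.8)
The goal is to show that, for generic $(E,E_\bullet)$, the union of Jacobian orbits $\bigcup_{T\in\ST_d} J(X)\cdot T(E,E_\bullet)$ collapses to a \emph{finite} disjoint union of such orbits. The natural approach is to introduce an equivalence relation on $\ST_d$: say $T\sim T'$ if $J(X)\cdot T(E,E_\bullet)=J(X)\cdot T'(E,E_\bullet)$, i.e., if $T'(E,E_\bullet)\cong T(E,E_\bullet)\otimes L$ for some $L\in J(X)$. The statement to prove is then equivalent to the claim that $\ST_d/\!\sim$ is finite (with $R$ a set of representatives, made disjoint by discarding duplicate orbits), so the whole problem reduces to bounding the number of equivalence classes.

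\textbf{Key steps.} First I would rewrite the relation $T\sim T'$ in purely group-theoretic terms. If $T'(E,E_\bullet)\cong T(E,E_\bullet)\otimes L=(\ST_L\circ T)(E,E_\bullet)$, then $(T'^{-1}\circ \ST_L\circ T)(E,E_\bullet)\cong(E,E_\bullet)$; invoking Lemma \ref{lemma:basicFaithful} (applied on the fixed-determinant fiber $\SM(r,\alpha,\xi)$ through $(E,E_\bullet)$, as in the proof of Lemma \ref{lemma:JacobianOrbit}) for generic $(E,E_\bullet)$ this forces $T'^{-1}\circ\ST_L\circ T=\id$ when $r>2$, hence $T'=\ST_L\circ T$; when $r=2$ one gets the same conclusion provided the relevant transformation lies in $\ST^+$, which it does since $\ST_L\in\ST^+$ and conjugation by $\ST_d$ preserves the sign $s$. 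Here I need the genericity of $(E,E_\bullet)$ to be taken over the \emph{whole} moduli space $\SM(r,\alpha,d)$, not just one fiber: a generic point of $\SM(r,\alpha,d)$ lies in a generic fiber $\SM(r,\alpha,\xi)$ and within it is a generic point, so Lemma \ref{lemma:basicFaithful} applies. Thus $T\sim T'$ if and only if $T'\in \ST_L\cdot T$ for some $L\in J(X)$, i.e.\ the equivalence classes are exactly the left cosets of the subgroup $\{\ST_L : L\in J(X)\}\cong J(X)$ inside $\ST_d$ (using composition rule (3), $\ST_L\circ\ST_M=\ST_{L\otimes M}$, so these transformations form a subgroup).

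\textbf{Finishing the count.} It remains to show that $J(X)$ (as the subgroup of tensorizations by degree-zero line bundles) has finite index in $\ST_d$. Using the presentation in Lemma \ref{lemma:compositionRules}, every $T\in\ST_d$ has the form $T=(\sigma,s,L,H)$ with $\sigma\in\Aut(X)$ (finite set, since $g\ge 2$), $s=\pm1$, $H$ a divisor with $0\le H\le(r-1)D$ (finitely many), and $L\in\Pic(X)$. Modulo the subgroup of $\ST_L$'s with $L\in J(X)$ one may adjust the $\Pic(X)$-component of $L$ freely within its degree class; the condition $T\in\ST_d$, i.e.\ $s(\deg L+d-|H|)=d$, pins down $\deg L$ once $s$ and $H$ are chosen. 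Hence each coset of $J(X)$ in $\ST_d$ is determined by the finite data $(\sigma,s,H)$ together with the residual discrete choice (if any) of the $\Pic(X)$-component modulo $J(X)$ and degree, which is a single value — so $[\ST_d : J(X)]<\infty$. Choosing one representative $T$ per coset, and then deleting any representative whose orbit coincides with an earlier one (there are none, by the coset description, so the union is already disjoint), gives the desired finite set $R\subset\ST_d$ with $\bigcup_{T\in\ST_d}J(X)\cdot T(E,E_\bullet)=\coprod_{T\in R}J(X)\cdot T(E,E_\bullet)$.

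\textbf{Main obstacle.} The delicate point is the genericity bookkeeping in the first step: Lemma \ref{lemma:basicFaithful} provides, for \emph{each fixed} $\xi$, a dense open $\SU\subset\SM(r,\alpha,\xi)$ avoiding fixed points of basic transformations, but here I need a single dense open subset of $\SM(r,\alpha,d)$ that works for all $T$ simultaneously. Since $\ST_d$ is countable but perhaps not finite a priori, I must be careful: either intersect the (countably many) loci and argue density of the complement, or — cleaner — first establish the coset description of $\sim$ using only that $[\ST_d:J(X)]$ is finite, so that only finitely many $T$'s (the coset representatives and the finitely many transformations conjugating them) are ever relevant, and then a finite intersection of dense opens suffices. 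I would adopt the latter route, proving the finite-index statement \emph{before} invoking genericity, which avoids any countable-intersection subtlety.
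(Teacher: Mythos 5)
Your argument reaches the right conclusion, and its essential content --- the finite-index count showing that every $T\in\ST_d$ can be written as $\ST_L\circ T_0$ with $L\in J(X)$ and $T_0$ ranging over a finite set determined by $(\sigma,s,H)$ and the degree constraint --- is exactly the paper's argument. Where you diverge is in trying to characterize \emph{precisely} when two orbits $J(X)\cdot T(E,E_\bullet)$ and $J(X)\cdot T'(E,E_\bullet)$ coincide, via Lemma \ref{lemma:basicFaithful}. That step is unnecessary: distinct orbits of a group action are automatically disjoint, so once the union is known to run over a finite set $S'$ of transformations, one simply discards those $T\in S'$ whose orbit repeats an earlier one; no genericity or faithfulness input is needed for disjointness, and this is all the paper does. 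Your detour also contains a genuine error in the $r=2$ case: the composite $T'^{-1}\circ\ST_L\circ T$ is not a conjugate of $\ST_L$ unless $T=T'$, and if $T\in\ST^+$ while $T'\in\ST^-$ (or vice versa) it lies in $\ST^-$, where Lemma \ref{lemma:basicFaithful} gives nothing for $r=2$; so the equivalence classes of $\sim$ need not be exactly the $J(X)$-cosets. Because you retain the discard-duplicates step this does not invalidate your conclusion (only the parenthetical claim that no duplicates occur fails), but the coset description of $\sim$ should not be asserted. Relatedly, the genericity bookkeeping you worry about in the last paragraph evaporates entirely once the orbit-coincidence characterization is dropped; the only genericity genuinely needed here is that $(E,E_\bullet)$ be universally stable, so that $T(E,E_\bullet)$ defines a point of $\SM(r,\alpha,d)$ for every $T\in\ST_d$.
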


\begin{proof}
Let
$$S=\{T=(\sigma,s,\SO_X,H)\in \ST \, | \, T(d)=s(d-|H|) \cong d \pmod{r}\}\subset \ST$$
As there is only a finite number of choices for $\sigma$, $s$ and $H$, $S$ is a finite set. For each $(\sigma,s,\SO_X,H)\in S$, fix a line bundle $L_{\sigma,s,H}$ of degree $(d-s(d-|H|))/r$ (which is an integer by construction). A direct computation then shows that $\ST_L\circ (\sigma,s,\SO_X,H)\in \ST_d$. Let us consider the finite subset
$$S'=\{ \ST_{L_{\sigma,s,H}}\circ (\sigma,s,\SO_X,H) \, |\,  (\sigma,s,\SO_X,H)\in S\} \subset \ST_d$$
Now, let $T\in \ST_d$ be any basic transformation which preserves the degree $d$. Using the relations from Lemma \ref{lemma:compositionRules}, we can write $T$ as $T=\ST_L\circ (\sigma,s,\SO_X,H)$ for some $L\in \Pic(X)$, $\sigma\in \Aut(X)$, $s=\pm 1$ and $0\le H\le (r-1)D$. As
$$d=(\ST_L\circ (\sigma,s,\SO_X,H))(d)=r\deg(L)+s(d-|H|) \cong s(d-|H|) \pmod{r}$$
we have that $(\sigma,s,\SO_X,H)\in S$. Moreover, $\deg(L)=s(d-|H|)/r=\deg(L_{\sigma,s,H})$ so there exists $F\in J(X)$  such that $L=F\otimes L_{\sigma,s,H}$. Thus
$$T(E,E_\bullet)=F\otimes (\ST_{L_{\sigma,s,H}}\circ (\sigma,s,\SO_X,H))(E,E_\bullet) \in J(X) \cdot (\ST_{L_{\sigma,s,H}} \circ (\sigma,s,\SO_X,H))(E,E_\bullet)$$
Therefore, we conclude that
$$\bigcup_{T\in \ST_d} J(X) \cdot T(E,E_\bullet)=\bigcup_{T\in S'} J(X) \cdot T(E,E_\bullet)$$
Finally, observe that the right hand side of the previous equality is a finite union of orbits of $J(X)$ in $\SM(r,\alpha,d)$. As any two orbits either coincide or are disjoint, then there is a subset $R\subset S'$, which is also finite and satisfies that
$$\bigcup_{T\in S'} J(X) \cdot T(E,E_\bullet)=\coprod_{T\in R} J(X) \cdot T(E,E_\bullet)$$
\end{proof}

\begin{lemma}
\label{lemma:JacobianOrbitsFixed}
Let $(X,D)$ be a smooth complex projective curve of genus $g\ge 6$. Let $\Phi:\SM(r,\alpha,d)\dashrightarrow \SM(r,\alpha,d)$ be a $3$-birational map. For a generic $\xi\in \Pic^d(X)$ and a generic $(E,E_\bullet)\in \SM(r,\alpha,\xi)$ there exist open subsets $\tilde{\SP},\tilde{\SP}'\subset J(X)$ and a basic transformation $T\in \ST_d$ such that
\begin{itemize}
\item $J_{\id}$ and $J_T$ induce isomorphisms $\tilde{\SP}\stackrel{J_{\id}}{\cong} \tilde{\SP}\cdot (E,E_\bullet)$, $\tilde{\SP}'\stackrel{J_T}{\cong} \tilde{\SP}'\cdot T(E,E_\bullet)$,
\item $\Phi$ is defined over $\tilde{\SP}\cdot (E,E_\bullet)$,
\item $\Phi(\tilde{\SP}\cdot (E,E_\bullet))=\tilde{\SP}'\cdot T(E,E_\bullet)$, and
\item if $r=2$, $T\in \ST_d^+$.
\end{itemize}
\end{lemma}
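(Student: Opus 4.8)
The plan is to combine the fibrewise description $\Phi|_{\SM(r,\alpha,\zeta)}=T_\zeta$ with $T_\zeta\in\ST_d$ coming from Lemma \ref{lemma:basicTransFibres} with the rigidity of Jacobian orbits supplied by Lemma \ref{lemma:JacobianOrbit} and the finiteness lemma preceding this one. Let $\SP$, $\SU$ and, for $\zeta\in\SP$, $\SV_\zeta=\SU\cap\det^{-1}(\zeta)$ be as in Corollary \ref{cor:openSubsets}, and set $\SV=\SU\cap\det^{-1}(\SP)$. Fix a generic $\xi\in\SP$ and a generic $(E,E_\bullet)\in\SM(r,\alpha,\xi)$, chosen to lie in $\SV_\xi$, in the generic locus to which Lemma \ref{lemma:JacobianOrbit} applies (simultaneously for all $T\in\ST_d$), and in the generic locus of the finiteness lemma; write $\Omega=J(X)\cdot(E,E_\bullet)$. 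By Lemma \ref{lemma:JacobianOrbit}, $J_{\id}$ is injective, and since $J(X)$ is projective its image $\Omega$ is an irreducible closed subvariety of $\SM(r,\alpha,d)$ of dimension $g=\dim J(X)$; moreover $\det|_\Omega\colon\Omega\to\Pic^d(X)$ is surjective, and $\SV\cap\Omega$ is a nonempty, hence dense, open subset of $\Omega$ containing $(E,E_\bullet)$. Comparing dimensions, $\det(\SV\cap\Omega)$ is dense in $\Pic^d(X)$.

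The key step is to identify $\Phi(\SV\cap\Omega)$ with a single Jacobian orbit. A direct computation with the exact sequences defining the Hecke transformation, the pullback and the dual (Section \ref{section:basicTransformations}) shows that every basic transformation $T=(\sigma,s,L_0,H)$ commutes with tensorisation, $T\bigl((E,E_\bullet)\otimes L\bigr)\cong T(E,E_\bullet)\otimes(\sigma^{*}L)^{s}$, so $T$ carries $\Omega$ onto $J(X)\cdot T(E,E_\bullet)$. Now for $(E',E'_\bullet)\in\SV\cap\Omega$ lying over $\zeta=\det(E')\in\SP$ we have $(E',E'_\bullet)\in\SV_\zeta$, hence $\Phi(E',E'_\bullet)=T_\zeta(E',E'_\bullet)\in J(X)\cdot T_\zeta(E,E_\bullet)$ by Lemma \ref{lemma:basicTransFibres}. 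Thus $\Phi(\SV\cap\Omega)\subseteq\bigcup_{T\in\ST_d}J(X)\cdot T(E,E_\bullet)$, which by the finiteness lemma is a finite disjoint union $\coprod_{T\in R}J(X)\cdot T(E,E_\bullet)$ of irreducible subvarieties, each of dimension $g$ and closed (the morphism $J_T$ has projective source). Being the image of the irreducible set $\SV\cap\Omega$, $\Phi(\SV\cap\Omega)$ is irreducible and so lies in a single member $J(X)\cdot T_0(E,E_\bullet)$. Choosing $\zeta\in\SP\cap\det(\SV\cap\Omega)$, a point of $\SV_\zeta\cap\Omega$ maps under $\Phi$ into $J(X)\cdot T_\zeta(E,E_\bullet)\cap J(X)\cdot T_0(E,E_\bullet)$, forcing these orbits to coincide; we take $T:=T_\zeta$, so $T\in\ST_d$, and $T\in\ST_d^{+}$ when $r=2$ by Lemma \ref{lemma:basicTransFibres}.

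It remains to extract the open subsets. Put $\SW=\Omega\cap\SU$, dense open in $\Omega$ and closed in $\SU$; since $\Phi$ restricts to an isomorphism on $\SU$, the set $\Phi(\SW)$ is closed in $\SU'$, irreducible of dimension $g$, and contains the dense subset $\Phi(\SV\cap\Omega)$ of $J(X)\cdot T(E,E_\bullet)$, so $\Phi(\SW)=J(X)\cdot T(E,E_\bullet)\cap\SU'$ is dense open in $J(X)\cdot T(E,E_\bullet)$. Shrinking to the loci of Lemma \ref{lemma:JacobianOrbit}, set
\[
A'=\SW\cap\bigl(U_{\id}\cdot(E,E_\bullet)\bigr)\cap\Phi^{-1}\bigl(U_{T}\cdot T(E,E_\bullet)\bigr),\qquad B'=\Phi(A').
\]
Then $A'$ is dense open in $\Omega$ with $A'\subseteq U_{\id}\cdot(E,E_\bullet)$, and $B'$ is dense open in $J(X)\cdot T(E,E_\bullet)$ with $B'\subseteq U_{T}\cdot T(E,E_\bullet)$. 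With $\tilde{\SP}=J_{\id}^{-1}(A')\subseteq U_{\id}$ and $\tilde{\SP}'=J_{T}^{-1}(B')\subseteq U_{T}$, Lemma \ref{lemma:JacobianOrbit} gives isomorphisms $\tilde{\SP}\xrightarrow{\sim}\tilde{\SP}\cdot(E,E_\bullet)=A'$ and $\tilde{\SP}'\xrightarrow{\sim}\tilde{\SP}'\cdot T(E,E_\bullet)=B'$; moreover $A'\subseteq\SW\subseteq\SU$, so $\Phi$ is defined on $\tilde{\SP}\cdot(E,E_\bullet)$ and $\Phi(\tilde{\SP}\cdot(E,E_\bullet))=B'=\tilde{\SP}'\cdot T(E,E_\bullet)$, which is the assertion.

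I expect the key step to be the main obstacle: the codimension-$\ge 3$ control of Corollary \ref{cor:openSubsets} lives inside the fibres of $\det$, whereas $\Omega$ meets each such fibre only in a finite set of points, at which $\Phi$ need not a priori coincide with any $T_\zeta$. The way around this is to argue along $\SV$ rather than along individual fibres (so that $\SV\cap\Omega$ is already dense in $\Omega$), to invoke the finiteness lemma to stop the images of the fibrewise pieces from spreading over infinitely many orbits, and to use the projectivity of $J(X)$ to upgrade ``irreducible image inside a finite union of closed orbits'' to ``image dense in one orbit''. A subsidiary point needing care is the genericity bookkeeping, namely checking that lying in $\SV_\xi$, in the domains of Lemma \ref{lemma:JacobianOrbit}, and in the generic locus of the finiteness lemma are open dense conditions that can be imposed simultaneously for $\xi$ and $(E,E_\bullet)$ generic.
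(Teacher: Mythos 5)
Your proof is correct and follows essentially the same route as the paper's: restrict $\Phi$ to the part of the orbit $J(X)\cdot(E,E_\bullet)$ lying over $\SP$, use Lemma \ref{lemma:basicTransFibres} fibrewise together with the commutation of basic transformations with tensorisation to trap the image inside the finite disjoint union of orbits from the preceding lemma, use irreducibility to single out one orbit, and then shrink via Lemma \ref{lemma:JacobianOrbit} to get the two isomorphisms. The only differences are cosmetic bookkeeping — you additionally record that the orbits are closed (projectivity of $J(X)$), whereas the paper gets by with their being irreducible constructible sets of dimension $g$.
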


\begin{proof}
Let $\SU,\SU' \subset \SM(r,\alpha,d)$ and $\SP,\SP'\subset \Pic^d(X)$ be the open subsets given by Corollary \ref{cor:openSubsets}. Let $\xi\in \SP$ and let $(E,E_\bullet)\in \SM(r,\alpha,\xi)\cap \SU$ be a generic element in the sense of Lemma \ref{lemma:JacobianOrbit}. By Lemma \ref{lemma:JacobianOrbit}, there exists an open subset $U=U_{\id}\subset J(X)$ such that $U\cong J_{\id}(U)\subset J(X)\cdot (E,E_\bullet)$. Let $V=U\cap J_{\id}^{-1}(\SU)$. Then $\Phi$ is well defined on $V\cdot (E,E_\bullet)$. Let us start by proving that
\begin{equation}
\label{eq:unionOrbits}
\Phi(V \cdot (E,E_\bullet)) \subset \bigcup_{T\in \ST_d} J(X)\cdot T(E,E_\bullet)
\end{equation}
By construction, for each $L\in V$, $\xi\otimes L^r=\det((E,E_\bullet)\otimes L) \in \SP$. Thus, by Lemma \ref{lemma:basicTransFibres} there exists a basic transformation $T_{\xi\otimes L^r}\in \ST_d$ such that
$$\Phi((E,E_\bullet)\otimes L)\cong T_{\xi\otimes L^r}((E,E_\bullet)\otimes L)$$
On the other hand, for each $T\in \ST_d$,
$$T((E,E_\bullet)\otimes L) = (T\circ \ST_L)(E,E_\bullet)$$
As $L\in J(X)$, then $T'=(T\circ \ST_L)\in \ST_d$, so $T((E,E_\bullet)\otimes L)=T'(E,E_\bullet)\in J(X)\cdot T'(E,E_\bullet)$. If $r=2$, then $T_{\xi\otimes L^r}\in \ST_d^+$, so it is enough to take the union in equation \eqref{eq:unionOrbits} over $T\in \ST_d^+$

By the previous Lemma, $\Phi(V\cdot (E,E_\bullet))$ is covered by a disjoint finite union of orbits of $J(X)\cdot T(E,E_\bullet)$, which are irreducible constructible sets. Thus, there must exist some $T\in \ST_d$ such that $\Phi(V\cdot (E,E_\bullet))\cap J(X)\cdot T(E,E_\bullet)$ is open and dense in $J(X)\cdot T(E,E_\bullet)$. Let $U_T\subset J(X)$ be the open subset such that $J_T: U_T \longrightarrow U_T\cdot T(E,E_\bullet)$ is an isomorphism. Let
$$\tilde{\SP}=J_{\id}^{-1}(\Phi^{-1}(U_T\cdot (E,E_\bullet) \cap \Phi(V\cdot (E,E_\bullet))))$$
$$\tilde{\SP'}=J_T^{-1}(\Phi(\tilde{\SP}\cdot (E,E_\bullet))) \subset J_T^{-1}(U_T\cdot (E,E_\bullet))=U_T$$
Then, by construction, $J_{\id}:\tilde{\SP} \longrightarrow \tilde{\SP}\cdot (E,E_\bullet)$ and $J_T:\tilde{\SP}' \longrightarrow \tilde{\SP}' \cdot T(E,E_\bullet)$ are isomorphisms and $\Phi(\tilde{\SP}\cdot (E,E_\bullet))=\tilde{\SP}'\cdot T(E,E_\bullet)$.
\end{proof}

The previous Lemma bounds the possible images of the orbits of the Jacobian, and provides the rigidity result needed to obtain the following classification result and prove the main Theorem.

\begin{lemma}
\label{lemma:recoverAction}
Let $(X,D)$ be a smooth complex projective curve of genus $g\ge 6$. Let $\Phi:\SM(r,\alpha,d)\dashrightarrow \SM(r,\alpha,d)$ be a $3$-birational map. Then there exist
\begin{itemize}
\item an open subset $\SU\subset \SM(r,\alpha,d)$ where $\Phi$ is defined,
\item a basic transformation $T\in \ST_d$, which, moreover, can be chosen in $\ST_d^+$ if $r=2$,
\item a line bundle $\xi\in \Pic^d(X)$, and
\item an automorphism $\rho\in \Aut(J(X),J(X)[r])$,
\end{itemize}
such that for each $(E,E_\bullet)\in \SU$
$$\Phi(E,E_\bullet)\cong T(\SA_\rho^\xi(E,E_\bullet))$$
\end{lemma}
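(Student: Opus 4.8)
The plan is to combine the fiberwise description from Lemma \ref{lemma:basicTransFibres} with the rigidity of Jacobian orbits from Lemma \ref{lemma:JacobianOrbitsFixed} to show that the family of basic transformations $\{T_\xi\}_{\xi\in\SP}$ realizing $\Phi$ on the fibers of $\det$ can be ``glued'' into a single transformation of the form $T\circ \SA_\rho^\xi$. First I would fix a generic $\xi_0\in\SP$ and a generic $(E,E_\bullet)\in\SM(r,\alpha,\xi_0)$ in the sense of Lemmas \ref{lemma:basicFaithful}, \ref{lemma:JacobianOrbit} and \ref{lemma:JacobianOrbitsFixed}, and extract from Lemma \ref{lemma:JacobianOrbitsFixed} the open sets $\tilde{\SP},\tilde{\SP}'\subset J(X)$ and the single basic transformation $T\in\ST_d$ (in $\ST_d^+$ if $r=2$) with $\Phi(\tilde{\SP}\cdot(E,E_\bullet))=\tilde{\SP}'\cdot T(E,E_\bullet)$. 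Since $J_{\id}$ and $J_T$ are isomorphisms onto their images, $\Phi$ restricted to $\tilde{\SP}\cdot(E,E_\bullet)$ is transported to an isomorphism $\psi=J_T^{-1}\circ\Phi\circ J_{\id}:\tilde{\SP}\to\tilde{\SP}'$ between open subsets of $J(X)$, which extends to an automorphism $\psi$ of the abelian variety $J(X)$ by \cite[Theorem 3.8]{Milne08}. Thus on this orbit $\Phi((E,E_\bullet)\otimes L)\cong T(E,E_\bullet)\otimes\psi(L)$ for $L$ in a dense open subset of $J(X)$.

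Next I would pin down $\psi$. On one hand, by Lemma \ref{lemma:basicTransFibres}, for $L$ generic the restriction of $\Phi$ to $\SM(r,\alpha,\xi_0\otimes L^r)$ is given by a basic transformation $T_{\xi_0\otimes L^r}\in\ST_d$, so $\Phi((E,E_\bullet)\otimes L)\cong T_{\xi_0\otimes L^r}(E,E_\bullet)\otimes (\text{some }J(X)\text{-shift})$; comparing determinants, $\varphi(\xi_0\otimes L^r)=T(\xi_0)\otimes\psi(L)^r$, while Lemma \ref{lemma:recoverDeterminant} forces $\varphi$ (an automorphism of $\Pic^d(X)$, hence of the form translation composed with a group automorphism $\varphi_0$ of $J(X)$) to satisfy $\varphi_0(L^r)=\psi(L)^r$ for all $L$, i.e.\ $r\varphi_0=r\psi$ as endomorphisms of $J(X)$; since $J(X)$ is divisible and torsion-free-quotient considerations apply, this gives $\varphi_0=\psi$. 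In particular $\psi$ does not depend on the choice of generic $(E,E_\bullet)$ inside the fixed fiber. On the other hand, comparing $\Phi$ on the orbit through $(E,E_\bullet)$ with the fiberwise description forces $T_{\xi_0\otimes L^r}$ to be $\ST_L$-conjugate to a fixed transformation for $L$ in a dense set; more precisely, absorbing $\psi$ into a composition of $T$ with the automorphism $\SA_\rho^\xi$ requires choosing $\rho\in\Aut(J(X),J(X)[r])$ with $\widetilde{\rho}$ matching $\psi-\id$ up to the factor $r$. The key point is that $\psi$ must actually lie in $\id+r\cdot\End(J(X))$: this follows because $\Phi$ permutes the fibers of $\det$ compatibly with the $\ST_d$-action and each $T_{\xi_0\otimes L^r}\in\ST_d$ acts on $\Pic$ by $L\mapsto T_{\xi_0\otimes L^r}(\xi_0\otimes L^r)$ with $r$-th power behavior, which upon reducing mod $r$-torsion pins $\psi\equiv\id$ on $J(X)[r]$; then set $\rho=\psi$, so $\widetilde{\rho}$ is well-defined and $\SA_\rho^\xi$ (with $\xi$ chosen appropriately, using Remark \ref{rmk:changexi} to normalize) acts on $(E,E_\bullet)\otimes L$ by tensoring with $\widetilde{\rho}(L^r)$, reproducing $\psi(L)=L+r\widetilde{\rho}(L)$.

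Finally I would promote this from the single orbit to an open dense subset $\SU$ of $\SM(r,\alpha,d)$. The two self-maps $\Phi$ and $(E,E_\bullet)\mapsto T(\SA_\rho^\xi(E,E_\bullet))$ of $\SM(r,\alpha,d)$ are $3$-birational maps which agree, by the construction above, on the dense subset $\tilde{\SP}\cdot(E,E_\bullet)$ of one Jacobian orbit; since the Jacobian orbits through generic points sweep out a dense subset of the moduli space and the fiberwise identification of Lemma \ref{lemma:basicTransFibres} is available on all of $\SP$, the same argument applied to generic $(E,E_\bullet)$ in each fiber shows that the composite $(T\circ\SA_\rho^\xi)^{-1}\circ\Phi$ restricts to the identity on a dense subset of $\SM(r,\alpha,\xi)$ for generic $\xi$, hence on a dense open $\SU\subset\SM(r,\alpha,d)$ where both maps are defined, giving $\Phi(E,E_\bullet)\cong T(\SA_\rho^\xi(E,E_\bullet))$ there. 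The main obstacle I anticipate is the consistency step: showing that the single transformation $T$ and the single $\psi$ extracted from one orbit genuinely control $\Phi$ on \emph{all} generic fibers simultaneously, i.e.\ that the $T_\xi$ of Lemma \ref{lemma:basicTransFibres} cannot vary with $\xi$ beyond the $\ST_L$-conjugation that $\SA_\rho^\xi$ already accounts for — this is where the finiteness of $\ST_\xi$ (Lemma \ref{lemma:basicTransFinite}) and the disjointness of the finitely many orbits $J(X)\cdot T(E,E_\bullet)$ from the preceding Lemma must be used to rule out jumping.
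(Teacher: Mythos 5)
Your overall route is the same as the paper's: extract a single $T\in\ST_d$ and an automorphism $\psi=J_T^{-1}\circ\Phi\circ J_{\id}$ of $J(X)$ from one generic Jacobian orbit via Lemma \ref{lemma:JacobianOrbitsFixed}, identify $\psi$ with the $\rho$ defining $\SA_\rho^\xi$, and then propagate from the orbit to a dense open subset using the fiberwise description of Lemma \ref{lemma:basicTransFibres}. However, there is a genuine gap at the step where you must show $\psi\in\Aut(J(X),J(X)[r])$. Your determinant comparison only yields $\varphi_0(L^r)=\psi(L)^r$, i.e.\ $r\varphi_0=r\psi$, which identifies $\psi$ with $\varphi_0$ but says nothing about the restriction of $\psi$ to $J(X)[r]$ --- precisely because $L\mapsto L^r$ kills the $r$-torsion, the relation is blind to it. The phrase ``upon reducing mod $r$-torsion pins $\psi\equiv\id$ on $J(X)[r]$'' is not an argument; reducing modulo the $r$-torsion discards exactly the information you need. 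The paper's actual mechanism is different and essential: for $S\in J(X)[r]$ the points $(E,E_\bullet)\otimes L$ and $(E,E_\bullet)\otimes L\otimes S$ lie in the \emph{same} fiber of $\det$ (since $(L\otimes S)^r=L^r$), so by Lemma \ref{lemma:basicTransFibres} the same basic transformation $T_{\xi\otimes L^r}$ governs $\Phi$ at both points. Taking $S=\SO_X$ and using that the generic $(E,E_\bullet)$ is fixed by no nontrivial basic transformation (Lemma \ref{lemma:basicFaithful}) identifies $T_{\xi\otimes L^r}=\ST_{(\rho-\id)(L)}$; applying that same transformation at $(E,E_\bullet)\otimes L\otimes S$ and comparing with the orbit description $(E,E_\bullet)\otimes\rho(L)\otimes\rho(S)$ forces $\rho(S)=S$. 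Without some version of this comparison your $\rho$ need not fix $J(X)[r]$ and $\wt{\rho}$, hence $\SA_\rho^\xi$, is not even defined.

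Your final gluing step also invokes the wrong tools. You propose to rule out ``jumping'' of the fiberwise transformations $T_{\xi'}$ using the finiteness of $\ST_\xi$ and the disjointness of the finitely many orbits, but neither is how the consistency is actually obtained, and it is not clear they suffice. The paper's resolution is more direct: the single-orbit computation produces the explicit identity $T_{\xi\otimes L^r}=\ST_{\wt{\rho}(\xi\otimes L^r\otimes\xi^{-1})}$ for all $L$ in a dense open subset of $J(X)$; since $L\mapsto\xi\otimes L^r$ sweeps out a dense open subset of $\Pic^d(X)$ and, by Lemma \ref{lemma:basicTransFibres}, $T_{\xi'}$ controls $\Phi$ on the \emph{entire} fiber $\SM(r,\alpha,\xi')$ wherever $\Phi$ is defined (not merely at the orbit point), the formula $\Phi(E',E'_\bullet)\cong T(\SA_\rho^\xi(E',E'_\bullet))$ follows immediately on $\det^{-1}$ of that dense set of determinants. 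In short: one generic point per fiber determines $T_{\xi'}$, and $T_{\xi'}$ determines $\Phi$ on the whole fiber. I recommend restructuring your last two paragraphs around this chain rather than around finiteness and disjointness of orbits.
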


\begin{proof}
By Lemma \ref{lemma:JacobianOrbitsFixed} and Lemma \ref{lemma:basicFaithful}, there exists $\xi\in \Pic^d(X)$ and a generic $(E,E_\bullet)\in \SM(r,\alpha,\xi)$ such that $(E,E_\bullet)$ is $\alpha'$-stable for any generic system of weights $\alpha'$, it is not fixed by nontrivial $T\in \ST$ if $r>2$ or $T\in \ST^+$ if $r=2$, and there exists open subsets $\tilde{\SP},\tilde{\SP}'\subset J(X)$ such that the composition
$$\tilde{\SP} \stackrel{J_{\id}}{\longrightarrow} \tilde{\SP}\cdot (E,E_\bullet) \stackrel{\Phi}{\longrightarrow} \tilde{\SP}' \cdot T(E,E_\bullet) \stackrel{J_T^{-1}}{\longrightarrow} \tilde{\SP}'$$
is an isomorphism. Composing with the $3$-birational map $T^{-1}:\SM(r,\alpha,d)\dashrightarrow \SM(r,\alpha,d)$ (which is well defined over $\SP'\cdot T(E,E_\bullet)$ by the choice of $(E,E_\bullet)$) and repeating the construction of the previous Lemma, we obtain that there is an open subset $\tilde{\SP}''\subset J(X)$ such that the composition
$$\tilde{\SP} \stackrel{J_{\id}}{\longrightarrow} \tilde{\SP}\cdot (E,E_\bullet) \stackrel{T^{-1}\circ \Phi}{\longrightarrow} \tilde{\SP}'' \cdot (E,E_\bullet) \stackrel{J_{\id}^{-1}}{\longrightarrow} \tilde{\SP}''$$
is an isomorphism, so it induces a birational map $J(X)\dashrightarrow J(X)$. As $J(X)$ is abelian, this map extends to a unique automorphism $\rho:J(X) \longrightarrow J(X)$ (c.f. \cite[Theorem 3.8]{Milne08}). Then for each $L\in \tilde{\SP}$ we have
$$(T^{-1}\circ\Phi)((E,E_\bullet) \otimes L) \cong (E,E_\bullet) \otimes \rho(L)$$

The map $\rho$ is the composition of a translation and a group homomorphism (c.f. \cite[Corollary 1.2]{Milne08}). Changing $T$ to $T\circ \ST_M$ for a suitable $M\in J(X)$, it is clear that we can choose $T$ such that $\rho(\SO_X)=\SO_X$. Thus, we can assume that $\rho$ is a group homomorphism. Let us verify that $\rho$ fixes the $r$-torsion of the Jacobian, $J(X)[r]$. Let us consider the following open subset of $J(X)$.
$$\SW=\bigcap_{S\in J(X)[r]} S\cdot \tilde{\SP} \subset \tilde{\SP}$$
As $J(X)[r]$ is finite, then $\SW$ is an open nonempty subset such that for each $L\in \SW$ and $S\in J(X)[r]$, $L\otimes S\in \SW$. Fix $L\in \SW$. Then for each $S\in J(X)[r]$,
$$(T^{-1}\circ\Phi)((E,E_\bullet)\otimes L \otimes S) \cong (E,E_\bullet)\otimes \rho(L\otimes S) = (E,E_\bullet)\otimes \rho(L)\otimes \rho(S)$$
On the other hand, by Lemma \ref{lemma:basicTransFibres} we know that for each $\xi'\in \det(\SP\cdot (E,E_\bullet))$ there exists a basic transformation $T_{\xi'}\in \ST_d$ (which can be taken in $\ST^+_d$ if $r=2$), such that for each $(E',E'_\bullet)\in \SM(r,\alpha,\xi')$ where $T^{-1}\circ\Phi$ is defined, $(T^{-1}\circ\Phi)(E',E'_\bullet)\cong T_{\xi'}(E',E'_\bullet)$. Thus, for each $S\in J(X)[r]$,
$$T_{\xi\otimes L^r}((E,E_\bullet)\otimes L \otimes S)\cong (T^{-1}\circ\Phi)((E,E_\bullet)\otimes L \otimes S) \cong (E,E_\bullet)\otimes \rho(L)\otimes \rho(S)$$
For $S=\SO_X$, we have that
$$T_{\xi\otimes L^r}((E,E_\bullet)\otimes L)\cong (E,E_\bullet)\otimes \rho(L) = (E,E_\bullet)\otimes L \otimes (\rho-\id)(L)$$
Since we took $(E,E_\bullet)$ such that it is not fixed by any nontrivial basic transformation, we conclude that for each $L\in \SW$, $T_{\xi\otimes L^r}=\ST_{(\rho-\id)(L)}$ (for $r=2$, observe that we can choose $T_{\xi\otimes L^r}\in \ST^+$ and $\ST_{(\rho-\id)(L)}\in \ST^+$). Thus, for any other $S\in J(X)[r]$,
$$(T^{-1}\circ\Phi)((E,E_\bullet)\otimes L \otimes S) \cong  (E,E_\bullet)\otimes \rho(L)\otimes \rho(S) = T_{\xi\otimes L^r}((E,E_\bullet)\otimes L \otimes S)= (E,E_\bullet) \otimes \rho(L) \otimes S$$
which implies that that $\rho(S)=S$ for each $S\in J(X)[r]$, so $\rho\in \Aut(J(X),J(X)[r])$. Following the argument from Section \ref{section:action}, we can write $\rho=\id+r\wt{\rho}$, and for each $L\in \SW$
\begin{multline*}
(T^{-1}\circ\Phi)((E,E_\bullet)\otimes L) \cong (E,E_\bullet) \otimes \rho(L) = (E,E_\bullet) \otimes L \otimes (\rho-\id)(L) = (E,E_\bullet) \otimes L\otimes \wt{\rho}(L^r)\\
=(E,E_\bullet) \otimes L \otimes \wt{\rho}(\det(E)\otimes L) \otimes \xi^{-1})= \SA_\rho^\xi((E,E_\bullet)\otimes L)
\end{multline*}
In particular, for each $L\in W$, we have
$$T_{\xi\otimes L^r}=\ST_{(\rho-\id)(L)} = \ST_{\wt{\rho}(L^r)} = \ST_{\wt{\rho}(\xi\otimes L^r \otimes \xi^{-1})}$$
This implies that for each $\xi'\in \det(\SW\cdot (E,E_\bullet))$ we have
$$T_{\xi'}=\ST_{\wt{\rho}(\xi'\otimes \xi^{-1})}$$
which implies that for each $(E',E'_\bullet)\in \SM(r,\alpha,\xi')$ for which $T^{-1}\circ\Phi$ is defined
$$(T^{-1}\circ\Phi)(E',E'_\bullet) \cong T_{\xi'}(E',E'_\bullet) = (E',E'_\bullet) \otimes \wt{\rho}(\xi'\otimes \xi^{-1}) = (E',E'_\bullet)\otimes \wt{\rho}(\det(E') \otimes \xi^{-1}) = \SA_{\rho}^\xi(E',E'_\bullet)$$
Taking $\SU\subset \SM(r,\alpha,d)$ as intersection of $\det^{-1}(\det(\SW\cdot (E,E_\bullet)))$ and the open subset where $T^{-1}\circ\Phi$ is defined, we obtain the desired result.
\end{proof}

\begin{theorem}
\label{thm:3birMaps}
Let $(X,D)$ and $(X',D')$ be two smooth complex projective curves of genus $g\ge 6$ and $g'\ge 6$ respectively with sets of marked points $D\subset X$ and $D'\subset X'$. Let $\alpha$ and $\alpha'$ be full flag generic systems of weights over $(X,D)$ and $(X',D')$ respectively. Let $\Phi: \SM(X,D,r,\alpha,d)\dashrightarrow \SM(X',D',r',\alpha',d')$ be a $3$-birational map. Then $r=r'$ and there exist
\begin{itemize}
\item an isomorphism $\sigma:(X,D)\longrightarrow (X',D')$,
\item a basic transformation $T\in \ST$, which belongs to $\ST^+$ if $r=2$,
\item a line bundle $\xi\in \Pic^d(X)$ and
\item an automorphism $\rho\in \Aut(J(X),J(X)[r])$
\end{itemize}
such that $T(d)=d'$ and for each $(E,E_\bullet)\in \SM(X,D,r,\alpha,d)$ where $\Phi$ is defined
$$\sigma^*\Phi(E,E_\bullet) \cong (T\circ \SA^\xi_\rho)(E,E_\bullet)$$
\end{theorem}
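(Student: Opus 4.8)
The plan is to use the Torelli-type results already established to reduce $\Phi$ to a $3$-birational \emph{self}-map of a single moduli space, apply Lemma~\ref{lemma:recoverAction} to it, and then unwind the reduction while keeping track of the basic transformations that appear.

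First I would apply the $3$-birational Torelli theorem (Theorem~\ref{thm:TorelliBir}) to $\Phi$, obtaining $r=r'$ together with an isomorphism $\sigma:X\to X'$ with $\sigma(D)=D'$. Pulling parabolic bundles back along $\sigma$ gives an isomorphism of moduli spaces $\sigma^{*}:\SM(X',D',r,\alpha',d')\stackrel{\sim}{\longrightarrow}\SM(X,D,r,\sigma^{*}\alpha',d')$, where $\sigma^{*}\alpha'$ is again a full flag generic system of weights on $(X,D)$; hence $\Psi:=\sigma^{*}\circ\Phi$ is a $3$-birational map $\SM(X,D,r,\alpha,d)\dashrightarrow\SM(X,D,r,\sigma^{*}\alpha',d')$ between moduli spaces over the same marked curve. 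Next I would invoke Lemma~\ref{lemma:reciprocalTorelli} with source weights $\sigma^{*}\alpha'$ and degree $d'$ and target weights $\alpha$ and degree $d$: it produces a basic transformation $S\in\ST^{+}$ with $S(d')=d$ (this is how $S$ is constructed in the proof of that lemma) inducing a $3$-birational equivalence $\SM(X,D,r,\sigma^{*}\alpha',d')\dashrightarrow\SM(X,D,r,\alpha,d)$. Since a composition of $3$-birational maps is again $3$-birational --- after intersecting the relevant open subsets the complements still have codimension at least $3$ because isomorphisms preserve codimension --- the composite $\Theta:=S\circ\Psi=S\circ\sigma^{*}\circ\Phi$ is a $3$-birational self-map of $\SM(X,D,r,\alpha,d)$.

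Now Lemma~\ref{lemma:recoverAction} applies to $\Theta$ (this is where the hypothesis $g\ge 6$ is used): there are a basic transformation $T_{1}\in\ST_{d}$, which may be taken in $\ST_{d}^{+}$ if $r=2$, a line bundle $\xi\in\Pic^{d}(X)$, an automorphism $\rho\in\Aut(J(X),J(X)[r])$, and an open subset $\SU$ where $\Theta$ is defined, with $\Theta(E,E_\bullet)\cong T_{1}(\SA_{\rho}^{\xi}(E,E_\bullet))$ for all $(E,E_\bullet)\in\SU$. Set $T:=S^{-1}\circ T_{1}$. Since $\ST^{+}$ is a subgroup of $\ST$ (Section~\ref{section:basicTransformations}) we have $S^{-1}\in\ST^{+}$, so $T\in\ST^{+}$ whenever $r=2$; moreover $T(d)=S^{-1}(T_{1}(d))=S^{-1}(d)=d'$, using $T_{1}\in\ST_{d}$ and $S(d')=d$. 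Composing the identity $\Theta\cong T_{1}\circ\SA_{\rho}^{\xi}$, valid on $\SU$, with $S^{-1}$ on the left yields $\sigma^{*}\circ\Phi\cong T\circ\SA_{\rho}^{\xi}$ on the dense open subset $\SU$; since $\SA_{\rho}^{\xi}$ is a global automorphism of $\SM(X,D,r,\alpha,d)$ (Lemma~\ref{lemma:autoJacobianAction}), both $\sigma^{*}\circ\Phi$ and $T\circ\SA_{\rho}^{\xi}$ are morphisms on open subsets of the moduli space, and the moduli space is separated, so two such morphisms agreeing on a dense open subset agree on the common domain of definition; this propagates the identity $\sigma^{*}\Phi(E,E_\bullet)\cong(T\circ\SA_{\rho}^{\xi})(E,E_\bullet)$ to the locus where $\Phi$ is defined.

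Essentially all the genuine content lies in Lemma~\ref{lemma:recoverAction} and the orbit analysis (Lemmas~\ref{lemma:JacobianOrbit} and~\ref{lemma:JacobianOrbitsFixed}) it rests on; the argument above is bookkeeping. The steps that need care are: checking that composing $3$-birational maps preserves the "codimension at least $3$" condition on the complements, so that Lemma~\ref{lemma:recoverAction} is legitimately applicable to $\Theta$ rather than merely on generic fibres of the determinant; verifying that $\ST^{+}$ is closed under inversion, which is needed for the $r=2$ case; and the final separatedness argument extending the pointwise identity off $\SU$. I expect the codimension bookkeeping through the two reductions --- pullback by $\sigma$ and the reciprocal transformation $S$ --- to be the main, if routine, obstacle, since one has to ensure the estimates survive these operations globally and not only fibrewise over $\Pic^{d}(X)$.
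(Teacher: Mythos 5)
Your reduction is exactly the paper's: Torelli to get $r=r'$ and $\sigma$, pullback by $\sigma$ to put everything on $(X,D)$, Lemma \ref{lemma:reciprocalTorelli} to normalize the weights and degree, composition of $3$-birational maps to get a $3$-birational self-map, and then Lemma \ref{lemma:recoverAction}. The bookkeeping you worry about (closure of $\ST^+$ under inversion, $T(d)=d'$, codimension of complements under composition) is all fine and is handled the same way in the paper.

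The gap is in your last step. Separatedness only lets you propagate the identity $\sigma^*\Phi\cong T\circ\SA^\xi_\rho$ to the locus where \emph{both} sides are morphisms into the moduli space $\SM(r,\alpha,d)$. But $T\circ\SA^\xi_\rho$ lands in $\SM(r,\alpha,d)$ only on the locus $\SV$ of bundles that are simultaneously $\alpha$-stable and $T_1^{-1}(\alpha)$-stable (a basic transformation sends $\beta$-stable bundles to $T_1(\beta)$-stable ones, so stability can be lost); on $\SU\setminus\SV$, which is a priori nonempty of codimension $\ge 3$, $T\circ\SA^\xi_\rho$ is only a quasiparabolic bundle, not a point of the moduli space, and your "two morphisms into a separated scheme agreeing on a dense open subset" argument does not apply there. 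Since the theorem asserts the isomorphism at \emph{every} point where $\Phi$ is defined, this locus must be dealt with. The paper does so by passing to families: $\SM(r,\alpha,d)$ is a fine moduli space (\cite[Proposition 3.2]{BY99}), so $\Phi$ is represented by the family $\Phi^*(\SE,\SE_\bullet)$ over $\SU\times X$, while $(T\circ\SA^\xi_\rho)(\SE,\SE_\bullet)$ is a second family of quasiparabolic bundles over $\SU\times X$ agreeing with it over $(\SU\cap\SV)\times X$; uniqueness of extensions of such families across complements of codimension $\ge 3$ on a smooth base (\cite[Lemma 2.8]{AG19}) then forces the two families to be isomorphic over all of $\SU\times X$, which both establishes the identity on $\SU\setminus\SV$ and shows a posteriori that stability is not actually lost there. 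Your proof needs this (or an equivalent) argument to close.
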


\begin{proof}
By Torelli theorem \ref{thm:TorelliBir}, we know that $r=r'$ and there exists an isomorphism $\sigma:(X,D)\longrightarrow (X',D')$. Composing $\Phi$ with the pullback by $\sigma$, and substituting $\alpha'$ by $\sigma^*\alpha'$, we can assume without loss of generality that $(X,D)=(X',D')$. Moreover, by Lemma \ref{lemma:reciprocalTorelli}, there exists a basic transformation $T\in \ST^+$ inducing a $3$-birational map $\SM(X,D,r,\alpha,d)\dashrightarrow \SM(X,D,r,\alpha',d')$. Composing with $T^{-1}$, we can assume that $\alpha=\alpha'$ and $d=d'$.

Let $\SU,\SU'\subset \SM(X,D,r,\alpha,d)=\SM(r,\alpha,d)$ be the open subsets such that $\Phi:\SU \longrightarrow \SU'$ is an isomorphism. We can apply Lemma \ref{lemma:recoverAction} to find an open subset $\SU_1\subset \SU\subset \SM(r,\alpha,d)$, a basic transformation $T\in \ST_d$ (which belongs to $\ST_d^+$ if $r=2$), a line bundle $\xi\in \Pic^d(X)$ and $\rho\in \Aut(J(X),J(X)[r])$ such that for all $(E,E_\bullet)\in \SU_1$
$$\Phi(E,E_\bullet)\cong (T\circ \SA^\xi_\rho)(E,E_\bullet)$$
We know that $\Phi|_{\SU_1}=(T\circ\SA^\xi_\rho)|_{\SU_1}$. It is only left to prove that these two maps agree on $\SU$. Let $\SV\subset \SM(r,\alpha,d)$ be the subset parameterizing parabolic vector bundles which are both $\alpha$-stable and $T^{-1}(\alpha)$-stable. By \cite[Lemma 2.3]{AG19}, $\SV$ is an open dense subset whose complement has codimension at least $3$ in $\SM(r,\alpha,d)$. Observe that $\SA^\xi_\rho$ does not change the stability of the parabolic vector bundle. Thus, by the choice of $\SV$, $T\circ \SA^\xi_\rho$ extends to a map $T\circ \SA^\xi_\rho: \SV\longrightarrow \SM(r,\alpha,d)$. Then, $T\circ \SA^\xi_\rho|_{\SV\cap \SU}$ and $\Phi|_{\SV\cap \SU}$ are two possible maps $\SV\cap \SU \longrightarrow \SM(r,\alpha,d)$ whose restriction to the dense subset $\SU_1\subset \SV\cap \SU$ coincide. As $\SM(r,\alpha,d)$ is separable, we conclude that $T\circ \SA^\xi_\rho|_{\SV\cap \SU}=\Phi|_{\SV\cap \SU}$.

On the other hand, by \cite[Proposition 3.2]{BY99}, $\SM(r,\alpha,d)$ is a fine moduli space. Let $(\SE,\SE_\bullet)\longrightarrow \SM(r,\alpha,d) \times X$ be a universal family. Then the map $\Phi:\SU\longrightarrow \SM(r,\alpha,d)$ must be represented by a family $(\SE',\SE_\bullet')=\Phi^*(\SE,\SE_\bullet) \longrightarrow \SU\times X$ of $\alpha$-stable quasi-parabolic vector bundles. Tensoring with a line bundle on $\SV\cap \SU$ if necessary, we can assume that
$$(\SE',\SE_\bullet')|_{(\SV\cap \SU) \times X} \cong (T\circ \SA^\xi_\rho)(\SE,\SE_\bullet)|_{(\SV\cap \SU)\times X}$$
As $T\circ \SA^\xi_\rho$ is well defined on arbitrary families of quasiparabolic vector bundles, $(T\circ \SA^\xi_\rho)(\SE,\SE_\bullet)|_{\SU \times X}$ is a well defined family of quasi-parabolic vector bundles over $\SU$. Thus, $(\SE',\SE'_\bullet)$ and $(T\circ \SA^\xi_\rho)(\SE,\SE_\bullet)$ are two different families of quasi-parabolic bundles on $\SU\times X$ which are isomorphic over $(\SU\cap \SV)\times X$.

However, $\SU$ is a smooth complex scheme and $\SU\cap \SV$ is an open subset whose complement has codimension at least $3$, so by \cite[Lemma 2.8]{AG19}, there is at most one extension of $(\SE',\SE'_\bullet)|_{(\SU\cap \SV)\times X}$ to $\SV\times X$, and we conclude that $(\SE',\SE'_\bullet) \cong (T\circ \SA^\xi_\rho)(\SE,\SE_\bullet)$. Thus, $\Phi(E,E_\bullet)\cong T(\SA_\rho^\xi(E,E_\bullet))$ for all $(E,E_\bullet)\in \SU$.
\end{proof}

\begin{corollary}
\label{cor:maps}
Let $(X,D)$ and $(X',D')$ be two smooth complex projective curves of genus $g\ge 6$ and $g'\ge 6$ respectively with sets of marked points $D\subset X$ and $D'\subset X'$. Let $\alpha$ and $\alpha'$ be full flag generic systems of weights over $(X,D)$ and $(X',D')$ respectively. Let $\Phi: \SM(X,D,r,\alpha,d)\longrightarrow \SM(X',D',r',\alpha',d')$ be an isomorphism. Then $r=r'$ and there exist
\begin{itemize}
\item an isomorphism $\sigma:(X,D)\longrightarrow (X',D')$,
\item a basic transformation $T\in \ST$, which belongs to $\ST^+$ if $r=2$,
\item a line bundle $\xi\in \Pic^d(X)$ and
\item an automorphism $\rho\in \Aut(J(X),J(X)[r])$
\end{itemize}
such that $T(d)=d'$, $T(\alpha)$ and $\sigma^*\alpha'$ belong to the same stability chamber and for each $(E,E_\bullet)\in \SM(X,D,r,\alpha,d)$
$$\sigma^*\Phi(E,E_\bullet) \cong (T\circ \SA^\xi_\rho)(E,E_\bullet)$$
\end{corollary}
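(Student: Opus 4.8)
The plan is to feed the isomorphism $\Phi$ into the $3$-birational classification and then exploit that $\Phi$ is a morphism \emph{everywhere}, not just outside a codimension-$\ge 3$ locus, in order to promote the $3$-birational conclusion to a global formula and to extract the stability-chamber statement. First I would note that an isomorphism is in particular a $3$-birational map, so Theorem \ref{thm:3birMaps} already yields $r=r'$, an isomorphism $\sigma:(X,D)\to(X',D')$, a basic transformation $T\in\ST$ (with $T\in\ST^+$ when $r=2$) satisfying $T(d)=d'$, a line bundle $\xi\in\Pic^d(X)$ and $\rho\in\Aut(J(X),J(X)[r])$ such that $\sigma^*\Phi(E,E_\bullet)\cong (T\circ\SA^\xi_\rho)(E,E_\bullet)$ for every $(E,E_\bullet)$ in some dense open $\SU_0\subset\SM(X,D,r,\alpha,d)$ whose complement has codimension at least $3$. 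Writing $\beta:=\sigma^*\alpha'$ (a generic full flag system of weights on $(X,D)$) and $\Psi:=\sigma^*\circ\Phi:\SM(X,D,r,\alpha,d)\to\SM(X,D,r,\beta,d')$, which is an isomorphism of varieties coinciding with $T\circ\SA^\xi_\rho$ over $\SU_0$, two things remain: to extend this identity over the whole moduli space, and to show that $T(\alpha)$ and $\beta$ determine the same stable locus.

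For the extension I would rerun the fine-moduli argument from the proof of Theorem \ref{thm:3birMaps}, now over the entire moduli space rather than over a codimension-$\ge 3$ open subset. Let $(\SE,\SE_\bullet)$ and $(\SF,\SF_\bullet)$ be universal families on $\SM(X,D,r,\alpha,d)\times X$ and $\SM(X,D,r,\beta,d')\times X$ respectively, both fine moduli spaces by \cite[Proposition 3.2]{BY99}. Because basic transformations, as well as the operation $\SA^\xi_\rho$, act on arbitrary families compatibly with restriction to fibres, $(T\circ\SA^\xi_\rho)(\SE,\SE_\bullet)$ and $\Psi^*(\SF,\SF_\bullet)$ are two families of quasi-parabolic bundles on $\SM(X,D,r,\alpha,d)\times X$ that are fibrewise isomorphic over $\SU_0\times X$ (this is exactly the formula on $\SU_0$). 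By the simplicity, hence rigidity, of stable parabolic bundles they therefore differ over $\SU_0\times X$ by the pullback of a line bundle on $\SU_0$, which extends over the smooth variety $\SM(X,D,r,\alpha,d)$ since $\SU_0$ has complement of codimension at least $2$; absorbing it and invoking the uniqueness of extension of families across a codimension-$\ge 3$ locus on a smooth scheme \cite[Lemma 2.8]{AG19}, the two families agree on all of $\SM(X,D,r,\alpha,d)\times X$ up to a line bundle pulled back from the base. Restricting to a fibre $\{(E,E_\bullet)\}\times X$ the twist becomes trivial, so $\sigma^*\Phi(E,E_\bullet)\cong (T\circ\SA^\xi_\rho)(E,E_\bullet)$ for \emph{every} $(E,E_\bullet)\in\SM(X,D,r,\alpha,d)$, which is the asserted formula.

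Finally, the chamber statement is formal. The automorphism $\SA^\xi_\rho$ of $\SM(X,D,r,\alpha,d)$ (Lemma \ref{lemma:autoJacobianAction}) composed with the isomorphism $\SM(X,D,r,\alpha,d)\to\SM(X,D,r,T(\alpha),d')$ induced by the basic transformation $T$ gives a bijection from $\SM(X,D,r,\alpha,d)$ onto the set of $T(\alpha)$-stable quasi-parabolic bundles of degree $d'$, while $\Psi$ is a bijection from $\SM(X,D,r,\alpha,d)$ onto the set of $\beta$-stable ones; by the global formula just proved these two bijections have the same image. Hence a quasi-parabolic bundle of rank $r$ and degree $d'$ is $T(\alpha)$-stable if and only if it is $\beta$-stable, so $T(\alpha)$ and $\sigma^*\alpha'$ lie in the same stability chamber, completing the proof. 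The main obstacle, and the only place where being an isomorphism rather than merely a $3$-birational map is genuinely used, is the passage in the second paragraph from the codimension-$\ge 3$ identity to the global one; the delicate point there is upgrading the fibrewise agreement of $(T\circ\SA^\xi_\rho)(\SE,\SE_\bullet)$ and $\Psi^*(\SF,\SF_\bullet)$ over $\SU_0\times X$ to an actual isomorphism of families (up to twist), which rests on the stable bundles parametrised being simple.
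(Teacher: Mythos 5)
Your proof is correct and follows essentially the same route as the paper: feed the isomorphism into Theorem \ref{thm:3birMaps} and then compare the images of the two bijections to conclude that the $T(\alpha)$-stable and $\sigma^*\alpha'$-stable loci coincide, hence the chamber statement. The only difference is that your second paragraph re-runs the universal-family/rigidity extension argument, which is redundant here because Theorem \ref{thm:3birMaps} already asserts the formula at every point where $\Phi$ is defined --- i.e.\ everywhere when $\Phi$ is an isomorphism --- though it does no harm.
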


\begin{proof}
As $\Phi$ is an isomorphism defined over the whole moduli space, Theorem \ref{thm:3birMaps} implies that $r=r'$ and there exists $\xi\in \Pic^d(X)$, $T\in \ST$ (which can be chosen in $\ST^+$ if $r=2$) such that $T(d)=d'$ and $\rho\in \Aut(J(X),J(X)[r])$ such that for each $(E,E_\bullet)\in \SM(X,D,r,\alpha,d)$
$$\sigma^*\Phi(E,E_\bullet) \cong (T\circ \SA^\xi_\rho)(E,E_\bullet)$$
As $\Phi$ is an isomorphism, when $(E,E_\bullet)$ varies in $\SM(X,D,r,\alpha,d)$, $\Phi(E,E_\bullet)$ takes all possible values in $\SM(X',D',r,\alpha',d')$, so $\sigma^*\Phi(E,E_\bullet)$ ranges over all possible isomorphism classes of $\sigma^*\alpha'$-stable parabolic vector bundles. On the other hand, as $\SA^\xi_\rho$ does not change the stability condition, $(T\circ \SA^\xi_\rho)(E,E_\bullet)$ is $T(\alpha)$-stable for each $\alpha$-stable $(E,E_\bullet)$. As $(E,E_\bullet)$ changes over $\SM(X,D,r,\alpha,d)$, $(T\circ \SA^\xi_\rho)(E,E_\bullet)$ covers all possible $T(\alpha)$-stable bundles. Thus, the sets of isomorphism classes of $\sigma^*\alpha'$-stable and $T(\alpha)$-stable parabolic vector bundles coincide, so $\sigma^*\alpha'$ and $T(\alpha)$ belong to the same stability chamber.
\end{proof}
Finally, let us analyze the group structure of the automorphisms. First of all, observe that for each $L\in J(X)$, $\xi\in \Pic^d(X)$ and $\rho\in \Aut(J(X),J(X)[r])$
\begin{multline*}
\SA_\rho^\xi((E,E_\bullet)\otimes L)=(E,E_\bullet)\otimes L \otimes \wt{\rho}(\det(E)\otimes L^r \otimes \xi^{-1})\\
=(E,E_\bullet)\otimes \wt{\rho}(\det(E)\otimes \xi^{-1})\otimes L \otimes \wt{\rho}(L^r)=\SA_\rho^\xi(E,E_\bullet)\otimes \rho(L),
\end{multline*}
so
$$\SA_\rho^\xi \circ \ST_L=\ST_{\rho(L)}\circ \SA_\rho^\xi.$$
Now, let $\xi\in \Pic^d(X)$, $\rho\in \Aut(J(X),J(X)[r])$ and $T=(\sigma,s,L,H)\in \ST_d$. Let $\rho_\sigma=\sigma^*\circ \rho \circ (\sigma^{-1})^*$. Then observe that for each $(E,E_\bullet)\in \SM(r,\alpha,\xi')\subset \SM(r,\alpha,d)$ we have
\begin{multline*}
T(\SA_\rho^\xi(E,E_\bullet))=T((E,E_\bullet)\otimes \wt{\rho}(\det(E)\otimes \xi^{-1}))= T(E,E_\bullet) \otimes \sigma^*(\wt{\rho}(\xi'\otimes \xi^{-1}))^s\\
=T(E,E_\bullet) \otimes \sigma^*(\wt{\rho}((\xi')^s \otimes \xi^{-s})) = T(E,E_\bullet) \otimes \wt{\rho_\sigma}(\sigma^*(\xi')^s \otimes \sigma^*\xi^{-s})\\
=T(E,E_\bullet) \otimes \wt{\rho_\sigma}(\sigma^*(\xi'\otimes L^r(-H))^s \otimes \sigma^*(\xi\otimes L^r(-H))^{-s})\\
=T(E,E_\bullet)\otimes \wt{\rho_\sigma}(T(\xi') \otimes T(\xi)^{-1})\\
= T(E,E_\bullet)\otimes \wt{\rho_\sigma}(\det(T(E,E_\bullet)) \otimes \xi^{-1}) \otimes \wt{\rho_\sigma}(\xi\otimes T(\xi)^{-1})\\
=\ST_{\wt{\rho_\sigma}(\xi\otimes T(\xi)^{-1})} \circ \SA_{\rho_\sigma}^\xi \circ T(E,E_\bullet) =\SA_{\rho_\sigma}^\xi \circ \ST_{\rho_\sigma^{-1}(\rho(\xi\otimes T(\xi^{-1})))}\circ T (E,E_\bullet).
\end{multline*}

This computation motivates the following definition.

\begin{definition}
An extended basic transformation of degree $d$ is a tuple $T=(\rho,\sigma,s,L,H)$ where $(\sigma,s,L,H)\in \ST_d$ is a basic transformation and $\rho\in \Aut(J(X),J(X)[r])$.
Fix once and for all a line bundle $\xi\in \Pic^d$. Let $\overline{\ST}_d$ be the group of extended basic transformations of degree $d$, generated by
\begin{itemize}
\item $\Sigma_\sigma=(\id,\sigma,1,\SO_X,0)$
\item $\SD^+=(\id,\id,1,\SO_X,0)=\id_\ST$
\item $\SD^-=(\id,\id,-1,\SO_X,0)$
\item $\ST_L=(\id,\id,1,L,0)$
\item $\SH_H=(\id,\id,1,\SO_X,H)$
\item $\SA_\rho^\xi=(\rho,\id,1,\SO_X,0)$
\end{itemize}
with the same relations as Lemma \ref{lemma:compositionRules} plus the following two relations.
For each $\rho,\rho'\in \Aut(J(X),J(X)[r])$
$$\SA_\rho^\xi \circ \SA_{\rho'}^\xi=\SA_{\rho\circ \rho'}^\xi$$
and for each $T=(\id,\sigma,s,L,H) \in \ST_d\subset \overline{\ST}_d$ and each $\rho\in \Aut(J(X),J(X)[r])$
$$T\circ \SA_\rho^\xi = \SA_{\rho_\sigma}^\xi \circ \ST_{\rho_\sigma^{-1}(\rho(\xi\otimes T(\xi^{-1})))}\circ T$$
Let $\overline{\ST}_d^+\subset \overline{\ST}_d$ be the subset of transformations where $s=1$ and $\overline{\ST}_{d,\alpha}$ the subset of transformations such that $(\sigma,s,L,H)\in \ST_{d,\alpha}$. Finally, let $\overline{\ST}_{d,\alpha}^+=\overline{\ST}_d^+\cap \overline{\ST}_{d,\alpha}$.
\end{definition} 

As all elements of $\ST_d$ give $3$-birational automorphisms of $\SM(r,\alpha,d)$ and $\SA_\rho^\xi$ acts by automorphisms of the moduli space by Lemma \ref{lemma:autoJacobianAction}, it is clear that all transformations $T\in \overline{\ST}_d$ induce $3$-birational automorphisms of $\SM(r,\alpha,d)$, which extend to automorphisms whenever $T\in \overline{\ST}_{d,\alpha}$ and we have the following theorem.
\begin{theorem}
\label{thm:automorphisms}
Let $(X,D)$ be a smooth complex projective curve of genus $g\ge 6$ and let $\alpha$ be a full flag system of weights over $(X,D)$. Then if $r>2$
$$\Aut_{3-\op{bir}}(\SM(r,\alpha,d))= \overline{\ST}_d, \quad \quad \Aut(\SM(r,\alpha,d))= \overline{\ST}_{d,\alpha}$$
and if $r=2$ then
$$\Aut_{3-\op{bir}}(\SM(2,\alpha,d))= \overline{\ST}_d^+,\quad \quad \Aut(\SM(2,\alpha,d))= \overline{\ST}_{d,\alpha}^+$$
\end{theorem}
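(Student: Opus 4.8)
The plan is to exhibit a group homomorphism $\Theta\colon\overline{\ST}_d\longrightarrow\Aut_{3-\op{bir}}(\SM(r,\alpha,d))$ sending each generator to the self‑map it induces, and to prove that $\Theta$ is bijective and restricts to a bijection $\overline{\ST}_{d,\alpha}\to\Aut(\SM(r,\alpha,d))$ (and likewise with the superscript $+$ when $r=2$). For well‑definedness I would argue as follows. The basic generators $\Sigma_\sigma,\SD^{\pm},\ST_L,\SH_H$ induce $3$‑birational self‑maps of $\SM(r,\alpha,d)$, obtained exactly as in the proof of Lemma \ref{lemma:reciprocalTorelli} by identifying, via \cite[Lemma 2.3]{AG19}, the common open locus of $\alpha$‑stable and $T(\alpha)$‑stable bundles, while $\SA_\rho^\xi$ induces an automorphism by Lemma \ref{lemma:autoJacobianAction}. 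All defining relations hold at the level of these maps: relations $(1)$--$(10)$ already as transformations of quasiparabolic bundles (Lemma \ref{lemma:compositionRules}), $\SA_\rho^\xi\circ\SA_{\rho'}^\xi=\SA_{\rho\circ\rho'}^\xi$ by Lemma \ref{lemma:autoJacobianAction}, and the mixed relation $T\circ\SA_\rho^\xi=\SA_{\rho_\sigma}^\xi\circ\ST_{(\cdots)}\circ T$ by the explicit computation preceding the definition of $\overline{\ST}_d$. If in addition $(\sigma,s,L,H)\in\ST_{d,\alpha}$, then $T(\alpha)$ lies in the chamber of $\alpha$, so $T(\alpha)$‑stability coincides with $\alpha$‑stability and $T$ is a genuine automorphism; combined with Lemma \ref{lemma:autoJacobianAction} this yields $\Theta(\overline{\ST}_{d,\alpha})\subseteq\Aut(\SM(r,\alpha,d))$.

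For surjectivity, given $\Phi\in\Aut_{3-\op{bir}}(\SM(r,\alpha,d))$, I would apply Theorem \ref{thm:3birMaps} with $(X',D',\alpha',d')=(X,D,\alpha,d)$ to get $\sigma\in\Aut(X,D)$, $T\in\ST_d$ (in $\ST_d^+$ if $r=2$), $\xi\in\Pic^d(X)$ and $\rho\in\Aut(J(X),J(X)[r])$ with $\sigma^*\Phi=T\circ\SA_\rho^\xi$. Since the pullback by $\sigma^{-1}$ is itself a basic transformation of degree $d$ with $s=1$, we may write $\Phi=T_1\circ\SA_\rho^\xi$ with $T_1=(\sigma^*)^{-1}\circ T$ a single basic transformation of degree $d$, so $\Phi\in\Theta(\overline{\ST}_d)$ (resp.\ $\Theta(\overline{\ST}_d^+)$). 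If $\Phi$ is a genuine automorphism, Corollary \ref{cor:maps} gives moreover that $T(\alpha)$ lies in the chamber of $\sigma^*\alpha$; applying to both sides the weight action of the basic transformation ``pullback by $\sigma^{-1}$'', which preserves the chamber decomposition and (being inverse to the weight action of the pullback by $\sigma$) sends $\sigma^*\alpha$ back to $\alpha$, one gets $T_1(\alpha)$ in the chamber of $\alpha$, i.e.\ $T_1\in\ST_{d,\alpha}$; as $\overline{\ST}_{d,\alpha}$ is a subgroup containing both $T_1$ and $\SA_\rho^\xi$, we conclude $\Phi\in\Theta(\overline{\ST}_{d,\alpha})$.

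For injectivity, I would first use the relations of $\overline{\ST}_d$ to move every $\SA$‑factor to the left, writing any $g\in\overline{\ST}_d$ as $g=\SA_\rho^\xi\circ T$ with $T\in\ST_d$ (in $\ST_d^+$ if $r=2$) and $\rho\in\Aut(J(X),J(X)[r])$, and assume $\Theta(g)=\id$. Passing to determinants, $\Theta(g)$ induces on $\Pic^d(X)$ the map $\eta\mapsto\rho\bigl(T(\eta)\otimes\xi^{-1}\bigr)\otimes\xi$, which is the identity; evaluating at $\eta=\xi$ and using that $\rho$ is injective gives $T(\xi)\cong\xi$, so $T\in\ST_\xi$. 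Since $\SA_\rho^\xi$ and its inverse restrict to the identity on $\SM(r,\alpha,\xi)$ (there $\det E\otimes\xi^{-1}=\SO_X$, and $\wt\rho(\SO_X)=\SO_X$), the basic transformation $T=(\SA_\rho^\xi)^{-1}\circ\Theta(g)$ is the identity $3$‑birational self‑map of $\SM(r,\alpha,\xi)$, and Lemma \ref{lemma:basicFaithful} forces $T=\id$. Then $\SA_\rho^\xi=\id$ on $\SM(r,\alpha,d)$, so a generic $(E,E_\bullet)$ satisfies $(E,E_\bullet)\otimes\wt\rho(\det E\otimes\xi^{-1})\cong(E,E_\bullet)$; by faithfulness of the $J(X)$‑action on generic bundles (Lemma \ref{lemma:JacobianOrbit}), $\wt\rho(\det E\otimes\xi^{-1})=\SO_X$ for $\det E$ ranging over a dense open subset of $\Pic^d(X)$, and a homomorphism of abelian varieties vanishing on a dense open set vanishes identically, so $\wt\rho=0$ and $\rho=\id+r\wt\rho=\id$. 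Hence $g=\SA_{\id}^\xi\circ\id=\id$ in $\overline{\ST}_d$. Injectivity on $\overline{\ST}_{d,\alpha}$ follows since $\Aut\subseteq\Aut_{3-\op{bir}}$, and the $r=2$ case is verbatim with $\overline{\ST}_d^+,\overline{\ST}_{d,\alpha}^+$. Combining the three parts gives all four equalities. The main obstacle I anticipate is exactly this injectivity step: separating the ``Jacobian part'' $\rho$ from the ``fibrewise part'' $T$ — notably the observation that $\SA_\rho^\xi$ is trivial along the distinguished fibre $\SM(r,\alpha,\xi)$, which is what reduces fibrewise triviality to Lemma \ref{lemma:basicFaithful} — together with the bookkeeping of stability chambers needed to pin down $\Aut(\SM(r,\alpha,d))$ inside $\Aut_{3-\op{bir}}(\SM(r,\alpha,d))$.
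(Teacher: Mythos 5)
Your proposal is correct and follows essentially the same route as the paper: surjectivity comes from Theorem \ref{thm:3birMaps} and Corollary \ref{cor:maps}, and injectivity from restricting to the fibre $\SM(r,\alpha,\xi)$, where $\SA_\rho^\xi$ acts trivially, so that Lemma \ref{lemma:basicFaithful} kills the basic-transformation factor and the induced action on $\Pic^d(X)$ (equivalently, your appeal to faithfulness of the $J(X)$-action on generic bundles) kills $\rho$. The one detail you elide is that the line bundle $\xi$ produced by Theorem \ref{thm:3birMaps} need not coincide with the one fixed in the definition of $\overline{\ST}_d$; the paper repairs this via Remark \ref{rmk:changexi}, which absorbs the discrepancy between $\SA_\rho^{\xi'}$ and $\SA_\rho^{\xi}$ into a tensorization $\ST_{\wt{\rho}(\xi\otimes (\xi')^{-1})}$, i.e.\ into the basic-transformation factor $T_1$.
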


\begin{proof}
Theorem \ref{thm:3birMaps} proves that for each $3$-birational automorphism $\Phi$ there exits $\xi'\in \Pic^d(X)$, $\rho\in \Aut(J(X),J(X)[r])$ and $T\in \ST_d$ (which can be taken in $\ST_d^+$ if $r=2$) such that $\Phi=\SA_\rho^{\xi'}\circ T$. By Remark \ref{rmk:changexi}, we can choose $T$ so that $\xi'=\xi$ is fixed for any automorphism. This, together with the previous computations and the definition of the group $\overline{\ST}_d$, clearly shows that $\overline{\ST}_d$ (respectively $\overline{\ST}_d^+$ if $r=2$) surjects as a group onto $\Aut_{3-\op{bir}}(\SM(r,\alpha,d))$. Similarly, Corollary \ref{cor:maps} proves that $\overline{\ST}_{d,\alpha}$ ($\overline{\ST}_{d,\alpha}^+$ if $r=2$) surjects onto $\Aut(\SM(r,\alpha,d))$.

Therefore, it is enough to show that there do not exist two different $T,T'\in \overline{\ST}_d$ (respectively, in $\overline{\ST}_d^+$ if $r=2$) which induce the same $3$-birational automorphism of $\SM(r,\alpha,d)$. Observe that this also implies that two different $T,T'\in \overline{\ST}_{d,\alpha}$ (or in $\overline{\ST}_{d,\alpha}^+$ if $r=2$) cannot induce the same automorphism of the moduli space. Composing with $(T')^{-1}$, this is equivalent to proving that there does not exist a nontrivial $T\in \overline{\ST}_d$ (respectively, $\overline{\ST}_d^+$) which acts as the identity on $\SM(r,\alpha,d)$.

Assume the contrary. Let $\SA_\rho^\xi \circ T$ be a transformation with $T\in \ST_d$ such that for each $(E,E_\bullet)\in \SM(r,\alpha,d)$ where $T$ is defined, $\SA_\rho^\xi\circ T(E,E_\bullet)\cong (E,E_\bullet)$ and such that $T\in \ST_d^+$ if $r=2$.  As $\SA_\rho^\xi$ fixes $\SM(r,\alpha,\xi)$, then $T(E,E_\bullet)\cong (E,E_\bullet)$ for each $(E,E_\bullet)\in \SM(r,\alpha,\xi)$ where $T$ is defined. By Lemma \ref{lemma:basicFaithful}, this implies $T=\id$. Thus, $\SA_\rho^\xi$ must act as the identity on $\SM(r,\alpha,d)$. A direct computation taking determinants (see the proof of Lemma \ref{lemma:autoJacobianAction}) shows that the induced map by $\SA_\rho^\xi$ on $\Pic^d(X)$ is given by
$$\SA_\rho^\xi(\xi')=\rho(\xi'\otimes \xi^{-1})\otimes \xi$$
for each $\xi'\in \Pic^d(X)$. If $\SA_\rho^\xi$ is the identity, this map must be the identity too, but then 
$$\xi'\cong \SA_\rho^\xi(\xi') =\rho(\xi'\otimes \xi^{-1})\otimes \xi$$
so for all $\xi'\in \Pic^d(X)$
$$\xi'\otimes \xi^{-1}\cong \rho(\xi'\otimes \xi^{-1})$$
As $\xi'\otimes \xi^{-1}$ runs over all elements of the Jacobian $J(X)$, this implies $\rho=\id$.
\end{proof}

\begin{remark}
For $r=2$, observe that the duality map $\op{inv}(L)=L^{-1}\in \Aut(J(X))$ fixes the $2$-torsion $J(X)[2]$, so $\op{inv}\in \Aut(J(X),J(X)[r])$. We have $\wt{\op{inv}}=\op{inv}$ and, thus, for each $(E,E_\bullet)$
$$\SA_{\op{inv}}^\xi (E,E_\bullet)= (E,E_\bullet) \otimes \det(E)^{-1} \otimes \xi$$
As $(E,E_\bullet)\otimes \det(E)^{-1} \cong (E,E_\bullet)^\vee$ for all quasiparabolic vector bundles, we obtain that for $r=2$
$$\SA_{\op{inv}}^\xi (E,E_\bullet)  \cong \ST_\xi \circ \SD^- (E,E_\bullet)$$
This explains the fact that we can always write $3$-birational automorphisms of $\SM(2,\alpha,d)$ as extended basic transformations which do not involve the duality operator $\SD^-$.
\end{remark}

\printbibliography

\end{document}